\documentclass[oneside]{amsart}
\usepackage{enumerate,amssymb}
\usepackage{mathrsfs}
\usepackage{mathtools}
\usepackage{tikz}
\newtheorem{thm}{Theorem}[section]
\newtheorem{coro}[thm]{Corollary}
\newtheorem{prop}[thm]{Proposition}

\newtheorem{claim}{Claim}

\newtheorem{lem}[thm]{Lemma}
\theoremstyle{definition}

\newtheorem{rems}[thm]{Remarks}
\newtheorem{question}[thm]{Question}

\theoremstyle{definition}
\newtheorem{defn}[thm]{Definition}
\newtheorem{ex}[thm]{Example}
\newtheorem{exs}[thm]{Examples}
\newcommand{\Rset}{\mathbb{R}}
\newcommand{\Nset}{\omega}
\newcommand{\Pset}{\omega^\omega}

\newcommand{\Cset}{2^\omega}
\newcommand{\cset}{2^{<\omega}}
\newcommand{\pset}{\omega^{<\omega}}
\newcommand{\nset}{\Nset_+}
\newcommand{\abs}[1]{\lvert#1\rvert}

\newcommand{\seq}[1]{\langle#1\rangle}
\newcommand{\hm}{\mathscr H}
\newcommand{\uhm}{\overline{\mathscr H}}
\newcommand{\pack}{\mathscr P}
\newcommand{\micr}{\boldsymbol{\varsigma}}
\newcommand{\eps}{\varepsilon}
\newcommand{\del}{\delta}
\newcommand{\subs}{\subseteq}
\renewcommand{\leq}{\leqslant}
\renewcommand{\geq}{\geqslant}

\DeclareMathOperator{\hdim}{\dim_{\mathsf{H}}}
\DeclareMathOperator{\pdim}{\dim_{\mathsf{P}}}
\DeclareMathOperator{\lpdim}{\underline{dim}_{\mathsf{P}}}
\DeclareMathOperator{\lbdim}{\underline{dim}_{\mathsf{B}}}
\DeclareMathOperator{\ubdim}{\overline{dim}_{\mathsf{B}}}
\DeclareMathOperator{\diam}{diam}

\DeclareMathOperator{\suc}{\mathsf{succ}}
\DeclareMathOperator{\ldens}{\underline{\mathsf{dens}}}
\DeclareMathOperator{\udens}{\overline{\mathsf{dens}}}
\newenvironment{enum}{\begin{enumerate}[\rm(i)]}{\end{enumerate}}
  {\begin{list}{$\blacktriangleright$}{\labelwidth1ex\setlength{\leftmargin}{1.5em}}}%
  {\end{list}}
  {\begin{list}{$\blacksquare$}{\labelwidth1ex\setlength{\leftmargin}{1.5em}}}%
  {\end{list}}
\newenvironment{itemyze}%
  {\begin{list}{\textbullet}{\labelwidth1ex\setlength{\leftmargin}{1.3em}}}%
  {\end{list}}
\newcommand{\si}{$\sigma$\nobreakdash-}

\newcommand{\AAA}{\mathscr{A}}
\newcommand{\II}{\mathscr{I}}
\newcommand{\JJ}{\mathscr{J}}

\newcommand{\GG}{\mathscr{G}}
\newcommand{\EE}{\mathscr{E}}
\newcommand{\MM}{\mathscr{M}}
\newcommand{\DD}{\mathscr{D}}
\newcommand{\DDs}{\clos{\mathscr{D}}}
\newcommand{\DDsh}{\mathscr{D}^\sharp}
\newcommand{\NN}{\mathscr{N}}

\newcommand{\clos}[1]{\overline{#1}}
\newcommand{\closs}{\clos s}
\newcommand{\el}[1]{\ell^{#1}}
\newcommand{\mult}[1]{{\mkern-2mu\times\mkern-2mu #1}}
\newcommand{\shift}[1]{{\mkern-2mu+\mkern-2mu #1}}
\newcommand{\concat}{\mkern-1mu^\smallfrown\mkern-4mu}

\newcommand{\wh}{\widehat}
\newcommand{\whs}{\widehat s}
\newcommand{\SEQ}{\mathbb{S}}
\newcommand{\Cube}{\mathbb{C}}
\newcommand{\cube}{\mathsf{C}}
\newcommand{\emany}{\exists^\infty}
\newcommand{\fmany}{\forall^\infty}
\newcommand{\rest}{{\restriction}}
\newcommand{\comp}{{\circ}}
\renewcommand{\concat}{^{\mkern-1mu\smallfrown}\mkern-3mu}

\newcommand{\klass}{\boldsymbol{\mathcal K}}

\newcommand{\mc}{\mathcal}

\newcommand{\harm}{\mathfrak{h}}
\newcommand{\geom}{\mathfrak{g}}
\newcommand{\cont}{\mathfrak{c}}

\newcommand{\para}{{\parallel}}
\newcommand{\parti}{\mathbb{P}}
\newcommand{\cyl}[1]{\langle#1\rangle}

\newcommand{\group}{\mathbb{G}}

\newcommand{\smz}{\ensuremath{\boldsymbol{\mathsf{Smz}}}}
\newcommand{\ssmz}{${\boldsymbol{\mathsf{Smz}}^\sharp}$}

\DeclareMathOperator{\non}{\mathsf{non}}
\DeclareMathOperator{\add}{\mathsf{add}}
\DeclareMathOperator{\cov}{\mathsf{cov}}
\DeclareMathOperator{\covs}{\mathsf{cov*}\mkern-3mu}
\DeclareMathOperator{\cof}{\mathsf{cof}}

\DeclareMathOperator{\micro}{\mathsf{micro}}

\newenvironment{cproof}{\noindent$\boldsymbol\vdash$}{\hfill\qed\par}

\begin{document}
\title
[More on dominated and microscopic sets]
{More on dominated and microscopic sets}

\author{Ond\v rej Zindulka}
\address
{Ond\v rej Zindulka\\
Department of Mathematics\\
Faculty of Civil Engineering\\
Czech Technical University\\
Th\'akurova 7\\
160 00 Prague 6\\
Czech Republic}
\email{ondrej.zindulka@cvut.cz}
\urladdr{http://mat.fsv.cvut.cz/zindulka}
\author{Piotr Nowakowski}
\address{Piotr Nowakowski
\\Faculty of Mathematics and Computer Science
\\University of Lodz
\\Banacha 22,
90-238 \L\'{o}d\'{z}
\\Poland\\
 ORCID: 0000-0002-3655-4991}
\email{piotr.nowakowski@wmii.uni.lodz.pl}
\author{Cristina Villanueva-Segovia}
\address
{Cristina Villanueva-Segovia\\
Universidad Nacional Autónoma de México\\
Instituto de Matemáticas, Unidad Cuernavaca\\
Av. Universidad s/n. Col. Lomas de Chamilpa\\
62210, Cuernavaca, Morelos\\
México 
}
\subjclass[2020]{11B05, 26B35, 28A78, 26A16}
\keywords{dominated set, microscopic set, Hausdorff measure, Kwela set, porous sets, cardinal invariants}
\thanks{
The first author gratefully acknowledges support received from a DGAPA-PAPIIT grant IN107526
and a SECIHTI grant CBF-2025-I-898.
The third named author acknowledges the support of the 
\textit{Secretaría de Ciencia, Humanidades, Tecnología e Innovación} (\textsc{secihti}) 
through the posdoctoral fellowship granted (Grant No. [I1200/111/2024]) .
}

\begin{abstract}
Let $S$ be a family of sequences decreasing to zero.
A set $E$ in a metric space is \emph{$S$-dominated} if, for every $s\in S$, 
there exists a countable cov\-er $\{E_n\}$ of $E$ such that $\diam E_n<s_n$ for every $n$.
We continue the study of families of dominated sets initiated in \cite{micro1}. 
We look, e.g., into Cartesian products of dominated sets with strong measure
and interaction of microscopic and porous sets.
Based upon a tight relationship of Hausdorff measures and dominated sets, cardinal invariants 
of the ideals of dominated sets are determined and a related problem of Kwela~\cite{MR3482702}
is resolved.
\end{abstract}

\maketitle

\section{Introduction}
The subject of the present paper --- dominated sets --- arises from the notion of microscopic sets 
that was introduced by Appell in~\cite{MR1912017,MR2152488}, and Appell, D'Aniello and 
V\"ath~\cite{MR1909968,MR2290215}. 
By their definition,
a set $M\subs\Rset$ is \emph{microscopic} if for every $\eps>0$ there is
a cover $\seq{E_1,E_2,E_3,\dots}$ of $E$ such that $\diam E_n<\eps^n$ for all $n$.

Microscopic sets form a \si ideal contained in the ideal of Hausdorff dimension zero.
In particular, every microscopic set has Lebesgue measure zero. On the other hand,
each set of strong measure zero is microscopic, so microscopic sets provide an example of a nontrivial,
new \si ideal on $\Rset$.

Since the mentioned pioneering papers, many publications addressed microscopic sets; 
\cite{micro1} provides their list and also a brief account of the history of the notion.

Building upon ideas of Horbaczewska~\cite{MR3759529} and Karasi\'nska, Paszkiewicz and 
Wagner Bojakowska~\cite{MR3685162}, \cite{micro1} develops the theory of \emph{dominated sets} that extends 
that of microscopic sets: given a family $S$ of sequences of positive reals decreasing to zero,
a set $E\subs X$ in a metric space $X$ is called \emph{$S$-dominated}
if for each $s\in S$ there is a countable cover $\{E_n\}$ of $E$ such that $\diam E_n<s_n$ for all $n$. 
%
Basic properties of dominated sets, structural properties of the set $S$ that make the family of 
$S$-dominated sets an ideal, and the tight relationship of dominated sets and Hausdorff measures
are examined and established.

The present paper is a follow-up to~\cite{micro1}. 
It focuses on several topics: the behavior of Cartesian products of dominated sets,
algebraic sums of microscopic and porous sets, properties and impact of certain pathological microscopic 
and dominated sets first described by Kwela in~\cite{MR3482702}, and cardinal invariants of 
the ideals of $S$-dominated sets.

Section~\ref{sec:back} recalls the notions of dominated and microscopic sets  and the background material,
as established in \cite{micro1}. 

Section~\ref{sec:products} looks at Cartesian products. 
It is proven that the product of two dominated sets does not have to be dominated, 
however, the product of a dominated and sharply dominated set 
(a slightly stronger notion, cf.~Section~\ref{sec:sharp})
is dominated. We also show that a product of a dominated set and a strong measure 
zero set is consistently dominated and consistently fails to be dominated.
Similar results hold for the algebraic sums of (sharply) dominated 
sets in Polish groups equipped with invariant metrics. Perhaps the most unexpected result, 
Example~\ref{prodex}(ii), claims that there are dominated sets such that their product with \emph{any}
uncountable set is not dominated.

By the celebrated Galvin-Mycielski-Solovay Theorem~\cite{GMS}, a set $X\subs\Rset$ has strong measure 
zero if and only if the algebraic sum of $X$ with any meager set does not cover $\Rset$.
Strong measure zero sets are exactly sets dominated by all sequences, while microscopic sets are dominated
by geometric sequences. So maybe there is an analogue of Galvin-Mycielski-Solovay Theorem for 
microscopic sets. 
This is subject to Section~\ref{sec:porous}. Indeed, it turns out that replacing meager sets with porous sets
yields the following result: If $M\subs\Rset$ is microscopic and $P\subs\Rset$ is \si porous, then
$M+P\neq\Rset$. Since the proof exhibits that porous sets exactly and tightly couple
microscopic sets, one wonders if the converse implication holds. We show that it does not:
there are many counterexamples exhibiting the failure of all conceivable converses.

In~\cite{MR3482702}, Kwela proved that the additivity of the ideal of microscopic sets on $\Rset$ 
is $\omega_1$. His intricate proof involves a construction of a set with interesting and powerful properties
exceeding remarkably Kwela's goal. In Section~\ref{sec:kwela1} we extract and enhance the combinatorics of his
construction and extend it far beyond microscopic sets on $\Rset$.
In a little more detail, 
Kwela constructed a particular microscopic set $E\subs\Rset$ of reals
and proved that there are sets $S$ of cardinality $\omega_1$ such that $E+S$ is not microscopic,
thus proving that the additivity of microscopic sets is $\omega_1$.
As a starting point of Section~\ref{sec:kwela1}, we isolate the combinatorial condition that gives 
his set this feature and generalize it to dominated sets and general metric spaces, thus introducing, 
for each sequence $s$ decreasing to zero, the notion of $s$-Kwela set, cf.~Definition~\ref{def:infra}. 
Then we prove the existence of many $s$-Kwela sets that are mutually far apart
in appropriate Cantor sets.

In Section~\ref{sec:kwela2} we start with proving, for each $s$, the existence of $s$-Kwela sets
in Banach spaces $c_0$, $\el1$ and $\el2$.  
Then we present a wide class of the so called \emph{onion spaces} based upon a strong notion of porosity
that is due to Vallin~\cite{MR1228396}. This class includes (but is not limited to)
all Banach spaces, Euclidean spaces and complete spaces that are not totally disconnected.
We prove that every such space contains $s$-Kwela sets for every $s$ that decreases 
at least as fast as a geometric sequence.

One has to ask if the latter condition can be dropped. 
It turns out that the answer is negative:
somewhat unexpectedly, doubling metric spaces admit no $s$-Kwela sets for arbitrarily small sequences.
However, the gap between the condition guaranteeing $s$-Kwela set and the one forbidding it is
rather wide and there is a room for improvement.

The last part of Section~\ref{sec:kwela2} proves the property that makes Kwela sets interesting: 
If $\mc K$ is an uncountable family of $s$-Kwela sets that are mutually far apart, 
then their union is not $\closs$-dominated. A few consequences are derived: a Cartesian product
of an $s$-Kwela set and an uncountable set is not $\closs$-dominated and
in many spaces, if there is an $s$-Kwela set, then the ideal of $\closs$-dominated sets and 
the ideal of Hausdorff measure zero sets induced by any gauge are never equal.

As another consequence, if $\DD(\closs)$ denotes the ideal of $\closs$-dominated sets
if there are $s$-Kwela sets, then the additivity of $\DD(\closs)$ is $\omega_1$,
the cofinality of $\DD(\closs)$ is $\cont$ and there are $\cont$ many disjoint sets 
that are not $\closs$-dominated. This is proved in Section~\ref{sec:invar} along
estimates of the covering and uniformity numbers of $\DD(\closs)$ that are based upon 
corresponding estimates of Hausdorff null ideals proved in~\cite{invariants2} 
and the tight relationship of dominated sets and Hausdorff measures established in~\cite{micro1}.
We also solve a problem~\cite[4.7]{MR3482702} regarding additivity of a certain ideal.

\section{Background}\label{sec:back}
We denote by $\Nset$ and $\nset$ the sets of all and positive natural numbers, respectively.
The cardinality of a set $A$ is denoted by $\abs A$.
The letter $d$ serves as a generic notation for a metric 
and $\diam E$ denotes the diameter of a set $E$ in a  metric space. 
We use also $d(x,E)$ and $d(E,F)$ to denote the distance of a point $x$ from
the set $E$ and the lower distance of the sets $E,F$, respectively. The closed ball of 
radius $r$ centered at $x$ is denoted by $B(x,r)$ and
likewise $B(F,r)$ denotes the set $\{x:d(x,F)\leq r\}$.

\subsection*{Dominated sets}
We recall the notion of dominated sets and related notions. 
\begin{defn}[\cite{micro1}]\label{def:dom}
Let $\SEQ\subs(0,\infty)^{\nset}$ denote the set of all strictly decreasing sequences
of positive reals converging to $0$.
\begin{itemyze}
\item Say that $S\subs\SEQ$ is \emph{countably determined}
if there is a countable $S'\subs S$ such that $\forall s\in S\ \exists s'\in S'$ $s'\leq s$.
\item For $s\in\SEQ$ and $k\in\nset$ let $s^{\shift k}=\seq{s_{n+k}:n\in\nset}$ and 
$s^{\mult k}=\seq{s_{kn}:n\in\nset}$. Write $s^{\mult{}}$ instead of $s^{\mult2}$ and
$s^{\shift{}}$ instead of $s^{\shift1}$.
\item A set $S\subs\SEQ$ is \emph{multiplicatively complete} if
$\forall s\in S\ \exists s'\in S\ s'\leq s^{\mult{}}$. Note that if $S$ is multiplicatively complete, then
$\forall s\in S\ \forall k\ \exists s'\in S\ s'\leq s^{\mult k}$.
\item For $S\subs\SEQ$ we define the \emph{multiplicative completion of $S$} to be
$\clos S=\{s^{\mult k}:s\in S, k\in\nset\}$. 
It is clear that $\clos S$ is multiplicatively complete and $S\subs\clos S$.
For $s\in\SEQ$ we write $\closs$ instead of $\clos{\{s\}}$.
Note that $\closs=\{s^{\mult k}:k\in\nset\}$ and that $\closs$ is countably determined.
\item \emph{Additively complete} and \emph{additive completion} are defined likewise. 
Additive completion of $S$ is denoted by $\wh S$.
\end{itemyze}
\end{defn}
\begin{defn}
Let $X$ be a separable metric space.
\begin{itemyze}
\item Let $s\in\SEQ$. A sequence $\seq{E_n:n\in\nset}$ of sets in $X$ is \emph{$s$-fine} if
$\diam E_n<s_n$ for all $n\in\nset$.
\item Let $S\subs\SEQ$. A set $E\subs X$ is \emph{$S$-dominated} if for each $s\in S$ there is
an $s$-fine cover of $E$.
\item The family of all $S$-dominated sets in $X$ is denoted by $\DD_X(S)$ or,
if there is no danger of confusion, just $\DD(S)$.
\item The family of all sets in $X$ that are contained in a $\sigma$-compact $S$-dominated set 
is denoted by $\DDs_X(S)$ or just $\DDs(S)$.
\end{itemyze}
\end{defn}
\begin{defn}
We single out the microscopic sets, since we are using them as a template.
Let $\geom=\seq{2^{-n}:n\in\nset}$ denotes the binary geometric sequence.
A set is \emph{microscopic} if it is $\clos\geom$-dominated. 
\end{defn}
Given $S\subset\SEQ$, the families $\DD(S)$ and $\DDs(S)$ need not form an ideal. 
However, as shown in~\cite{micro1}, 
if $S$ is multiplicatively complete, then $\DD(S)$ and $\DDs(S)$ are \si ideals, and
if $S$ is additively complete, then $\DDs(S)$ is a \si ideal.

The following lemma will be convenient. Its proof is staightforward.
For each $k\in\Nset$ let 
\begin{equation}\label{parti}
  \parti_k=\{2^k(2j+1):j\in\Nset\}
\end{equation}
consisting of odd multiples of $2^k$. The family $\{\parti_k:k\in\Nset\}$
is a partition of $\nset$.
\begin{lem}\label{basisDomi2}
A set $E$ in a metric space is $\closs$-dominated if and only if there is an $s$-fine sequence 
$\seq{E_n:n\in\nset}$ such that $E\subs\bigcap_{k\in\Nset}\bigcup\{E_n:n\in\parti_k\}$.
\end{lem}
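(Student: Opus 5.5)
Both directions reduce to the observation that, for $k\in\Nset$, the map $j\mapsto 2^k(2j+1)$ lists $\parti_k$ increasingly, and its $(j+1)$-st element is $2^k(2j+1)\geq 2^k(j+1)$ for $j\in\Nset$. Writing $m=j+1\in\nset$, the $m$-th element $p_k(m)$ of $\parti_k$ satisfies $2^k m\leq p_k(m)\leq 2^{k+1}m$. Since $s$ is decreasing, this gives
\[
s^{\mult{2^{k+1}}}_m\leq s_{p_k(m)}\leq s^{\mult{2^k}}_m .
\]
Throughout, $\closs=\{s^{\mult k}:k\in\nset\}$, and $E$ is $\closs$-dominated iff it has an $s^{\mult k}$-fine cover for each $k\in\nset$.

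\emph{Sufficiency.} Suppose $\seq{E_n}$ is $s$-fine and $E\subs\bigcap_k\bigcup_{n\in\parti_k}E_n$. Fix $k\in\nset$ and choose $i\in\Nset$ with $2^i\geq k$. Put $F_m=E_{p_i(m)}$. Then
\[
\diam F_m<s_{p_i(m)}\leq s_{2^i m}\leq s_{km}=s^{\mult k}_m ,
\]
and $\{F_m\}$ covers $E$ because $E\subs\bigcup_{n\in\parti_i}E_n$. So $E$ has an $s^{\mult k}$-fine cover for every $k$.

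\emph{Necessity.} Suppose that for each $k\in\Nset$ there is an $s^{\mult{2^{k+1}}}$-fine cover $\seq{F^k_m:m\in\nset}$ of $E$. Define $E_n$ by $E_{p_k(m)}=F^k_m$. This is well defined, and every $n$ is assigned exactly once, because the $\parti_k$ partition $\nset$. The sequence is $s$-fine:
\[
\diam E_{p_k(m)}<s^{\mult{2^{k+1}}}_m=s_{2^{k+1}m}\leq s_{p_k(m)},
\]
using $p_k(m)\leq 2^{k+1}m$ and monotonicity of $s$. Finally, $E\subs\bigcup_m F^k_m=\bigcup_{n\in\parti_k}E_n$ for each $k$.

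The only point needing care is the two-sided estimate on $p_k(m)$, which holds since $2^k(2m-1)\in[2^km,2^{k+1}m]$ for $m\geq1$. I do not expect any real obstacle; the argument is pure bookkeeping of indices.
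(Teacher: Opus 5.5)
Your proof is correct. The two-sided bound $2^k m\leq p_k(m)=2^k(2m-1)\leq 2^{k+1}m$, together with the monotonicity of $s$, handles both directions, and the $\parti_k$ partitioning $\nset$ makes the interleaving in the necessity direction well defined. The paper gives no proof, calling it straightforward, and your index bookkeeping is exactly the routine argument it leaves to the reader.
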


\subsection*{Hausdorff measures}
Dominated sets and sets of Hausdorff measure zero are profoundly related.
If $\del>0$, a cover $\mc A$ of a set $E\subs X$ is termed
\emph{$\del$-fine} if $\diam A<\del$ for all $A\in\mc A$.
Recall that if $\phi$ is a gauge,
the \emph{$\phi$-dimensional Hausdorff measure} $\hm^\phi(E)$ of
a set $E\subs X$ is defined as follows:
For each $\del>0$ set
\begin{alignat*}{2}
  &\hm^\phi_\delta(E)&&=
  \inf\left\{\sum\nolimits_n\phi(\diam E_n):
  \text{$\{E_n\}$ is a countable $\delta$-fine cover of $E$}\right\},\\
  &\hm^\phi(E)&&=\sup_{\delta>0}\hm^\phi_\delta(E).
\end{alignat*}
Properties of Hausdorff measures are well-known, see e.g. \cite{MR0281862}.
Hausdorff measures have a silent parameter: the underlying space $X$.
Sometimes (but rarely) we make it visible by writing $\hm^\phi_X$.
The \si ideal of sets of Hausdorff measure zero is written as 
$\NN(\hm^\phi)=\{E\subs X:\hm^\phi(E)=0\}$.

We refer to~\cite{micro1} for a detailed analysis of how the ideals $\DD(S)$ and
$\NN(\hm^\phi)$ are related. Here are two examples:
\begin{thm}[{\cite{micro1}}]\label{versus0}
Let $\phi$ be a gauge and $s\in\SEQ$. The following are equivalent.
\begin{enum}
\item $\phi\circ s\in\el1$,
\item for every metric space $X$, $\DD_X(\closs)\subs\NN(\hm_X^\phi)$,
\item $\DD_{\el2}(\closs)\subs\NN(\hm_{\el2}^\phi)$.
\end{enum}
\end{thm}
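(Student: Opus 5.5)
We prove the cycle (i)$\Rightarrow$(ii)$\Rightarrow$(iii)$\Rightarrow$(i). The middle implication is trivial, since $\el2$ is one particular metric space.

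\emph{(i)$\Rightarrow$(ii).} Assume $\sum_n\phi(s_n)<\infty$ and let $E\subs X$ be $\closs$-dominated. Fix $\del>0$ and $\eps>0$. For every $k$ the tail $\sum_{n}\phi(s_{kn})$ is at most $\frac1k\sum_{m\ge k}$ of blocks, but it is simpler to argue directly as follows. Choose $N$ with $\sum_{n>N}\phi(s_n)<\eps$ and $s_{N}<\del$. Apply Lemma~\ref{basisDomi2}: there is an $s$-fine sequence $\seq{E_n}$ with $E\subs\bigcup\{E_n:n\in\parti_k\}$ for every $k$. Pick $k$ with $2^k>N$. Every $n\in\parti_k$ satisfies $n\ge 2^k>N$, so $\diam E_n<s_n\le s_N<\del$. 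Hence $\{E_n:n\in\parti_k\}$ is a $\del$-fine cover of $E$, and
\[
\sum_{n\in\parti_k}\phi(\diam E_n)\le\sum_{n>N}\phi(s_n)<\eps,
\]
using that the gauge $\phi$ is nondecreasing. Thus $\hm^\phi_\del(E)=0$ for every $\del$, so $\hm^\phi(E)=0$.

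\emph{(iii)$\Rightarrow$(i).} Argue by contraposition: assume $\sum\phi(s_n)=\infty$ and construct a compact $\closs$-dominated set $E\subs\el2$ with $\hm^\phi(E)>0$. The intended set is a Cantor-type set built from an orthonormal basis $\{e_i\}$. Group $\nset$ into consecutive blocks $I_1<I_2<\dots$ and choose branching numbers $m_j$ with $m_1\cdots m_j$ roughly equal to $\abs{I_1\cup\dots\cup I_j}$. Let $E$ consist of the points $\sum_j r_j e_{i_j}$ with the radii $r_j\downarrow0$ and $e_{i_j}$ ranging over disjoint finite families of basis vectors. Distinct choices at level $j$ then differ by about $r_j\sqrt2$, and the cylinders at level $j$ have diameter about $\sqrt{\sum_{i>j}r_i^2}$.

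\begin{itemyze}
\item \textbf{Domination.} For each $k$, covering the level-$j$ cylinders by the sets indexed along $s^{\mult k}$ requires that the number of cylinders at level $j$ be at most the number of indices $n$ in the current block with $s_{kn}$ above the cylinder diameter. Choosing the blocks sparse enough in $j$ makes this hold eventually for every fixed $k$, and we are free to start from a late level.
\item \textbf{Positive measure.} Put the uniform probability measure $\mu$ on the cylinders. A set of diameter $r$ meets only boundedly many cylinders of the level whose separation is comparable to $r$; the orthogonality of $\el2$ is what gives this control. The mass distribution principle then requires $\mu(\text{cylinder})\lesssim\phi(\text{diameter})$, which amounts to $\phi(r_j)\,m_1\cdots m_j\gtrsim1$. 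Divergence of $\sum\phi(s_n)$ is what makes a suitable choice of $r_j$ possible, via a Cauchy-condensation-type block argument, since $\sum_{n\in I_j}\phi(s_n)$ can be made $\ge1$ on each block.
\end{itemyze}

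The main obstacle is balancing these two requirements simultaneously against all multiples $k$. I would first try the cleaner route of proving $\hm^\phi(E)>0$ for a set $E$ that is $s^{\mult k}$-dominated only for a sparse but cofinal choice of blocks, using the freedom to pass to subsequences.
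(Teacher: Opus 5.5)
Your (i)$\Rightarrow$(ii) is correct: every $n\in\parti_k$ is at least $2^k>N$, so the family $\{E_n:n\in\parti_k\}$ from Lemma~\ref{basisDomi2} is $\del$-fine with $\phi$-sum below $\eps$. (ii)$\Rightarrow$(iii) is trivial. The gap is in (iii)$\Rightarrow$(i), the only substantial direction. You leave the balancing against all $k$ open, and the balancing you do propose cannot work in general.

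For fixed $k$ you cover $E$ by all $M_j=m_1\cdots m_j$ cylinders of a single level $j$, of diameter $\rho_j$, and you require $M_j\phi(\rho_j)\gtrsim1$. For a homogeneous tree this requirement is also necessary, since those cylinders show $\hm^\phi(E)\le\liminf_j M_j\phi(\rho_j)$. Put $N(\rho)=\abs{\{n:s_n>\rho\}}$. An $s^{\mult k}$-fine cover by level-$j$ cylinders needs $M_j\le\abs{\{n:s_{kn}>\rho_j\}}\le N(\rho_j)/k$, so you would need $N(\rho_j)\phi(\rho_j)\gtrsim k$ for every $k$. Since $N(\rho)\phi(\rho)\le n\phi(s_n)$ for $\rho\in[s_{n+1},s_n)$, this forces $\sup_n n\phi(s_n)=\infty$. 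That does not follow from $\sum_n\phi(s_n)=\infty$: for $s=\geom$ and $\phi=\micr$ one has $n\phi(s_n)=1/\log 2$, which is exactly the microscopic case. Sparser blocks, or passing to cofinally many $k$, do not help, because the obstruction already occurs for each single large $k$. Divergence of block sums can only be exploited if a single cover uses many levels.

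That is the missing idea. A concrete way to carry it out:

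\begin{itemyze}
\item \emph{The set.} Choose scales $\rho_j=s_{P_j}$ with $\phi(\rho_j)\le\frac12\phi(\rho_{j-1})$, and integers $M_j$ with $M_{j-1}\mid M_j$ and $1\le M_j\phi(\rho_j)\le2$. Take the ultrametric tree with $M_j$ nodes of diameter at most $\rho_j$ at level $j$, and embed it isometrically into $\el2$ as in the proof of Theorem~\ref{kwela1}. If you keep your orthonormal model instead, the constant-factor slack between separations and diameters must be squeezed into one interval $[s_{n+1},s_n)$, since $\phi$ need not be doubling.
\item \emph{Positive measure.} The uniform measure and the mass distribution principle give $\hm^\phi(E)\ge1$.
\item \emph{Domination.} Given $k$, the indices $n$ with $\rho_j<s_{kn}\le\rho_{j-1}$ number at least $(N(\rho_j)-N(\rho_{j-1}))/k-1$. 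Assign them to that many still-uncovered level-$j$ cylinders, which carry mass at least $\frac12\bigl((N(\rho_j)-N(\rho_{j-1}))/k-1\bigr)\phi(\rho_j)$. So if $\sum_j(N(\rho_j)-N(\rho_{j-1}))\phi(\rho_j)=\infty$, the cylinders run out at some finite level and $E$ has an $s^{\mult k}$-fine cover.
\item \emph{Choice of scales.} This divergence is where $\sum_n\phi(s_n)=\infty$ genuinely enters, via a condensation argument. Let $D_i=\{n:2^{-i-1}<\phi(s_n)\le2^{-i}\}$; then $\sum_i\abs{D_i}2^{-i}=\infty$. For one parity of $i$, the nonempty $D_{i_1},D_{i_2},\dots$ still give a divergent sum. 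Taking $P_j=\max D_{i_j}$ yields both the halving and $(N(\rho_j)-N(\rho_{j-1}))\phi(\rho_j)\ge\abs{D_{i_j}}2^{-i_j-1}$.
\end{itemyze}
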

\begin{thm}[{\cite{micro1}}]\label{versus4}
If $\phi$ is a continuous gauge, then
$\NN(\hm^\phi)=\bigcup_{\phi\comp s\in\el1}\DD(\closs)$.
\end{thm}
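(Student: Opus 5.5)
The statement to prove is Theorem~\ref{versus4}: $\NN(\hm^\phi)=\bigcup_{\phi\comp s\in\el1}\DD(\closs)$ for a continuous gauge $\phi$. I will prove the two inclusions separately.

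The inclusion $\supseteq$ is immediate from Theorem~\ref{versus0}. If $\phi\comp s\in\el1$, then (i)$\Rightarrow$(ii) of that theorem gives $\DD_X(\closs)\subs\NN(\hm^\phi_X)$.

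For $\subseteq$, fix $E$ with $\hm^\phi(E)=0$. I will construct a single $s\in\SEQ$ with $\phi\comp s\in\el1$ and $E\in\DD(\closs)$, using the characterization in Lemma~\ref{basisDomi2}.

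\emph{Step 1: covers.} For each $k\in\Nset$, choose a countable cover $\{A^k_i:i\in\nset\}$ of $E$ with $\sum_i\phi(\diam A^k_i)<4^{-k}$. This uses $\hm^\phi(E)=0$ together with $\hm^\phi_\del(E)=0$ for small $\del$.
\begin{itemize}
\item Since $\phi$ is continuous and $\phi(0)=0$, I can enlarge each diameter to a strictly larger positive number $r^k_i>\diam A^k_i$. I will still keep $\sum_i\phi(r^k_i)<2\cdot4^{-k}$.
\item The point of the strict inequality is fineness: $\diam E_n<s_n$ is required strictly.
\item A set of diameter $0$ also gets a positive radius, and continuity at $0$ makes this possible.
\item Positivity of $\phi$ on $(0,\infty)$ gives $r^k_i\to0$ as $i\to\infty$.
\end{itemize}

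\emph{Step 2: placement.} I place the $k$-th cover on the index set $\parti_k$. Enumerate $\parti_k$ increasingly, put $E_n=A^k_i$ when $n$ is the $i$-th element of $\parti_k$, and set a tentative value $t_n=r^k_i$. This is exactly the pattern in Lemma~\ref{basisDomi2}. If $s$ is decreasing with $s_n>\diam E_n$ for all $n$, then $E\subs\bigcap_k\bigcup_{n\in\parti_k}E_n$, and the lemma gives $E\in\DD(\closs)$.

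\emph{Step 3: the hard part.} The sequence $t$ need not be decreasing, and I must produce $s\in\SEQ$ with $s\geq t$ and $\sum\phi(s_n)<\infty$. A natural first attempt is the least decreasing majorant $s_n=\sup_{m\geq n}t_m$, which could ruin summability. To avoid that, I will reorder first.

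\begin{itemize}
\item Each $r$-family can be arranged nonincreasing in $i$, since sums are unaffected by reordering.
\item Choosing the covers with $\sup_i r^k_i\leq 2^{-k}$ makes the $k$-th cover consist only of small sets.
\item Instead of the fixed sets $\parti_k$, I will allocate indices greedily. Merge all values $r^k_i$ over all $k$ into one nonincreasing sequence $u$. This is possible since only finitely many values exceed any $\eps>0$.
\item Then $\sum\phi(u_n)\leq\sum_k2\cdot4^{-k}<\infty$.
\item Finally I perturb $u$ slightly to a strictly decreasing $s\geq u$ with $s\to 0$ and $\sum\phi(s_n)<\infty$, again by continuity.
\end{itemize}

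The remaining difficulty is that the index set of the $k$-th cover is now some set $N_k$ rather than $\parti_k$. Lemma~\ref{basisDomi2} relied specifically on $\parti_k$, through $s^{\mult{2^k}}$ and odd multiples. I would redo that lemma's argument for $s^{\mult m}$ directly.
\begin{itemize}
\item For each $m$, use the cover indexed by $N_k$ with $k$ large, reindexed as $n\mapsto mn$ in increasing order.
\item This needs the $j$-th element of $N_k$ to be at least $mj$ in a suitable sense. Where this fails, I will pad the allocation: each cover is used repeatedly at sparse positions, chosen so that the monotonicity of $s$ gives $s_{mj}\geq$ the required value.
\item Replacing $4^{-k}$ by even faster rates will, I expect, supply the room needed.
\end{itemize}

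Making this density bookkeeping precise, while keeping both $s$ decreasing and $\phi\comp s$ summable, is where I expect the main work.
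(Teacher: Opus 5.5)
Your $\supseteq$ direction and Steps 1--2 are fine, but there is a genuine gap: Step~3, the only nontrivial step, is never carried out. You reject the least decreasing majorant of $t$ because it ``could ruin summability'', without checking whether it actually does. You then switch to a merged ordering, which throws away the placement on $\parti_k$, so Lemma~\ref{basisDomi2} no longer applies. What replaces it (``padding'', ``even faster rates'') is only an expectation. As written, no $s$ is produced and $E\in\DD(\closs)$ is not verified. (This paper only cites the theorem and gives no proof, so I am judging your argument on its own merits.)

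The missing idea is that the majorant you rejected already works with your own choices: each family nonincreasing in $i$, $\sum_i\phi(r^k_i)<2\cdot4^{-k}$, and $\sup_i r^k_i\leq2^{-k}$.
\begin{itemize}
\item Let $i_k(n)$ be the least $i$ with $2^k(2i-1)\geq n$, that is, the index of the first element of $\parti_k$ that is $\geq n$. Then $s_n=\sup_{m\geq n}t_m=\max_k r^k_{i_k(n)}$.
\item The maximum is attained because $r^k_{i_k(n)}\leq2^{-k}\to0$, so $\phi(s_n)\leq\sum_k\phi(r^k_{i_k(n)})$.
\item For fixed $k$, a given $i$ equals $i_k(n)$ for at most $2^{k+1}$ values of $n$, which is the spacing of $\parti_k$.
\end{itemize}
Hence
\[
\sum_n\phi(s_n)\leq\sum_k 2^{k+1}\sum_i\phi(r^k_i)\leq\sum_k 2^{k+2}4^{-k}<\infty .
\]
So the point to see is the trade-off: in the majorant, $r^k_i$ is repeated about $2^{k+1}$ times, and the rate $4^{-k}$ beats that multiplicity.

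To finish, note that $s_n\geq t_n>\diam E_n$, and $s_n\to0$ by summability and positivity of $\phi$. Replacing $s_n$ by $s_n+\eps_n$, with $\eps_n$ strictly decreasing and chosen small by right continuity of $\phi$ at $s_n$, gives an element of $\SEQ$ with $\phi\comp s\in\el1$. Lemma~\ref{basisDomi2} then yields $E\in\DD(\closs)$.

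Your padding idea can also be completed by the same estimate. Insert each $r^k_j$ with multiplicity $2^k$ into the merged sequence, breaking ties within a family by $j$, and assign $A^k_j$ to the last copy. That copy sits at a position $\geq 2^kj$, which gives $s^{\mult m}$-fineness for all $m\leq 2^k$, at total cost $\sum_k 2^k\cdot 2\cdot 4^{-k}$. Either way, this counting is the content of the proof, and your proposal never supplies it.
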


\subsection*{Compact sets of Hausdorff measure zero}
The following modification of Hausdorff measure was introduced in~\cite{MR3114775,MR3946667}.
If $\phi$ is a gauge, for each $\del>0$ set define
\begin{alignat*}{2}
  &\uhm^\phi_\delta(E)&&=
  \inf\left\{\sum\nolimits_n\phi(\diam E_n):
  \text{$\{E_n\}$ is a finite $\delta$-fine cover of $E$}\right\},\\
  &\uhm_0^\phi(E)&&=\sup_{\delta>0}\uhm^\phi_\delta(E).
\end{alignat*}
Though not a measure, $\uhm_0^\phi$ is a metric outer measure.
Define $\EE(\hm^\phi)$ to be the \si ideal generated by sets $E$ such that $\uhm_0^\phi(E)=0$. 
The notation is explained by the following fact.
\begin{lem}
 If $X$ is complete, then 
\[
  \EE(\hm^\phi)=\{E\subs X:\exists \text{ \si compact } F\supseteq E, \, \hm^\phi(F)=0\}.
\]
\end{lem}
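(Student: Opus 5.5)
We prove the two inclusions separately, working with the generators of $\EE(\hm^\phi)$, i.e.\ sets $E$ with $\uhm_0^\phi(E)=0$.

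\textbf{Left to right.} It suffices to show that every $E$ with $\uhm_0^\phi(E)=0$ lies in some \si compact (indeed compact) set $F$ with $\hm^\phi(F)=0$. The class of sets contained in a \si compact $\hm^\phi$-null set is a \si ideal, so the full inclusion then follows.

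First, $\uhm^\phi_\delta$ does not change if each cover set is replaced by its closure, since diameters are preserved. Hence $\uhm_0^\phi(\clos E)=\uhm_0^\phi(E)=0$, and we may assume $E$ is closed.

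Next, $E$ is totally bounded. Fix $\delta>0$; we may assume $\phi(t)>0$ for $t>0$ (a standing convention for gauges). Fix also $\eps>0$ with $\eps<\phi(\delta/2)$. Since $\uhm^\phi_{\delta}(E)=0$, there is a finite $\delta$-fine cover $\{E_i\}$ with $\sum_i\phi(\diam E_i)<\eps$. Every set in this cover has diameter $<\delta$, so $E$ is covered by finitely many sets of diameter $<\delta$. Thus $E$ is totally bounded, and being closed in the complete space $X$, it is compact.

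Finally, $\hm^\phi_\delta(E)\leq\uhm^\phi_\delta(E)=0$ for every $\delta>0$, because finite covers are among the countable ones. Hence $\hm^\phi(E)=0$, and $F=E$ works.

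\textbf{Right to left.} Let $F=\bigcup_m K_m$ with each $K_m$ compact and $\hm^\phi(F)=0$. Then $\hm^\phi(K_m)=0$ for each $m$, so it suffices to show that a compact $K$ with $\hm^\phi(K)=0$ satisfies $\uhm_0^\phi(K)=0$.

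Fix $\delta,\eps>0$ and take a countable $\delta$-fine cover $\{E_n\}$ of $K$ with $\sum_n\phi(\diam E_n)<\eps$. We need to make these sets open without increasing their $\phi$-cost much, and this is the main obstacle: it requires right-continuity of $\phi$. We assume, as is standard for gauges, that $\phi$ is right-continuous. If instead the paper's gauges are only nondecreasing with $\phi(0)=0$, we first replace $\phi$ by its right-continuous regularization. This changes $\hm^\phi$ only up to the standard comparison and does not change which sets are null.

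Using right-continuity, choose $r_n>0$ such that $\diam E_n+2r_n<\delta$ and $\phi(\diam E_n+2r_n)<\phi(\diam E_n)+\eps2^{-n}$. Let $U_n=\{x:d(x,E_n)<r_n\}$; these are open sets with $\diam U_n\leq\diam E_n+2r_n$. By compactness, finitely many $U_n$ cover $K$, and this finite cover is $\delta$-fine with total cost $<2\eps$. Hence $\uhm^\phi_\delta(K)=0$ for every $\delta$, so $\uhm_0^\phi(K)=0$, and $F\in\EE(\hm^\phi)$.
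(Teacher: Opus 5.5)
Your proof is correct, and it is the standard argument for this lemma; the paper gives no proof and only refers to the literature. Closure plus total boundedness gives a compact $\hm^\phi$-null superset of each generator, and right-continuity of $\phi$ lets you fatten a cheap countable cover into open sets and extract a finite subcover.

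Right-continuity is part of the standing definition of a gauge in the sources this paper builds on, so just use it and delete your fallback remark, which is false: replacing $\phi$ by its right-continuous regularization $\phi^+$ can change the null ideal. For instance, take the compact ultrametric space $\prod_{n\in\Nset}\{0,\dots,n+1\}$ with $d(x,y)=r_{\abs{x\wedge y}}$ and the left-continuous step gauge with $\phi(r_n)=\frac1{(n+1)(n+1)!}$ and $\phi^+(r_n)=\frac1{n\cdot n!}$. Covering by the $(n+1)!$ cones of level $n$ gives $\hm^\phi=0$, while the uniform mass distribution gives $\hm^{\phi^+}\geq1$.
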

See~\cite{MR3114775,invariants2} for the proof and more information on these ideals.

\section{Sharply dominated sets}\label{sec:sharp}

We shall see later that dominated sets do not behave in cartesian products, while
a slight strengthening, the so called sharply dominated sets, do.

\begin{defn}[$\gamma$-groupable covers]  
Recall (cf.~\cite{MR3946667}): family $\{A_k:k\in\Nset\}$ is a \emph{$\gamma$-cover} of $X$ 
if every $x\in X$ is contained in all but finitely many $A_n$'s.
A family $\GG=\{G_n:n\in\Nset\}$ is called a \emph{$\gamma$-groupable} cover of $X$ if there is 
a disjoint collection $\{\mc F_k:k\in\Nset\}$ of finite subfamilies of $\GG$ 
such that each $x\in X$ is covered by all but finitely $\mc F_k$'s.
This can be also phrased as follows: $\{G_n:n\in\Nset\}$ is a $\gamma$-groupable cover of $X$ 
if there is a disjoint collection $\{F_k:k\in\Nset\}$ of finite subsets of $\Nset$ such that, 
letting $A_k=\bigcup_{n\in F_k}G_n$, the family $\{A_k:k\in\Nset\}$ is a $\gamma$-cover.

The families $\{\mc F_k:k\in\Nset\}$ or $\{F_k:k\in\Nset\}$ are referred to as witnessing families 
or witnessing sets. 
By \cite{invariants2}, the witnessing sets may be supposed to be consecutive intervals covering $\Nset$.
\end{defn}
\begin{defn}[sharp domination]
Let $S\subs\SEQ$. A set $E\subs X$ is \emph{sharply $S$-dominated} 
if for each $s\in S$ there is an $s$-fine $\gamma$-groupable cover of $E$.
The family of all sharply dominated subsets of $X$ is denoted by $\DDsh_X(S)$ or $\DDsh(S)$.
\end{defn}
\begin{prop}\label{sharp3}
Let $X$ be a complete space and $S\subs\SEQ$. 
\begin{enum}
\item $\DDsh(S)=\DDsh(\wh S)=\bigcap_{s\in S}\DDs(\whs)$.
\item $\DDsh(S)$ is a \si ideal.
\item If $S\subs\SEQ$ is additively complete, then $\DDs(S)\subs\DDsh(S)$.
\end{enum}
\end{prop}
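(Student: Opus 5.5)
The plan is to reduce everything to a single sequence: for a fixed $s\in\SEQ$ and complete $X$, I will prove that a set $E\subs X$ has an $s$-fine $\gamma$-groupable cover if and only if $E\in\DDs(\whs)$. Here $\whs=\{s^{\shift j}:j\in\Nset\}$ is additively complete, so $\DDs(\whs)$ is a \si ideal by~\cite{micro1}. Granting this equivalence, (i) follows at once: $\DDsh(S)=\bigcap_{s\in S}\DDs(\whs)$ by definition of sharp domination. Moreover, every $t\in\wh S$ has the form $s^{\shift k}$ with $s\in S$, and then $\wh t\subs\whs$, so $\DDs(\whs)\subs\DDs(\wh t)$. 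Together with $S\subs\wh S$, this gives $\bigcap_{t\in\wh S}\DDs(\wh t)=\bigcap_{s\in S}\DDs(\whs)$. Part (ii) follows because an intersection of \si ideals is a \si ideal. For (iii), suppose $S$ is additively complete and $E\subs F$ with $F$ $\sigma$-compact and $S$-dominated. For each $s\in S$ and each $j$ there is $s'\in S$ with $s'\leq s^{\shift j}$. Since domination by a smaller sequence implies domination by a larger one, $F$ is $\whs$-dominated, hence $E\in\DDs(\whs)$ for all $s\in S$.

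($\Rightarrow$) Let $\{G_n\}$ be $s$-fine and $\gamma$-groupable, with witnessing sets $F_k$ that are consecutive intervals (as allowed by~\cite{invariants2}). Replace each $G_n$ by its closure; this does not change diameters. Put $A_k=\bigcup_{n\in F_k}\clos{G_n}$, $K_m=\bigcap_{k\geq m}A_k$ and $F=\bigcup_m K_m\supseteq E$. Each $K_m$ is closed. It is also totally bounded, since for every $k\geq m$ it is covered by finitely many sets of diameter $<s_n$ with $n\geq\min F_k\to\infty$. By completeness $K_m$ is compact, so $F$ is $\sigma$-compact. To see that $F$ is $s^{\shift j}$-dominated, pick distinct $k_1<k_2<\dots$ with $k_m\geq m$ and $\min F_{k_1}>j$. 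Then $K_m\subs A_{k_m}$, so the sets $\clos{G_n}$ with $n\in\bigcup_m F_{k_m}$ cover $F$. Placing $\clos{G_n}$ at position $n-j$ is injective and gives $\diam\clos{G_n}<s_n=s^{\shift j}_{n-j}$. Filling unused positions with empty sets yields an $s^{\shift j}$-fine cover of $F$.

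($\Leftarrow$) Let $E\subs\bigcup_m K_m$ with each $K_m$ compact and $\whs$-dominated. Put $K'_m=K_1\cup\dots\cup K_m$; it is compact and $\whs$-dominated since $\DDs(\whs)$ is an ideal. Given a compact $\whs$-dominated $K$ and $N\in\Nset$, take an $s^{\shift N}$-fine cover of $K$ and enlarge each set to an open set still of diameter $<s_{n+N}$. By compactness there is a finite subcover, so $K$ is covered by finitely many sets $H_i$ with $\diam H_i<s_{i+N}$ for $i$ in a finite set. Proceed recursively. At stage $k$, let $N_k$ be the largest index used so far, and cover $K'_k$ by finitely many sets placed at the positions $i+N_k$; these positions form a finite block $F_k$ lying beyond all previous blocks. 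The resulting sequence is $s$-fine, and since the $K'_k$ increase, every $x\in E$ lies in $K'_k$ for all large $k$, so it is covered by the $k$-th block for all large $k$. Hence the cover is $\gamma$-groupable.

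The main obstacle I expect is the direction ($\Rightarrow$). There, completeness is essential to get compactness of the sets $K_m$, and some care is needed to re-index the blocks injectively so that a single $s^{\shift j}$-fine sequence covers all the $K_m$ simultaneously; the choice of the sparse subsequence $k_m$ is exactly the device for this.
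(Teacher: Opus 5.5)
Your proposal is correct and is essentially the paper's proof: both show that $E$ has an $s$-fine $\gamma$-groupable cover if and only if $E\in\DDs(\whs)$. One direction uses the compact sets $\bigcap_{k\geq m}A_k$, the other uses recursively chosen finite open covers placed in consecutive blocks, and (ii) and (iii) are then read off. The differences are cosmetic: your sparse subsequence $k_m$ is not needed, since the tail $\{\clos{G_n}:n>j\}$ already covers $F$ (as in the paper), and the paper gets $\DDsh(S)\subs\DDsh(\wh S)$ directly by discarding the first $k$ sets of a cover rather than through your intersection identity.
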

\begin{proof}
(i)
Let $s\in\SEQ$. 
If $\seq{E_n:n\in\nset}$ is an $s$-fine $\gamma$-groupable cover
of $E$, then $\seq{E_n:n>k}$ is $s^{\shift k}$-fine for any $k\in\Nset$, 
and it is still a $\gamma$-groupable cover of $E$.

The inclusion $\DDsh(S)\subs\DDsh(\wh S)$ follows from this argument and the reverse inclusion is trivial. 

To prove $\DDsh(S)\subs\bigcap_{s\in S}\DDs(\whs)$ suppose $E$ is sharply $S$-dominated and $s\in S$. Let 
$\{E_n:n\in\nset\}$ be an $s$-fine $\gamma$-groupable cover of $E$ and $\{F_k:k\in\nset\}$ 
be a witnessing partition. We may also suppose that all $E_n$'s are closed.
For each $k,m$ let $A_k=\bigcup_{n\in F_k}E_n$ and $B_m=\bigcap_{k\geq m} A_k$.

For any $\eps>0$ there is $k$ such that $s_n<\eps$ for all $n\in F_k$. 
Since $F_k$'s are finite, it follows that each $B_m$ is totally bounded. 
Since each $A_k$ is a union of finitely many closed sets, each $B_m$ is also closed. 
Overall, each $B_m$ is compact.
Consider $B=\bigcup_{m\in\Nset}B_m$. It is \si compact, contains $E$ and $\{E_n:n\geq m\}$ 
is a cover of $B$ for each $m$, so $B$ is $\whs$-dominated. We proved that $E\in\DDs(\whs)$,
as required.

To prove the reverse inclusion it is enough to show that $\DDs(\whs)\subs\DDsh(s)$.
So let $E$ be \si compact $\whs$-dominated set and $\{K_n:n\in\nset\}$ 
an increasing cover of $E$ by compact $\whs$-dominated sets. 
Using compactness of $K_n$'s, find by recursion an increasing sequence $k_n$ 
and a finite open cover $\{U_i:i\in[k_{n},k_{n+1})\}$ of $K_n$ such that $\diam U_i<s_i$. 
It is clear that $\mc U=\{U_i:i\in\nset\}$ is an $s$-fine cover and that 
$\{U_i:i\in[k_{n},k_{n+1})\}$ are the finite families witnessing that $\mc U$ is 
$\gamma$-groupable.

(ii) We know from \cite[6.2]{micro1} that $\DDs(\whs)$ is a \si ideal, 
so (ii) follows immediately from (i).

(iii) Since $S=\wh S$, we have $\DDs(S)\subs\bigcap_{s\in S}\DDs(\whs)$. 
Thus, (iii) follows from (i).
\end{proof}
As to (iii), the inclusion in general does not reverse: Letting $X=\Rset$,
Borel conjecture yields $\DD(\SEQ)=\DDsh(\SEQ)$, so for $S=\SEQ$ it is consistent.
On the other hand, a Luzin set is not meager, yet it has strong measure 
zero, see Definition~\ref{smzdef}.
If $s\in\el1$, then $\DDsh(s)$ consists of meager sets only, therefore $\DD(\SEQ)\nsubseteq\DDsh(s)$. 
Since the existence of Luzin set is consistent, so is $\DD(\SEQ)\nsubseteq\DDsh(s)$.
In this case we thus have $\DD(S)\nsubseteq\DDsh(S')$ for any $S,S'\subs\SEQ$ 
as long as $\Rset\notin\DDsh(S')$.

Theorems~\ref{versus0} and~\ref{versus4} (and other results from~\cite{micro1}) have 
counterparts for sharply dominated sets. The proofs are the same, with the aid of the 
following fact that is proved in~\cite{MR3946667}:
$E\in\EE(\hm^\phi)$ if and only if there is a $\gamma$-groupable cover $\{E_n:n\in\Nset\}$ 
such that $\sum_{n\in\Nset}\phi(\diam E_n)<\infty$.
\begin{thm}\label{versus1}
Let $\phi$ be a gauge and $s\in\SEQ$. The following are equivalent.
\begin{enum}
\item $\phi\circ s\in\el1$,
\item for every metric space $X$, $\DDsh_X(\closs)\subs\EE(\hm_X^\phi)$,
\item $\DDsh_{\el2}(\closs)\subs\EE(\hm_{\el2}^\phi)$.
\end{enum}
\end{thm}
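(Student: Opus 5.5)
The plan is to prove (i)$\Rightarrow$(ii)$\Rightarrow$(iii)$\Rightarrow$(i). Throughout I use the characterization from~\cite{MR3946667}: $E\in\EE(\hm^\phi)$ if and only if $E$ has a $\gamma$-groupable cover $\{E_n\}$ with $\sum_n\phi(\diam E_n)<\infty$. The direction (ii)$\Rightarrow$(iii) is trivial, so only the other two need work.

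For (i)$\Rightarrow$(ii), let $E\in\DDsh_X(\closs)$. Since $s\in\closs$, there is an $s$-fine $\gamma$-groupable cover $\{E_n\}$ of $E$. Because $\phi$ is a gauge, it is nondecreasing, so
\[
\sum_n\phi(\diam E_n)\leq\sum_n\phi(s_n)<\infty .
\]
The characterization then gives $E\in\EE(\hm^\phi)$. This direction does not even need the multiplicative completion; domination by $s$ alone suffices.

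For (iii)$\Rightarrow$(i), I argue by contraposition: assume $\phi\circ s\notin\el1$ and build a set in $\DDsh_{\el2}(\closs)\setminus\EE(\hm^\phi)$. The natural candidate is the compact set used in~\cite{micro1} for Theorem~\ref{versus0}. There one constructs, in $\el2$ (e.g.\ on an orthonormal-basis ``tree'' of points), a compact set $K$ that is $\closs$-dominated and satisfies $\hm^\phi(K)>0$.

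Two points need checking for this $K$.
\begin{itemize}
\item \emph{Sharp domination.} Since $K$ is compact and $\closs$-dominated, I want $K\in\DDsh(\closs)$. Proposition~\ref{sharp3}(i) gives $\DDsh(\closs)=\bigcap_k\DDs(\wh{s^{\mult k}})$. So it suffices that $K$ is $\wh{s^{\mult k}}$-dominated for every $k$, i.e.\ dominated by every shift of every $s^{\mult k}$. I expect the construction from~\cite{micro1} either already gives this, or can be modified by discarding finitely many initial terms. If not, I would redo the construction so that the covers at level $m$ use only indices beyond $m$; this is possible since each finite stage of the construction is covered by finitely many small sets.
\item \emph{Not in $\EE$.} Since $K$ is compact and $\hm^\phi(K)>0$, the lemma characterizing $\EE(\hm^\phi)$ via $\sigma$-compact null supersets shows $K\notin\EE(\hm^\phi)$, because $\el2$ is complete.
\end{itemize}

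The main obstacle is therefore verifying the shift-invariance (additive completeness) of the domination for the counterexample from~\cite{micro1}. This is exactly the point where sharp domination differs from plain domination, and it may require re-examining that construction: choose the blocks of indices assigned to each generation far enough out, while keeping $\sum\phi(s_n)=\infty$ on the tail so that $\hm^\phi(K)>0$ persists.
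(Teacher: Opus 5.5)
Your route is the one the paper intends: it says only that the proof of Theorem~\ref{versus0} from \cite{micro1} carries over once one has the $\gamma$-groupable characterization of $\EE(\hm^\phi)$. Your (i)$\Rightarrow$(ii) is complete.

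The gap is in (iii)$\Rightarrow$(i). You never show that the witness $K$ is sharply $\closs$-dominated; you only hope the construction of \cite{micro1} gives this or can be rebuilt to. No rebuilding is needed, because domination by $\closs$ already gives domination by every shift of every $s^{\mult k}$. For $j\geq 0$ and $n\geq 1$ we have $k(j+1)n\geq k(n+j)$, so, $s$ being decreasing,
\[
  s^{\mult{k(j+1)}}\leq (s^{\mult k})^{\shift j}\quad\text{termwise}.
\]
Hence every $\closs$-dominated set is $\wh{s^{\mult k}}$-dominated for every $k$, i.e.\ $\DD(\closs)=\DD(\wh{\closs})$. A compact $K$ is its own $\sigma$-compact hull, so $K\in\DDs(\wh{s^{\mult k}})$ for all $k$. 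Proposition~\ref{sharp3}(i), applied in the complete space $\el2$, then gives $K\in\DDsh(\closs)$. By hand: enlarge $(s^{\mult k})^{\shift j}$-fine covers to open sets, take finite subcovers, and stack them on consecutive blocks of indices, as in the proof of that proposition. So your diagnosis that shift-invariance is ``exactly where sharp domination differs from plain domination'' is mistaken. For a multiplicatively complete family shifts come for free; what separates the two notions is compactness (the $\sigma$-compact hull).

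Consequently, the one thing your argument really imports from \cite{micro1} is that the witness is \emph{compact}: a compact $\closs$-dominated $K\subseteq\el2$ with $\hm^\phi(K)>0$. State this explicitly as what you use. If you prefer not to rely on it, a compact witness is easy to write down, for instance when $\phi$ is continuous and strictly increasing:
\begin{enumerate}
\item Take the binary Cantor cube $\Cube(2,r)$ with $\phi(r_j)=2^{-j}$. It has $\hm^\phi>0$ by the mass distribution principle and embeds isometrically into $\el2$.
\item Fix $k$. At index $n$, place a cylinder of the coarsest level $j$ with $r_j<s_{kn}$, disjoint from the previously chosen ones. This is possible because the levels are nondecreasing in $n$, so the uncovered part is always a union of cylinders of the current level.
\item The mass $2^{-j}$ of that cylinder is at least $\phi(s_{kn})/2$. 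Since $\sum_n\phi(s_{kn})\geq\frac1k\sum_{i\geq k}\phi(s_i)=\infty$, the cube is exhausted after finitely many steps.
\end{enumerate}
Finally, to get $K\notin\EE(\hm^\phi)$ you do not need the $\sigma$-compact description of $\EE(\hm^\phi)$. The inclusion $\EE(\hm^\phi)\subseteq\NN(\hm^\phi)$ is immediate from the definitions.
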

\begin{thm}\label{versus4a}
If $\phi$ is a continuous gauge, then
$\EE(\hm^\phi)=\bigcup_{\phi\comp s\in\el1}\DDsh(\closs)$.
\end{thm}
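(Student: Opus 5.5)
We prove Theorem~\ref{versus4a} in the same way as its counterpart Theorem~\ref{versus4}. Throughout we use the characterization from~\cite{MR3946667}: $E\in\EE(\hm^\phi)$ if and only if $E$ has a $\gamma$-groupable cover $\{E_n\}$ with $\sum_n\phi(\diam E_n)<\infty$.

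\emph{Inclusion $\supseteq$.} Suppose $\phi\circ s\in\el1$ and $E\in\DDsh(\closs)$. By Theorem~\ref{versus1} we get $\DDsh(\closs)\subs\EE(\hm^\phi)$. If one prefers to avoid that theorem, argue directly. Since $s\in\closs$, there is an $s$-fine $\gamma$-groupable cover $\{E_n\}$ of $E$. Then
\[
  \sum_n\phi(\diam E_n)\leq\sum_n\phi(s_n)<\infty
\]
by monotonicity of the gauge, so $E\in\EE(\hm^\phi)$. Continuity of $\phi$ is not needed here.

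\emph{Inclusion $\subseteq$.} Let $E\in\EE(\hm^\phi)$ and fix a $\gamma$-groupable cover $\{E_n\}$ with $\sum\phi(\diam E_n)<\infty$. Put $d_n=\diam E_n$.
\begin{enumerate}[\rm(a)]
\item If some $d_n=0$, enlarge the sets slightly (or treat the singletons separately) so that all $d_n>0$ while the sum stays finite. Points are harmless because every $\DDsh(\closs)$ is a \si ideal by Proposition~\ref{sharp3}(ii).
\item We must find $s\in\SEQ$ with $\phi\circ s\in\el1$ such that $E$ is sharply $s^{\mult k}$-dominated for every $k$. Since $\phi(d_n)\to0$ and $\phi$ is a gauge, $d_n\to0$ (we may assume $\phi(t)>0$ for $t>0$; otherwise $\EE(\hm^\phi)$ is trivial). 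Reorder the $d_n$ nonincreasingly. A permutation of the indices keeps the cover $\gamma$-groupable, since the witnessing finite families stay finite and disjoint.
\item Choose $u_n\to\infty$ with $\sum u_n\phi(d_n)<\infty$. Use continuity of $\phi$ to pick strictly decreasing $s_n>d_n$ with $\phi(s_n)\le u_n\phi(d_n)+2^{-n}$, so that $\phi\circ s\in\el1$.
\end{enumerate}

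The main obstacle is getting $s^{\mult k}$-fine covers for every $k$, not only an $s$-fine one. The plan is to split the sequence $\{E_n\}$ into $k$ interleaved subsequences, each of which is still a $\gamma$-groupable cover of $E$. This works if each witnessing group $F_j$ is duplicated: replace the cover by one in which every set is repeated infinitely often in a controlled way, e.g.\ $E_n$ is repeated $m_n$ times with $\sum m_n\phi(d_n)<\infty$ and $m_n\to\infty$. Then, given $k$, for all large $n$ the set $E_n$ can be assigned to a position $i$ with $s_{ki}>d_n$ (use $s$ built from the repeated list, decaying slowly enough that $s_{ki}$ still exceeds the relevant $d_n$). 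Only finitely many groups fail this, and discarding them keeps a $\gamma$-groupable cover.

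Alternatively, and more cleanly, imitate the proof of Theorem~\ref{versus4} in~\cite{micro1} verbatim: that proof produces $s$ and $s^{\mult k}$-fine subcovers extracted from a single cover. Applied to the $\gamma$-groupable cover, it preserves $\gamma$-groupability, because extracting cofinitely many members of each finite group retains the witnessing property.
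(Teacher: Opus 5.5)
Your proposal is correct and follows the paper's route. The paper's proof simply reruns the argument for Theorem~\ref{versus4} using the characterization from \cite{MR3946667}: $E\in\EE(\hm^\phi)$ iff $E$ has a $\gamma$-groupable cover with $\sum_n\phi(\diam E_n)<\infty$. Your repetition trick is exactly what makes the inclusion $\subseteq$ work: repeat $d_n$ exactly $m_n\to\infty$ times with $\sum_n m_n\phi(d_n)<\infty$, so that $s_{kn}\geq s_{m_1+\dots+m_n}>d_n$ for all large $n$ and a tail of the original cover is already $s^{\mult k}$-fine. The splitting into $k$ interleaved subcovers is therefore unnecessary. One wording fix: $\gamma$-groupability is preserved by discarding finitely many members of the cover (hence finitely many whole groups), not by removing members from within each finite group.
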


\section{Products}\label{sec:products}
In this section we look into Cartesian products of dominated sets with other small
sets. We will equip, somehow randomly, the Cartesian product of two metric spaces with the 
$\el1$-metric:
\[
  d_{X\times Y}((x_1,y_1),(x_2,y_2))=d_X(x_1,x_2)+d_Y(y_1,y_2).
\]
Let us first collect some preliminary information about the products of 
a dominated set with other sets.
\begin{exs}\label{prodex}
(i) If $S$ is multiplicatively complete, then $\DD(S)$ is a \si ideal. 
Therefore, a product $X\times Y$ of an $S$-dominated set with a countable set is
$S$-dominated.

(ii) However, for any $s\in\SEQ$ there is an $\closs$-dominated set $X$ such that
its product with \emph{any} uncountable set is not $\closs$-dominated.
This unexpected result based on Kwela sets is established in Corollary~\ref{produn}.

(iii) Under the Continuum Hypothesis there is an $\SEQ$-dominated 
(i.e., strong measure zero) set $X\subs\Rset$ such that $X\times X$ has positive
linear measure and $X+X=\Rset$.

(iv) If $S$ is countably determined, then there is a compact $S$-dominated set 
$C\subs\Cset$ such that $C+C=\Cset$.
\end{exs}

So when the first factor in the product is merely an $S$-dominated set, 
the product with another set is rarely guaranteed to be $S$-dominated.
When we strenghten domination to sharp domination, the situation is quite 
different.

\begin{defn}[Strong and sharp measure zero]\label{smzdef}
A set $E$ in a metric space is said to have
\begin{itemyze}
\item \emph{strong measure zero} (\smz) if it is $\SEQ$-dominated. 
This classical notion is due to Borel~\cite{MR1504785}. 
By the famous Galvin-Mycielsi-Solovay Theorem, $X\subs\Rset$ has \smz{} if and 
only if $X+M\neq\Rset$ for each meager set $M$, and the same remains true in any 
locally compact Polish group.
\item \emph{sharp measure zero} (\ssmz) if it is sharply $\SEQ$-dominated. 
This notion is due to Zindulka~\cite{MR3946667}. 
An analogy of Galvin-Mycielski-Solovay Theorem holds: in any locally compact
Polish group admitting an invariant metric, $X$ has \ssmz{} if an only if $X+M$ is
meager for each meager set $M$, see~\cite{MR4472524}.
\end{itemyze}
\end{defn}
Note that the \emph{Borel Conjecture} is the statement that all \smz{} sets are countable, 
and that while the Continuum Hypothesis yields a failure of Borel Conjecture, it was proved relatively
consistent by Laver~\cite{MR0422027}. The following discussion is void under Borel Conjecture.
\begin{thm}\label{SMZ}
Let $X$ be a sharply $S$-dominated metric space. 
\begin{enum}
\item If $Y$ is \smz, then $X\times Y$ is $S$-dominated.
\item If $Y$ \ssmz, then $X\times Y$ is sharply $S$-dominated.
\end{enum}
\end{thm}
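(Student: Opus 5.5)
Fix $s\in S$. By sharp domination of $X$ there is an $s$-fine $\gamma$-groupable cover $\{E_n:n\in\nset\}$ of $X$. As remarked after the definition of $\gamma$-groupable covers, we may take the witnessing sets to be consecutive intervals $F_k=[m_k,m_{k+1})$ covering $\nset$. Put $A_k=\bigcup_{n\in F_k}E_n$. Each $x\in X$ then lies in $A_k$ for all $k\geq k(x)$. The idea is to use the slack in $s$ inside each block to also cover a ``slice'' of $Y$. To do this, split the indices of $s$ so that $X$ uses only some of them and $Y$ gets the rest. Concretely, I would first replace $\{E_n\}$ by a cover that is fine with respect to a faster sequence. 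Apply sharp domination of $X$ not to $s$ itself but to the sequence $t$ defined by $t_n=\tfrac12 s_{2n}$. This $t$ is in $\SEQ$, though not necessarily in $S$.

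This is the main obstacle: sharp $S$-domination only gives covers for sequences in $S$, and $S$ is arbitrary. The way around it is to interleave the covers instead of using only even indices. Take the $s$-fine $\gamma$-groupable cover $E_n$ with interval blocks $F_k$, and enumerate the index set $\{(n,j): n\in F_k,\ j\in\nset\}$ into $\nset$ so that each new index is no smaller than $n$. To each $E_n$, $n\in F_k$, attach sets $G_j$ from a cover of $Y$ that is fine for a suitable sequence $\sigma$. We want the product $E_n\times G_j$ to have diameter $\leq\diam E_n+\diam G_j<s_{\text{index}}$. The freedom is to choose $\sigma$ after seeing the blocks $F_k$: use only finitely many $G$'s per block (indices $j\in J_k$, a finite interval), and place the products $\{E_n\times G_j:n\in F_k,\ j\in J_k\}$ at indices in a block of $\nset$.

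The obstruction is that $E_n$ already uses index $n$, and the products $E_n\times G_j$ need new indices, which can only be larger, where $s$ is smaller. To fix this, apply sharp domination of $X$ to a subsequence-type shift. First thin the blocks: a sharp cover for $s$ remains $\gamma$-groupable after discarding finitely many blocks, and because $\diam E_n<s_n$ is strict, we have the slack $\eps_n=s_n-\diam E_n>0$. I will use only this slack, not faster sequences from $S$.

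For (i), the plan is as follows. Let $N_k=\abs{F_k}$, and let $\eta_k=\min_{n\in F_k}\eps_n>0$. Since $Y$ is \smz, $Y$ has a cover $\{G_j\}$ with $\diam G_j<\eta_k$ for all $j\in J_k$, where $J_k$ are consecutive intervals with $\abs{J_k}=1$. In other words, take a cover $\{G_k\}$ of $Y$ with $\diam G_k<\eta_k$. Then every $(x,y)\in X\times Y$ has $y\in G_{k_0}$ for some $k_0$, and $x\in A_k$ for all large $k$. This is a problem: the point $y$ lies in a single $G_{k_0}$, while $x$ is only guaranteed to lie in $A_k$ for large $k$, not for $k=k_0$. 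To resolve it, cover $Y$ by each $G_i$ infinitely often. Choose a bijection $\nset\to\nset\times\nset$, $k\mapsto(i,r)$, and a single \smz-cover $\{G_k\}$ of $Y$ with $\diam G_k<\eta_k$. Then set $H_n=E_n\times G_{i}$ for $n\in F_k$, where $k\mapsto(i,r)$, and require $\diam G_i<\eta_k$ for all $k$ mapping to $i$. Since infinitely many $k$ map to each $i$ and $\eta_k\to0$, this requirement is impossible as stated. So instead use the $\gamma$-property of $X$ together with $\omega$ copies: use $\{E_n\}$ restricted to blocks $F_k$ with $k\geq i$ for the slice $G_i$, and find disjoint index sets inside $\nset$. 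Here sharpness is exactly what lets each slice use a tail, and $s$-fineness is preserved by the slack argument. Then $(x,y)$ with $y\in G_i$ is covered once $k\geq\max(i,k(x))$.

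For (ii), the same construction, with $Y$ given a $\gamma$-groupable cover, yields products grouped finitely per block, which is $\gamma$-groupable.
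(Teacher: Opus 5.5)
\textbf{There is a genuine gap in your step for (i).} Your setup is the right one: the $s$-fine $\gamma$-groupable cover $\{E_n\}$ with blocks $F_k$, the slack $\eta_k=\min_{n\in F_k}(s_n-\diam E_n)>0$, and the plan to attach one set from $Y$ to each block. You also correctly isolate the difficulty: $y$ must meet a block $F_k$ with $k\geq k(x)$, and in general $k(x)$ is unbounded as $x$ ranges over $X$.

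Your proposed resolution does not work. Pairing a single slice $G_i$ with the blocks $F_k$, $k\geq i$, requires $\diam G_i<\eta_k$ for infinitely many $k$. The same holds for any infinite set of blocks, which is what covering $\{y\}\times X$ needs in general. Since $\eta_k\to0$, this is exactly the impossibility you pointed out one sentence earlier. ``Finding disjoint index sets inside $\nset$'' cannot rescue it, because, as you also observed, $E_n$ cannot be moved to an index $m>n$: nothing guarantees $\diam E_n<s_m$. If instead each slice is paired with only finitely many blocks, the points $x$ with large $k(x)$ are missed. So (i) is not proved, and (ii), which you reduce to ``the same construction'', inherits the gap.

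\textbf{The missing ingredient is that a \smz{} set admits $\lambda$-covers at any prescribed scale.} For any positive sequence $\seq{\eta_k}$ there is a cover $\{G_k:k\in\nset\}$ of $Y$ with $\diam G_k<\eta_k$ such that each $y\in Y$ lies in $G_k$ for infinitely many $k$. To get it, split $\nset$ into infinitely many infinite sets $N_j$. For each $j$, use that $Y$ is $\SEQ$-dominated to cover $Y$ by $\{G_k:k\in N_j\}$ fine with respect to $\eta\rest N_j$, after shrinking to a sequence in $\SEQ$.

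With this, one fresh set per block suffices. Put $H_n=E_n\times G_k$ for $n\in F_k$. Then $\diam H_n\leq\diam E_n+\diam G_k<\diam E_n+\eta_k\leq s_n$. Given $(x,y)$, pick $k\geq k(x)$ with $y\in G_k$; some $n\in F_k$ has $x\in E_n$, so $(x,y)\in H_n$. This is the paper's argument.

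For (ii), take $\{G_k\}$ to be an $\eta$-fine $\gamma$-groupable cover of $Y$ with witnessing disjoint finite sets $D_i$. It is automatically a $\lambda$-cover. Since $\min D_i\to\infty$, for all large $i$ there is $k\in D_i$ with $y\in G_k$ and $k\geq k(x)$. Hence the sets $\bigcup_{k\in D_i}F_k$ witness that $\{H_n\}$ is $\gamma$-groupable.
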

\begin{proof}
Let $s\in S$ and let $\{E_n:n\in\nset\}$ a $\gamma$-groupable $s$-fine cover of $X$. 
Let $\{F_k:k\in\Nset\}$ be a partition of $\nset$ into finite sets such that the family 
$\{\bigcup_{n\in F_k}E_n:k\in\nset\}$ is a $\gamma$-cover. 
For each $k\in\nset$ let $\eps_k<\min_{n\in F_k} (s_n-\diam E_n)$ be positive. It exists since $\diam E_n<s_n$.

(i) Since $Y$ is \smz, it has an $\seq{\eps_k}$-fine $\lambda$-cover $\{S_k:k\in\nset\}$.
For $n\in\nset$ let $A_n=E_k\times S_k$ where $k$ is the unique number for which $n\in F_k$.

We claim that $\{A_n:n\in\nset\}$ is a cover of $X\times Y$.
Indeed, if $x\in X$, then $\fmany k\ x\in\bigcup_{n\in F_k}E_n$
and if $y\in Y$ then $\emany k\ y\in S_k$. Hence there is $k$ such 
that $(x,y)\in\bigcup_{n\in F_k}E_n\times S_k=\bigcup_{n\in\nset}A_n$. Since clearly 
\[
  \diam A_n=\diam E_n+\diam S_k<\diam E_n+\eps_k<s_n,
\]
it is also $s$-fine, as required, so (i) is proved.

(ii) is proved in the same manner, except that the cover $\{S_k\}$ can be chosen to be $\gamma$-groupable.
So suppose that $\{G_i:i\in\Nset\}$ be the partition of $\nset$ into finite sets such that the family 
$\{\bigcup_{k\in G_i}S_k:i\in\nset\}$ is a $\gamma$-cover. For each $i$ let $H_i=\bigcup_{k\in G_i}F_k$. 
We claim that $\{A_n:n\in\nset\}$ (where $A_n$ are defined as above) is a $\gamma$-groupable cover 
of $X\times Y$ and $H_i$'s are the witnessing families. Indeed: if $y\in Y$, 
then $\fmany i\ \exists k\in G_i \ y\in S_k$ and also if $x\in X$, 
then $\fmany k\ \exists n\in F_k\ x\in E_n$. It follows that 
\[
  \fmany  i\ (\exists k\in G_i \ y\in S_k \land \exists n\in F_k\ x\in E_n ),
\]
i.e., $\fmany i\ \exists n\in H_i\ (x,y)\in A_n$, as required.
\end{proof}
\begin{coro}\label{SMZ2}
Let $\group$ be a Polish group with a left-invariant metric and $X\subs\group$ be sharply $S$-dominated. 
\begin{enum}
\item If $Y\subs\group$ is \smz, then $X+Y$ is $S$-dominated.
\item If $Y\subs\group$ is \ssmz, then $X+Y$ is sharply $S$-dominated. \end{enum}
\end{coro}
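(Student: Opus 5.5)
The plan is to realize $X+Y$ as the image of the product $X\times Y$ under the group operation, and to transfer domination from the product via Theorem~\ref{SMZ}. The transfer rests on one fact: domination and sharp domination are preserved by maps that do not increase distances. The one point needing care is the invariance of the metric. I read the additive notation $+$, as the paper does throughout (e.g.\ $\Rset$, $\Cset$, and the Galvin--Mycielski--Solovay setting), as indicating that $\group$ is an abelian group. In an abelian group $x+g=g+x$, so left translations coincide with right translations. Hence a left-invariant metric $d$ is automatically two-sided invariant: $d(x+g,x'+g)=d(g+x,g+x')=d(x,x')$.

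\emph{Step 1: the operation is $1$-Lipschitz.} Let $f\colon X\times Y\to\group$, $f(x,y)=x+y$, where $X\times Y$ carries the $\el1$-metric used in Theorem~\ref{SMZ}. Insert the intermediate point $x+y'$ and use the triangle inequality:
\[
  d(x+y,x'+y')\leq d(x+y,x+y')+d(x+y',x'+y').
\]
The first term equals $d(y,y')$ by left invariance. The second term equals $d(x,x')$ by invariance under translation by $y'$, which holds by the remark above. Hence $d(f(x,y),f(x',y'))\leq d_{X\times Y}((x,y),(x',y'))$. Consequently, for every $A\subs X\times Y$ we have $\diam f(A)\leq\diam A$.

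\emph{Step 2: $1$-Lipschitz images preserve (sharp) $S$-domination.} Let $Z$ be $S$-dominated, $s\in S$, and let $\seq{A_n}$ be an $s$-fine cover of $Z$. Then $\seq{f(A_n)}$ covers $f(Z)$, and $\diam f(A_n)\leq\diam A_n<s_n$, so it is again $s$-fine. Now suppose in addition that $\seq{A_n}$ is $\gamma$-groupable, with witnessing finite sets $\{F_k\}$. If $z=f(w)$, then $w\in\bigcup_{n\in F_k}A_n$ for all but finitely many $k$. Hence $z\in\bigcup_{n\in F_k}f(A_n)$ for the same $k$, so the same $F_k$ witness that $\seq{f(A_n)}$ is $\gamma$-groupable.

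\emph{Step 3: conclude.} For (i), Theorem~\ref{SMZ}(i) gives that $X\times Y$ is $S$-dominated, so $X+Y=f(X\times Y)$ is $S$-dominated by Steps 1 and 2. For (ii), Theorem~\ref{SMZ}(ii) gives that $X\times Y$ is sharply $S$-dominated, and Steps 1 and 2 again transfer this to $X+Y$.

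The only genuine obstacle is Step 1, namely controlling the term $d(x+y',x'+y')$. This is where commutativity, implicit in the $+$ notation, is used to turn left invariance into invariance under the right translation by $y'$. Everything else is a direct application of Theorem~\ref{SMZ}.
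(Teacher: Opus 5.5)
Your proof is correct and follows the paper's own argument: addition is $1$-Lipschitz from the $\el1$-product, $1$-Lipschitz maps preserve (sharp) $S$-domination, and Theorem~\ref{SMZ} then gives both parts. You also point out that left invariance alone does not control $d(x+y',x'+y')$ and that commutativity (or two-sided invariance) is needed; the paper's one-line proof uses this assumption without saying so.
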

\begin{proof}
Since the metric on $X\times Y$ is $\el1$-metric, 
the mapping $(x,y)\mapsto x+y$ is $1$-Lipschitz. The statements therefore follow 
at once from Theorem \ref{SMZ} and the obvious fact that $1$-Lipschitz maps preserve 
$S$-dominated and also sharply dominated sets.
\end{proof}
Let us point out that Theorem~\ref{SMZ} and Corollary~\ref{SMZ2} cannot be improved. 
Both ``Sharply dominated $\times$ \smz{} is sharply dominated'' and 
``Dominated $\times$ \smz{} is dominated'' consistently fail:
\begin{prop}\label{counter1}
Suppose the Continuum Hypothesis. Let $S\subs\SEQ$.
\begin{enum}
\item If $\Rset$ is not $S$-dominated, then there are a sharply $S$-dominated set 
$X$ and a \smz{} set $Y$ such that $X\times Y$ is not sharply $S$-dominated.
\item There is an $S$-dominated set $X$ and a \smz{} set $Y$ such that
$X\times Y$ is not $S$-dominated.
\end{enum}
\end{prop}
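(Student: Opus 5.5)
The plan is to use the $1$-Lipschitz sum map $(x,y)\mapsto x+y$ on $\Rset\times\Rset$ with the $\el1$-metric, exactly as in the proof of Corollary~\ref{SMZ2}. If $X+Y=\Rset$ and $\Rset$ is not $S$-dominated, then $X\times Y$ is not $S$-dominated. Since sharp domination implies domination, $X\times Y$ is then not sharply $S$-dominated either. So the main task is a CH construction, in the spirit of Galvin--Mycielski--Solovay and of Example~\ref{prodex}(iii), of a pair $X,Y\subs\Rset$ with $X+Y=\Rset$, where $Y$ is \smz{} and $X$ is as small as required.

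For (i), assume $\Rset$ is not $S$-dominated. I would make $X$ a set of sharp measure zero, so that $X$ is sharply $\SEQ$-dominated and a fortiori sharply $S$-dominated. Under CH, enumerate $\Rset=\{r_\alpha:\alpha<\omega_1\}$. Enumerate the $\sigma$-compact meager sets $\{M_\alpha:\alpha<\omega_1\}$. Also enumerate the ``test'' sets for \smz{}, namely open sets $\bigcup_n U_n$ with $U_n$ basic intervals of prescribed lengths $t_n$; under CH there are $\omega_1$ such families. Then build by recursion $x_\alpha,y_\alpha$ with $x_\alpha+y_\alpha=r_\alpha$, arranged as follows.
\begin{itemyze}
\item For each $\beta$, all but countably many $x_\alpha$ lie in a fixed $\sigma$-compact set that is $\whs$-dominated for the relevant $s$. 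By Proposition~\ref{sharp3}(i), this yields sharp domination after absorbing the countably many exceptions using the \si ideal property in Proposition~\ref{sharp3}(ii).
\item For each $\beta$, all but countably many $y_\alpha$ avoid $M_\beta$, or are covered by a $t$-fine cover chosen at stage $\beta$; this yields \smz.
\end{itemyze}
At stage $\alpha$ we need $x_\alpha$ in a comeager-type ``small'' set $K_\alpha$ (a countable intersection of prescribed sets chosen earlier) and $y_\alpha=r_\alpha-x_\alpha$ in another such set $L_\alpha$. This requires $K_\alpha\cap(r_\alpha-L_\alpha)\neq\emptyset$. I would get it by taking $K_\alpha$ to be a dense $G_\delta$ built so that it is contained in a $\sigma$-compact $\whs$-dominated set for all $s$ from a countable list. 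This forces us to use \ssmz-type dense $G_\delta$ witnesses, namely the complements of meager sets, in the Galvin--Mycielski--Solovay fashion.

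For (ii) no assumption on $\Rset$ is made, so I would split into two cases. If $\Rset$ is not $S$-dominated, the same construction with $X$ merely \smz{} works: it is the standard Example~\ref{prodex}(iii) construction of an \smz{} set $X$ with $X+X=\Rset$, and we take $Y=X$. If $\Rset$ is $S$-dominated, I would instead use the Kwela phenomenon of Example~\ref{prodex}(ii). Pick $s\in S$, take an $\closs$-dominated Kwela-type set $X$ (placed in a suitable space where it is in fact $S$-dominated), and take for $Y$ any uncountable \smz{} set, which exists under CH. Corollary~\ref{produn} then shows that $X\times Y$ is not $\closs$-dominated, hence not $S$-dominated.

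The main obstacle is the recursion step in (i). One must guarantee simultaneously that $x_\alpha$ lies in a small, sharply dominated $\sigma$-compact set and that $y_\alpha$ avoids the current meager set. A sharply dominated set is itself meager, so intersecting it with a translate of a comeager set is not automatic. I would first try to choose the $\sigma$-compact witnesses for $X$ as closed nowhere dense sets $C$ satisfying $C-M\neq\Rset$ for every meager $M$, which is the ``perfectly meager''-type sum property, and verify this via a Cantor-scheme argument. In (ii), the delicate point is making the Kwela set $S$-dominated for all of $S$ rather than only $\closs$-dominated.
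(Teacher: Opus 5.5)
\textbf{Part (i) has a genuine gap: the pair you are trying to build does not exist.} The problem is not only the recursion step you flag. Theorem~\ref{SMZ}(i) and Corollary~\ref{SMZ2}(i), proved just before this proposition, say that if $X$ is sharply $S$-dominated and $Y$ is \smz{}, then $X\times Y$ and $X+Y$ are $S$-dominated.

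\begin{itemyze}
\item When $\Rset\notin\DD(S)$, you can therefore never have $X+Y=\Rset$ for such a pair.
\item In (i) the product is \emph{always} $S$-dominated. It can only fail to be \emph{sharply} $S$-dominated, so your strategy of showing it is not even $S$-dominated is doomed.
\item Your choice of an $X$ with \ssmz{} makes things worse. Taking $S=\SEQ$ in Theorem~\ref{SMZ}(i), $X\times Y$ and hence $X+Y$ have strong measure zero, so $X+Y$ is null.
\end{itemyze}

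So the recursion cannot be completed. It must break exactly where you need $x_\alpha\in K_\alpha$ with $r_\alpha-x_\alpha$ outside the meager sets listed so far. No Cantor-scheme choice of the witnesses $C$ can repair this.

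The missing idea is to use Corollary~\ref{SMZ2}(i) as the obstruction instead of trying to beat it, and to make the sharply dominated factor trivial.
\begin{itemyze}
\item Take the singleton $\{0\}$ as the sharply $S$-dominated set.
\item Take as the \smz{} set Fremlin's CH set $Z\subs\Rset$ with $Z+Z=\Rset$; you yourself invoke this set in (ii).
\item Then $\{0\}\times Z$ is isometric to $Z$, and $Z$ is not sharply $S$-dominated. Otherwise Corollary~\ref{SMZ2}(i), applied to the pair $Z,Z$, would make $\Rset=Z+Z$ an $S$-dominated set.
\end{itemyze}
This is the paper's whole proof of (i).

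\textbf{Part (ii).} Your first case ($X=Y=Z$ together with the $1$-Lipschitz sum map) is correct. It is a genuinely different route from the paper's and works for every $S$ with $\Rset\notin\DD(S)$. The paper's Kwela route does not need that assumption, and uses CH only to get an uncountable \smz{} set.

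Your second case is the paper's argument: a Kwela set times an uncountable \smz{} set, via Corollary~\ref{produn}. It only works when $S$ is comparable to $\closs$ in both directions.
\begin{itemyze}
\item The Kwela set is $S$-dominated if each member of $S$ lies above some $s^{\mult k}$.
\item ``Not $\closs$-dominated'' yields ``not $S$-dominated'' only if each $s^{\mult k}$ lies above some member of $S$.
\end{itemyze}
The paper's ``$S$-Kwela set'' tacitly means $S=\closs$. For arbitrary $S$ (e.g.\ $S=\emptyset$), statement (ii) cannot hold as written, so do not try to close that case in full generality.
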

\begin{proof}
(i) Fremlin~\cite[534P]{MR3723040} proved that under CH there is a \smz{} set 
$X\subs\Rset$ such that $X+X=\Rset$. Letting $Y=\{0\}$ it is enough to notice that 
$X$ is not sharply $S$-dominated. Indeed, if it were, then $X+X$ would be, 
by Corollary~\ref{SMZ2}(i), $S$-dominated: a contradiction. 

(ii) Let $X$ be an $S$-Kwela set and $Y$ an uncountable \smz{} set. Then, by Corollary \ref{produn}, we have the assertion.
\end{proof}
\section{Microscopic and porous sets}\label{sec:porous}
We already mentioned Galvin--Mycielski--Solovay Theorem: a set $X\subs\Rset$ is
\smz{} if and only if $X+M\neq\Rset$ whenever $M\subs\Rset$ is meager.
Since \smz{} sets are dominated by $\SEQ$, we wonder if there is an analogous statement for 
the family of sets dominated by some other family of sequences.
In particular, is there any analogous theorem for microscopic sets? 
At first glance it seems that it may be the case: microscopic sets are dominated by geometric sequences; 
the corresponding notion that would parallel meager sets may be porous sets. 
And indeed, as we shall see, the sum of a microscopic set and a porous set is not big enough 
to cover the line. 
We are able to prove this fact in any Polish group equipped with a complete left-invariant metric 
(Theorem~\ref{porous1}) and establish a significantly stronger statement in the Cantor set 
and Euclidean spaces (Theorem~\ref{porous2}).

However, the converse, i.e., that a set $M\subs\Rset$ such that
$M+P\neq\Rset$ for all porous sets is microscopic spectacularly fails (Remarks~\ref{porous13}).

\begin{defn}[see e.g.~{\cite{MR2951635}}]
Let $(X,d)$ be a metric space. A set $A\subs X$ is termed
\begin{itemyze}
\item \emph{$p$-porous at a point}
  $x\in X$ if there is $\eps_x>0$
  such that for any $r\leq\eps_x$ there is $y\in X$ such that
  $B(y,pr)\subs B(x,r)\setminus A$,
\item \emph{porous} if there is $p>0$ such that $A$ is $p$-porous at each point $x\in A$,%
\item \si porous if $A$ is a countable union of porous sets.
\end{itemyze}
\end{defn}
\begin{lem}\label{technical}
Let $\group$ be a Polish group equipped with a complete left-invariant metric. 
If $P\subs\group$ is porous, then there is $q>0$ and an increasing union 
$\bigcup_{n\in\Nset}P_n\supseteq P$ such that for each $n$, $P_n$ is closed and
\begin{equation}\label{porous11}
  \forall r\leq 2^{-n}\ \forall x,y\in\group\ \exists z\in\group\ 
  B(z,qr)\subs B(x,r)\setminus(B(y,qr)+P_n).
\end{equation}
\end{lem}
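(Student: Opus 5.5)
The plan is to build the $P_n$ from the scales at which porosity kicks in, and then to verify \eqref{porous11} by a two-case argument using a single hole supplied by porosity.

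\emph{Choice of $P_n$.} Let $p>0$ be a porosity constant of $P$, and for $x\in P$ let $\eps_x$ be as in the definition. Put $Q_n=\{x\in P:\eps_x\geq 2^{-n}\}$. Then $Q_n$ increases and $\bigcup_n Q_n=P$. Let $P_n=\clos{Q_{n+2}}$.

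First I check that $P_n$ is still porous, with constant $p/2$, at every point $a\in P_n$ and for every radius $\rho\leq 2^{-n-1}$. Pick $a'\in Q_{n+2}$ with $d(a,a')<\rho/2$. Porosity at $a'$ with radius $\rho/2\leq 2^{-n-2}$ gives a ball $B(w,p\rho/2)\subs B(a',\rho/2)\setminus P\subs B(a,\rho)$. This ball also misses $P_n$, after replacing it by the concentric open ball, or by shrinking the radius slightly. So the $P_n$ are closed, increasing, cover $P$, and are uniformly porous at all scales $\leq 2^{-n-1}$.

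\emph{Verifying \eqref{porous11}.} Fix $r\leq 2^{-n}$ and $x,y$. By left-invariance, left translation by $-y$ is an isometry. It maps $B(x,r)$ to $B(-y+x,r)$ and $B(y,qr)+P_n$ to $B(0,qr)+P_n$. Hence I may assume $y=0$. I will take $q=p/16$, say.

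Case 1: no point of $P_n$ lies within $r/2$ of $x$, i.e. $B(x,r/2)\cap P_n=\emptyset$. Then $z=x$ should work, since $B(x,qr)$ stays far from the set $B(0,qr)+P_n$.

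Case 2: there is $a\in P_n$ with $d(x,a)\leq r/2$. Apply the uniform porosity of $P_n$ at $a$ with radius $r/4$. This gives $w$ with $B(w,pr/8)\subs B(a,r/4)\setminus P_n\subs B(x,r)$. Let $z=w$. The ball $B(z,qr)$, with $q\leq p/16$, sits inside $B(w,pr/8)$ and leaves a margin of width $pr/16\geq qr$ to $P_n$.

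\emph{Main obstacle.} The hardest step is turning ``distance $\geq qr$ from $P_n$'' into ``disjoint from $B(0,qr)+P_n$''. With only a left-invariant metric, $u+a$ for small $u$ is close to $a$ only through $d(u+a,a)$, which is controlled by right translations. I would handle this in one of two ways. Either I read the sum set with the small ball acting on the side on which the metric is invariant, so that $P_n+B(0,qr)=\bigcup_{a\in P_n}B(a,qr)$ is exactly the $qr$-neighbourhood. Or, if the order in the statement must be kept, I use that each $P_n$ may be taken compact after a further countable decomposition (Polish group, closed porous sets). Then $u\mapsto u+a$ is uniformly equicontinuous at $0$ over $a\in P_n$, so for $r\leq 2^{-n}$ small enough the set $B(0,qr)+P_n$ lies in a $2qr$-neighbourhood of $P_n$. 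Re-indexing $n$ and halving $q$ absorbs this. In both cases the margin argument above finishes the proof.
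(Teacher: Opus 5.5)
Your argument is correct and is essentially the paper's proof. The paper takes the same sets $\{x\in P:\eps_x\geq 2^{-n}\}$ and cites Lemmas 3.4 and 3.5 of \cite{MR2951635} for exactly the two facts you verify by hand: closures of these sets stay porous, and your two-case argument gives a hole of radius proportional to $r$ in every ball $B(x,r)$ with $x\in\group$, not only with $x\in P_n$. It then translates by $y$ and halves the constant, as you do.

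At your ``main obstacle'' the paper simply uses that the metric is invariant (the group is written additively). Then $B(y,qr)+P_n=\bigcup_{a\in P_n}B(y+a,qr)$, so your first reading is the statement itself. Discard your second workaround. Closed porous sets in a non-locally-compact Polish group need not be $\sigma$-compact; a closed hyperplane in $\el2$ is an example. Also, equicontinuity of $u\mapsto u+a$ at $0$ gives only a modulus of continuity, not the linear bound $d(u+a,a)\leq 2d(u,0)$ you would need.
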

\begin{proof}
Let $p>0$ be the porosity constant of $P$. For each $n$ let $P_n=\{x\in P:\eps_x\geq 2^{-n}\}$
where $\eps_x$ are the radii from the definition of porosity.
\cite[Lemma 3.5]{MR2951635} and its proof yield $p'>0$ such that for all $n$
\begin{equation}\label{porous22}
  \forall r\leq 2^{-n}\ \forall x\in\group\ \exists z\in\group\ 
  B(z,p'r)\subs B(x,r)\setminus P_n
\end{equation}
and \cite[Lemma 3.4]{MR2951635} ensures that we may suppose $P_n$'s closed.
Since the metric of $\group$ is invariant, we also have
\begin{equation}\label{porous3}
  \forall r\leq 2^{-n}\ \forall x,y\in\group\ \exists z\in\group\ 
  B(z,p'r)\subs B(x,r)\setminus(y+P_n).
\end{equation}
Moreover, by \cite[Lemma 3.4]{MR2951635}, we may suppose $P_n$'s closed.
Letting $q=\frac {p'}2$ yields~\eqref{porous11}.
\end{proof}
\begin{thm}\label{porous1}
Let $\group$ be a Polish group equipped with a complete left-invariant metric and $M,P\subs\group$.
\begin{enum}
\item If $M$ is microscopic and $P$ is porous, then $M+P\neq\mathbb{G}$,
\item If $M$ is sharply microscopic and $P$ is \si porous, then $M+P$ is meager in $\group$.
\end{enum}
\end{thm}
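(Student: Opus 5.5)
The plan for (i) is to build a nested sequence of closed balls whose centres converge, by completeness of $\group$, to a point outside $M+P$. For (ii), the same shrinking-ball step will show that each piece of $M+P$ in a suitable countable decomposition misses some ball inside any given ball, hence is nowhere dense. I do not verify every inequality below.

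\emph{Setup for (i).} Apply Lemma~\ref{technical} to $P$. This gives $q\in(0,1)$ and an increasing sequence of closed sets $P_k\supseteq$ with $P\subs\bigcup_k P_k$ satisfying \eqref{porous11}. Fix $r_0>0$ small enough that $q^n r_0\leq 2^{-k}$ whenever $n\geq 2^k$. This is possible since $q^{2^k}2^k$ is bounded. Put $r_n=q^n r_0$.

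\emph{Choosing the cover.} Since $M$ is $\clos\geom$-dominated, choose $j$ with $2^{-jn}<q r_n=q^{n+1}r_0$ for all $n$. Apply Lemma~\ref{basisDomi2} to $s=\geom^{\mult j}$. This yields an $s$-fine sequence $\seq{E_n}$ with $M\subs\bigcap_k\bigcup\{E_n:n\in\parti_k\}$. Discarding empty $E_n$, pick $y_n\in E_n$, so $E_n\subs B(y_n,qr_n)$.

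\emph{Recursion and conclusion of (i).} Start with any $z_0$. At step $n\geq1$, let $k$ be the unique index with $n\in\parti_k$; then $n\geq 2^k$, so $r_n\leq2^{-k}$. By \eqref{porous11} with $x=z_{n-1}$, $y=y_n$, $r=r_n$, there is $z_n$ with
\[
B(z_n,qr_n)\subs B(z_{n-1},r_n)\setminus(B(y_n,qr_n)+P_k),
\]
and $qr_n=r_{n+1}$. The balls are nested with radii tending to $0$, so the centres converge to some $z$ lying in all of them. Suppose $z=m+p$ with $m\in M$ and $p\in P$. Then $p\in P_{k_0}$ for some $k_0$, hence $p\in P_k$ for all $k\geq k_0$. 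Fix any $k\geq k_0$; there is $n\in\parti_k$ with $m\in E_n$. Then $z\in E_n+P_k\subs B(y_n,qr_n)+P_k$, contradicting $z\in B(z_n,qr_n)$. So $z\notin M+P$.

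\emph{Setup for (ii).} Write $P=\bigcup_j Q_j$ with each $Q_j$ porous, and apply Lemma~\ref{technical} to each $Q_j$. This gives constants $q_j$ and closed sets $Q_{j,k}$, and reduces the claim to showing that $M+Q_{j,k}$ is meager for fixed $j,k$. Write $q=q_j$. Since $M$ is sharply $\clos\geom$-dominated, take $s=\geom^{\mult i}$ with $i$ so large that $s_n<c\,q^{n+1}$, where $c$ is a small constant fixed below. Let $\seq{E_n}$ be an $s$-fine $\gamma$-groupable cover of $M$, witnessed by consecutive finite intervals $F_l$, and set $A_l=\bigcup_{n\in F_l}E_n$. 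Then
\[
M+Q_{j,k}\subs\bigcup_m D_m,\qquad D_m=\bigcap_{l\geq m}(A_l+Q_{j,k}),
\]
so it suffices to show each $D_m$ is nowhere dense.

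\emph{Nowhere density and the main obstacle.} Given a ball $B(x,r)$, pick $l\geq m$ so large that $F_l=[a,b)$ has $a$ large. Run the recursion from (i) starting at $x$ with radii $\rho_n=q^{n-a}\rho$ for $n\in F_l$, where $\rho\leq\min(r,2^{-k})$. Here $c$ and the largeness of $a$ are chosen so that $\diam E_n<q\rho_n$ for $n\geq a$. After the finitely many steps $n\in F_l$ we obtain a ball inside $B(x,r)$ that misses every $E_n+Q_{j,k}$ with $n\in F_l$. That ball misses $A_l+Q_{j,k}\supseteq D_m$, so $D_m$ is nowhere dense. The step I expect to be the main obstacle is this bookkeeping of scales: the radii must start below both $r$ and $2^{-k}$, while the diameters $s_n$ must still fit under $q\rho_n$ throughout $F_l$. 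I would absorb the starting scale by choosing $l$ large, using that $s$ decays faster than any fixed multiple of $q^n$ once $i$ is large.
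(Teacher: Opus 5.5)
Your proof is correct. In (i) you run the paper's nested-ball argument and differ only in bookkeeping. The paper takes a $\lambda$-cover of $M$ by balls $B(y_n,q^{n+1})$ (with $q\leq\frac12$) and removes $B(y_n,q^{n+1})+P_n$ at step $n$, so the porosity index simply grows with the step. You instead tie $P_k$ to the block $\parti_k$ via Lemma~\ref{basisDomi2}, and rescale by $r_0$ so that $r_n\leq 2^{-k}$ for $n\in\parti_k$.

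In (ii) your route is genuinely different. The paper replaces $M$ by a $\sigma$-compact $\wh\geom$-dominated hull $F\supseteq M$ (Proposition~\ref{sharp3}(i)). It then reruns the infinite construction of (i) inside every open set to see that $F+P$ has dense complement, and finally needs $F+P$ to be $F_\sigma$. You use the $\gamma$-groupable cover directly: you write $M+Q_{j,k}\subs\bigcup_m\bigcap_{l\geq m}(A_l+Q_{j,k})$ and show each piece is nowhere dense with a \emph{finite} run of the recursion over one block $F_l$. This makes (ii) self-contained. No limit point is needed, so completeness enters only through Lemma~\ref{technical}. You also need neither the $\sigma$-compact hull, nor closedness of the porous sets, nor any descriptive information about $F+P$.

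That last point is a real gain. The paper justifies it by calling $F+P$ a Lipschitz image of a product of two $\sigma$-compact sets, which tacitly assumes the closed porous set $P$ is $\sigma$-compact. This is false in non-locally compact groups: a closed hyperplane in $\el2$ is porous but not $\sigma$-compact. The step has to be repaired by noting that compact plus closed is closed, so $F+P$ is $F_\sigma$. The paper's route, in turn, is shorter on the page because it reuses (i) verbatim.

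Finally, the scale bookkeeping you flagged as the main obstacle is harmless, and the constant $c$ is superfluous. The requirement $c\,q^{n+1}\leq q\rho_n=q^{n+1-a}\rho$ reduces to $c\,q^{a}\leq\rho$, which does not depend on $n$ and holds once $a=\min F_l$ is large.
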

\begin{proof}
(i) Using recursively the above Lemma~\ref{technical} yields for each $n$
\begin{equation}\label{porous4}
  \forall x,y\in\group\ \exists z\in\group\ 
  B(z,q^{n+1})\subs B(x,q^n)\setminus(B(y,q^{n+1})+P_n).
\end{equation}
Since $M$ is microscopic, there is a $\lambda$-cover $\{B(y_n,q^{n+1}):n\in\Nset\}$ of $M$ 
by balls of radii $q^{n+1}$.
So, using recursively \eqref{porous4}, we get a sequence of balls $\seq{B(x_n,q^n)}$ satisfying
\begin{equation}\label{porous5} 
  B(x_{n+1},q^{n+1})\subs B(x_n,q^n)\setminus(B(y_n,q^{n+1})+P_n).
\end{equation}
Since the sequence $\seq{B(x_n,q^n)}$ is decreasing, there is a unique point 
$x\in\bigcap_{n\in\Nset}B(x_n,q^n)$. It follows from \eqref{porous5} that 
$x\notin\bigcup_{n\in\Nset}B(y_n,q^{n+1})+P_n$ and since $\{B(y_n,q^{n+1}):n\in\Nset\}$ 
is a $\lambda$-cover of $M$, $\bigcup_{n\in\Nset}B(y_n,q^{n+1})+P_n\supseteq M+P$.
We conclude that $x\notin M+P$, as required.

(ii) By~\cite[Lemma 3.4]{MR2951635} every porous set has a porous closure. 
We may thus suppose that $P$ is closed porous.
Proposition~\ref{sharp3}(i) yields a \si compact set $F\supseteq M$ that is $\wh\geom$-dominated.

Let $U$ be a nonempty open set. We will pick a point $x_U\in U\setminus(F+P)$.
Start with a compact ball $B(x_0,q)\subs U$, with $q$ small enough to satisfy~\eqref{porous11}.
Then proceed as in the proof of part (i), with $F$ in place of $M$. It is possible, since $F$ 
is $\wh\geom$-dominated. This construction yields the point $x_U\in U\setminus(F+P)$.
The set $\{x_U:U\subs\group\text{ open}\}$ is clearly dense and disjoint from $F+P$.
Since $F+P$ is a Lipschitz image of the product $F\times P$ of two \si compact sets,
it is also \si compact. Hence it is meager.
\end{proof}
The above proof is combinatorial. On the real line, 
one can employ fractal dimensions to get a somewhat stronger outcome. 
Let $\hdim$ and $\pdim$ denote the Hausdorff and packing dimension. 
The following lemma follows easily from the Howroyd's \cite{MR1362951} inequality 
$\hdim X\times Y\leq\hdim X+\pdim Y$ that holds for all
metric spaces and the fact that the mapping $(x,y)\mapsto x+y$ is Lipschitz for any
invariant metric in a metric group.
\begin{lem}\label{dim}
Let $M,P$ be sets in a separable metric group $\group$ equipped with an invariant metric.
If $\hdim M=0$, then $\hdim(M+P)\leq\pdim P$.
\end{lem}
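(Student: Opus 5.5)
The statement reduces to two facts. First, $M\times P$ is mapped onto $M+P$ by the Lipschitz map $(x,y)\mapsto x+y$. Second, Howroyd's inequality $\hdim(M\times P)\leq\hdim M+\pdim P$ holds in all metric spaces.

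I will equip $\group\times\group$ with the $\el1$-metric $d((x_1,y_1),(x_2,y_2))=d(x_1,x_2)+d(y_1,y_2)$, as in Section~\ref{sec:products}. For the sum map $\sigma(x,y)=x+y$, invariance of the metric gives
\[
  d(x_1+y_1,x_2+y_2)\leq d(x_1+y_1,x_1+y_2)+d(x_1+y_2,x_2+y_2)=d(y_1,y_2)+d(x_1,x_2).
\]
The first term equals $d(y_1,y_2)$ by left invariance. The second equals $d(x_1,x_2)$ by right invariance, which is why the lemma assumes a two-sided invariant metric. So $\sigma$ is $1$-Lipschitz.

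Since Lipschitz maps do not increase Hausdorff dimension (a cover $\{E_n\}$ of $M\times P$ is sent to a cover $\{\sigma(E_n)\}$ of $M+P$ with $\diam\sigma(E_n)\leq\diam E_n$), we get $\hdim(M+P)=\hdim\sigma(M\times P)\leq\hdim(M\times P)$.

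Applying Howroyd's inequality~\cite{MR1362951} to the factors $M$ and $P$ with the $\el1$-metric, and using $\hdim M=0$, gives $\hdim(M\times P)\leq 0+\pdim P$. Chaining the two inequalities yields $\hdim(M+P)\leq\pdim P$.

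The only point needing care is whether Howroyd's inequality is stated for the $\el1$ product metric. It is stated for an arbitrary product metric, and any two of the standard product metrics ($\el1$, $\max$, $\el2$) are bi-Lipschitz equivalent. Hausdorff and packing dimension are bi-Lipschitz invariants, so the choice of product metric is immaterial. Separability of $\group$ is needed only so that the dimensions are defined as in~\cite{MR1362951}.
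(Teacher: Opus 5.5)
Your proposal is correct and is essentially the paper's own argument: the paper obtains the lemma directly from Howroyd's inequality $\hdim(X\times Y)\leq\hdim X+\pdim Y$ together with the fact that $(x,y)\mapsto x+y$ is Lipschitz for an invariant metric. Your explicit check that the sum map is $1$-Lipschitz for the $\ell^1$ product metric, which uses both left and right invariance, and your remark that the choice of product metric is immaterial up to bi-Lipschitz equivalence, fill in details the paper leaves implicit.
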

This lemma immediately yields:
\begin{thm}\label{porous2}
Let $X$ be $\Cset$, $\Rset$ or $\Rset^n$. Let $P\subs X$ be \si porous.
\begin{enum}
\item If $M\subs X$ is microscopic, then $M+P$ is a null set.
\item If $M\subs X$ is sharply microscopic, then $M+P$ is an $F_\sigma$ null set.
\end{enum}
\end{thm}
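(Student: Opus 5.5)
The plan is to combine Lemma~\ref{dim} with the fact that porous sets in $\Cset$, $\Rset$, $\Rset^n$ are small in the packing sense. Recall that in these spaces a porous set has upper box (Minkowski) dimension strictly less than the dimension $D$ of the ambient space; here $D=n$ for $\Rset^n$ and $D=1$ for $\Cset$ with its standard ultrametric. This is the classical estimate: a $p$-porous set $A$ satisfies $\ubdim(A\cap B)\leq D-c(p)$ for bounded $B$, with $c(p)>0$. I will use it in the following form. Since $\pdim$ is the countably stable regularization of $\ubdim$, and porosity passes to closures (\cite[Lemma 3.4]{MR2951635}) and to bounded pieces, every \si porous $P\subs X$ is a countable union $\bigcup_j P_j$ with $\pdim P_j<D$. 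Stability of $\pdim$ then gives the bound for $P$ piece by piece. I would also note that $\pdim P$ itself may equal $D$, so working piecewise is necessary.

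For (i), a microscopic set is $\clos\geom$-dominated. By Theorem~\ref{versus0}, applied with gauge $\phi(r)=r^\alpha$ and $s=\geom$ (for any $\alpha>0$, $\phi\comp\geom\in\el1$), it has $\hm^\alpha$-measure zero for every $\alpha>0$, so $\hdim M=0$. Lemma~\ref{dim} then gives $\hdim(M+P_j)\leq\pdim P_j<D$. Hence $M+P_j$ is $D$-dimensional Lebesgue (resp.\ Haar) null, and so is the countable union $M+P=\bigcup_j(M+P_j)$.

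For (ii), Proposition~\ref{sharp3}(i) supplies a \si compact $\wh\geom$-dominated $F\supseteq M$. Its Hausdorff dimension is still $0$: the $\hm^\alpha$-sums over the tails $\{E_n:n\geq m\}$ of a $\geom$-fine cover tend to $0$, which is the same estimate as in Theorem~\ref{versus0}. Replace each $P_j$ by its closure, which is still porous and has the same upper box dimension. Then $F+\clos{P_j}$ is a continuous image of a \si compact set, hence \si compact, so $F_\sigma$. By part (i)'s argument it is null, and $M+P\subs\bigcup_j(F+\clos{P_j})$, an $F_\sigma$ null set.

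The main obstacle is the porosity--dimension estimate, specifically $\ubdim<D$ with a uniform gap for porous sets. I expect to cite it from the literature (Salli; Mattila's book) rather than prove it. If a self-contained argument were required, I would first try counting dyadic cubes: at each scale, porosity removes a fixed fraction of the subcubes, giving $N(2^{-k})\leq C\,(2^D-1)^{k}$.
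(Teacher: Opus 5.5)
Your proof follows the paper's route. For (i) you use Salli's dimension bound for porous sets, $\hdim M=0$ (via Theorem~\ref{versus0}), Lemma~\ref{dim} and countable unions. For (ii) you use the $\sigma$-compact $\wh\geom$-dominated hull from Proposition~\ref{sharp3}(i) and closed pieces of $P$. You also treat $\Cset$ explicitly, which the paper leaves implicit. Like the paper, you actually show that $M+P$ is \emph{contained in} an $F_\sigma$ null set, which is the only tenable reading of (ii): with $P=\{0\}$ and $M$ a non-Borel subset of an uncountable compact microscopic set, $M+P$ is not $F_\sigma$. Your insistence on working piece by piece is right, since a $\sigma$-porous set may have packing dimension $D$. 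The paper's ``$\hdim(K\times P)<n$'' has to be read piecewise too.

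One ingredient is misstated, however, and as written it breaks (ii). The bound $\ubdim(A\cap B)\leq D-c(p)$ holds only for \emph{uniformly} porous sets, i.e.\ when the radii $\eps_x$ in the definition are bounded below on $A$. The same restriction applies to ``porosity passes to closures''. For the pointwise notion used in the paper both claims fail. Take a Cantor set $C\subs[0,1]$ of positive Lebesgue measure, and let $A$ be the set of left endpoints of its complementary intervals. For $x\in A$ and $r$ at most the length of the gap starting at $x$, the ball $B(x+\frac r2,\frac r4)$ lies in that gap. So $A$ is $\frac14$-porous at each of its points, yet $\clos A=C$ and hence $\ubdim A=1$. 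With $M=\{0\}$ and $P_j=A$, your set $F+\clos{P_j}\supseteq C$ is not null.

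The repair is the first step of Lemma~\ref{technical}: write each porous set $A$ as $\bigcup_m A_m$ with $A_m=\{x\in A:\eps_x\geq 2^{-m}\}$. Each $A_m$ has holes at every one of its points at every scale $r\leq 2^{-m}$. So bounded pieces of $A_m$ have $\ubdim\leq D-c(p)$ by Salli's theorem (or your counting), and their closures have the same upper box dimension. After this step your argument goes through verbatim. Note that you only need $\ubdim\clos{P_j}<D$, not porosity of $\clos{P_j}$.
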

\begin{proof}
Work in $\Rset^n$. Suppose that $M$ is microscopic and $P$ is porous. By the result 
of Salli \cite{MR1085645}, $\pdim P<n$. Since every microscopic set has Hausdorff dimension $0$, 
Lemma~\ref{dim} yields $\hdim(M+P)<n$ and in particular $M+P$ is a null set.
Passing to countable unions gives (i).

(ii) Suppose that $M$ is sharply microscopic and $P$ is \si porous. Proposition~\ref{sharp3}(i) 
yields a \si compact set $K\supseteq M$ that is $\wh{\geom}$-dominated. 
It is easy to check that a $\wh{\geom}$-dominated set has Hausdorff dimension $0$. 
Since every \si porous set is a union of countably many closed porous sets, 
the set $K\times P$ is $F_\sigma$ and thus so is $K+P$. 
Moreover, just as in part (i), $\hdim(K\times P)<n$, therefore also $\hdim(K+P)<n$ 
and in particular $K+P$ has Lebesgue measure zero.
\end{proof}
We now show that Theorems~\ref{porous1} and~\ref{porous2} are the best we can get.

Recall that the lower and upper density of $I\subs\Nset$ are, respectively,
\begin{equation}\label{densdef}
  \ldens I=\liminf_{n\to\infty}\frac{\abs{I\cap n}}{n},\quad
  \udens I=\limsup_{n\to\infty}\frac{\abs{I\cap n}}{n}   
\end{equation}
We need the following easy construction. See the text following Definition~\ref{def:infra}
for the unexplained notation. Let $I\subs\Nset$. Define
\[
  C_I=\{x\in\Cset:x\rest(\Nset\setminus I)=0\}.
\]
Then $C_I$ is a compact subset of $\Cset$ whose features are of course affected by the choice of $I$:
\begin{lem}\label{CI}
\begin{enum}
\item $C_I+C_{\Nset\setminus I}=\Cset$,
\item $\hdim C_I=\lpdim C_I=\lbdim C_I=\ldens I$,
\item $\pdim C_I=\ubdim C_I=\udens I$,
\item if $\Nset\setminus I$ is infinite, then $C_I$ is upper porous,
\item if $\Nset\setminus I$ is syndetic, then $C_I$ is lower porous,
\item if $I=\{2^k:k\in\Nset\}\cup\{2^k+1:k\in\Nset\}$, then
$\hdim C_I=0$, but $C_I$ is not microscopic,
\item if $S\subs\SEQ$ is countably determined, then there is a set
$I\subs\Nset$ such that $C_I\cup C_{\Nset\setminus I}$ is $S$-dominated.
\end{enum}
\end{lem}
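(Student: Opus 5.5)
The plan is to treat the seven items one at a time. Throughout, $\Cset$ carries the standard metric $d(x,y)=2^{-\min\{n:x(n)\neq y(n)\}}$, and addition is coordinatewise mod $2$. The key counting fact is that for each $n$ the set $C_I$ meets exactly $2^{\abs{I\cap n}}$ cylinders of length $n$.

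For (i): given $z\in\Cset$, put $x=z\rest I$ (extended by $0$ off $I$) and $y=z\rest(\Nset\setminus I)$ (extended by $0$ off $\Nset\setminus I$). Then $x\in C_I$, $y\in C_{\Nset\setminus I}$, and $x+y=z$.

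For (ii) and (iii), the covering number at scale $2^{-n}$ is $N_n=2^{\abs{I\cap n}}$. Hence the lower and upper box dimensions are $\liminf$ and $\limsup$ of $\abs{I\cap n}/n$, i.e. $\ldens I$ and $\udens I$. For the Hausdorff dimension I would use the natural uniform measure $\mu$ on $C_I$. It gives mass $2^{-\abs{I\cap n}}$ to each cylinder of length $n$ meeting $C_I$, so
\[\liminf_n \log\mu(B(x,2^{-n}))/\log 2^{-n}=\ldens I\]
at every point. The mass distribution principle then gives $\hdim\geq\ldens I$, and $\hdim\leq\lbdim$ gives the reverse inequality. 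For packing dimension, the same local computation with $\limsup$ gives $\pdim C_I\geq\udens I$ for the same reason, and $\pdim\leq\ubdim$ gives equality. The remaining claim $\lpdim C_I=\ldens I$ follows from $\hdim\leq\lpdim\leq\lbdim$. Homogeneity of $C_I$ (every open piece looks like the whole) is what makes the box values pass to the packing ones.

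For (iv) and (v), I would use porosity holes. If $m\notin I$ and $x\in C_I$, then the ball of radius $2^{-m}$ around $x$ contains the cylinder determined by $x\rest m$ followed by a $1$ at position $m$. That cylinder has radius comparable to $2^{-m}$ and misses $C_I$. For upper porosity, infinitely many such $m$ give holes at arbitrarily small scales. For lower porosity, if consecutive elements of $\Nset\setminus I$ are at most $L$ apart, then every scale $2^{-n}$ has such an $m\in[n,n+L]$. This gives a hole of relative size $2^{-L-1}$, a constant porosity ratio.

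For (vi), $\abs{I\cap n}\approx 2\log_2 n$, so $\ldens I=0$ and (ii) gives $\hdim C_I=0$. To show $C_I$ is not microscopic, I would use a counting bound for an arbitrary cover. A set of diameter less than $2^{-kn}$ lies in a single cylinder of length $kn$, so it covers at most $2^{\abs{I\setminus kn}}$-fraction of $C_I$ in the natural measure $\mu$; equivalently, it has $\mu$-mass at most $2^{-\abs{I\cap kn}}\approx (kn)^{-2}$. Taking $k$ suitably (e.g.\ $\eps=2^{-k}$), I need $\sum_n \mu(E_n)<1$ for every $\geom^{\mult k}$-fine sequence. But $\sum_n (kn)^{-2}\cdot c$ is summable and can be made less than $1$ by choosing $k$ large, since the constant scales like $k^{-2}$. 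So no $\geom^{\mult k}$-fine sequence covers $C_I$, and $C_I$ is not $\clos\geom$-dominated. The care needed is in the exact count $\abs{I\cap kn}\geq 2\lfloor\log_2(kn)\rfloor-O(1)$.

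For (vii), fix a countable $\{s^j\}\subs S$ that is coinitial, diagonalized so that the new sequence lies below each $s^j$ eventually. It is enough to dominate by a single $s\in\SEQ$ with $s\leq s^j$ eventually for each $j$ plus a finite shift, since a finite modification is handled by covering by prefixes. So I would build $I$ as a union of blocks $[a_k,b_k)$ alternating membership, with lengths growing fast relative to $s$. On an $I$-block of length $\ell$ ending where the complement block starts at a much larger $c$, the set $C_I$ is covered by $2^{\abs{I\cap c}}$ cylinders of length $c$, each of diameter $2^{-c}$. Choosing the alternation points so that $2^{-c}$ is below the $s$-values of indices up to $2^{\abs{I\cap c}}$, one finite batch of the $s$-fine sequence covers $C_I$ entirely. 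A disjoint later batch symmetrically covers $C_{\Nset\setminus I}$, and concatenation gives an $s$-fine cover of the union. The main obstacle is this recursive choice of block endpoints, which must absorb all countably many $s^j$ at once. I would handle it by a diagonal sequence $s\leq\min_{j\leq n}s^j_n$ for $n$ large and build $I$ against that $s$.
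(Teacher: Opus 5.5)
Your proposal is correct. Items (i)--(v) follow the paper's lines. The paper calls (ii)--(iii) routine, and your box-counting plus mass-distribution argument is the standard way to fill this in. For (v) the paper quotes a tree characterization of lower porosity in $\Cset$, which is equivalent to your direct argument (a hole at some level $m\notin I$ within $L$ steps).

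For (vi) and (vii) your route is genuinely different and more elementary. The paper works through Hausdorff measures and the comparison theorems of \cite{micro1}:
\begin{itemize}
\item in (vi) it shows $2^{\abs{I\cap n}}\geq n^2/4$, deduces $\hm^{\micr^2}(C_I)>0$, and uses $\DD(\clos\geom)\subs\NN(\hm^{\micr^2})$, which holds since $\micr^2\comp\geom\in\el1$;
\item in (vii) it takes gauges $\phi_s\asymp s$ and one gauge $\phi\prec\phi_s$ for all $s\in S$. It builds $I=\bigcup_{n\text{ even}}[f(n),f(n+1))$ with $2^{f(n)}\phi(2^{-f(n+1)})\leq\frac1n$, gets $\hm^\phi(C_I\cup C_{\Nset\setminus I})=0$, and uses $\NN(\hm^\phi)\subs\DD(\closs)$.
\end{itemize}
Your (vi) is this argument unpacked. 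The bound $\mu(E_n)\leq 2^{-\abs{I\cap kn}}=O((kn)^{-2})$ for the uniform measure merges the mass-distribution step with the summability of $\micr^2\comp\geom^{\mult k}$, and needs nothing from \cite{micro1}. Your (vii) uses the same alternating-block shape of $I$ but writes the covers down explicitly. The paper's version is shorter given \cite{micro1} and yields extra information: the set is $\hm^\phi$-null, hence even $\clos S$-dominated. Yours is self-contained, at the cost of some recursive bookkeeping.

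That bookkeeping needs one correction. Your remark that ``a finite modification is handled by covering by prefixes'' is not a valid general principle. An $s$-fine cover need not be $s^j$-fine at the finitely many indices where $s_n>s^j_n$, and $s$-domination does not in general imply $s^{\shift N}$-domination. What you need is domination by every shift of $s$. Your construction gives this if you use the infinitely many alternations and take the diagonal with $s_n\leq\min_{j\leq n}s^j_n$ for all $n$. At stage $k$, suppose $I$ ends at $b$, followed by a gap $[b,c)$ and then an $I$-block $[c,e)$. Require
\begin{itemize}
\item $2^{-c}<s_{k+2^b}$, and
\item $2^{-e}<s_{k+2^b+2^c}$.
\end{itemize}
Then place the two batches at indices above $k$, padding with empty sets. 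The batches are at most $2^b$ cylinders of length $c$ covering $C_I$, and at most $2^c$ cylinders of length $e$ covering $C_{\Nset\setminus I}$. Stage $k$ then yields an $s^j$-fine cover for every $j\leq k$. Simpler still, handle $s^k$ itself at stage $k$ by requiring $2^{-c}<s^k_{2^b}$ and $2^{-e}<s^k_{2^b+2^c}$; then the diagonal sequence is unnecessary.
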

\begin{proof}
(i) For each $x\in\Cset$ let $x_I(n)=x(n)$ if $n\in I$ and $x_I(n)=0$ otherwise.
Then clearly $x_I\in C_I$, $x_{\Nset\setminus I}\in C_{\Nset\setminus I}$ 
and $x_I+x_{\Nset\setminus I}=x$.

(ii) and (iii) are routine.

(iv) If $x\in C_I$ and $n\notin I$, then 
$\seq{x\rest n\concat 1}\cap C_I=\emptyset$. Hence the cones $\seq{x\rest n}$ have,
within $C_I$, holes of half their size on each level $n\notin I$, which is enough.

(v) By~\cite[Lemma 3.7]{MR2951635}
a set $A\subs\Cset$ is lower porous if and only if
\[
\exists n\ \forall p\in\cset\ \exists q\supseteq p\ \abs{q}=\abs{p}+n
  \wedge A\cap\seq{q}=\emptyset.
\]
Applying this to $C_I$ yields (v).

(vi) Let $\micr(r)=\frac{1}{\log(1/r)}$. This gauge was introduced in~\cite{micro1}. 
It is easy to calculate that $\frac{2^{\abs{I\cap n}}}{n^2}\geq\frac14$ for all $n$. 
Therefore, $\varliminf_{n\to\infty}\micr^2(2^{-n})2^{\abs{I\cap n}}>0$ whence
$\hm^{\micr^2}(C_I)>0$ and by~\cite[Corollary 5.8]{micro1}, $C_I$ is not microscopic. 
On the other hand, clearly $\udens(I)=0$, so by (ii) or (iii), $\hdim C_I=0$.

(vii) Since $S$ is countably determined, we may suppose that it is countable. 
For each $s\in S$ choose a gauge $\phi_s\asymp s$ and let
$\phi$ be a gauge such that $\phi\prec\phi_s$ for all $s\in S$.

Build recursively an increasing function $f\in\Pset$ subject to
\begin{equation}\label{C_I}
  2^{f(n)}\phi(2^{-f(n+1)})\leq\frac1n.
\end{equation}
Let $I=\bigcup_{n\in\Nset\text{ even}}[f(n),f(n+1))$.
Then $\abs{I\cap f(n+2)}\leq f(n+1)$ for all $n$ even, hence by~\eqref{C_I}
\begin{align*}
  \hm^\phi(C_I)&\leq\liminf_{n\to\infty}2^{\abs{I\cap f(n+2)}}\phi(2^{-f(n+2)}) \\
  &\leq\liminf_{n\text{ even}}2^{f(n+1)}\phi(2^{-f(n+2)})\leq\liminf_{n\text{ even}}\frac1{n+1}=0.
\end{align*}
On the other hand, $\Nset\setminus I=\bigcup_{n\text{ odd}}[f(n),f(n+1))$, hence
$\hm^\phi(C_{\Nset\setminus I})=0$ by the same reasoning. Letting $C=C_I\cup C_{\Nset\setminus I}$, 
we get $\hm^\phi(C)=0$. Since~\cite[Corollary 5.8]{micro1} yields $\NN(\hm^\phi)\subs\DD(\closs)\subs\DD(s)$
for all $s\in S$, we get that $\NN(\hm^\phi)\subs\bigcap_{s\in S}\DD(s)=\DD(S)$. Therefore, $C\in\DD(S)$.
\end{proof}
\begin{lem}\label{salli}
There exists a symmetric Cantor set $C\subs\Rset$ such that $\pdim C=0$ and yet $C$ is not \si porous. 
\end{lem}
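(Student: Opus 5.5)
Build $C$ as a symmetric Cantor set $C=C(\seq{\lambda_k})$: start with $[0,1]$, and at stage $k$ remove from each of the $2^{k-1}$ current intervals of length $\ell_{k-1}$ a central open gap. Two intervals of length $\ell_k=\lambda_k\ell_{k-1}$ remain, with ratios $\lambda_k\in(0,\tfrac12)$. The requirement $\pdim C=0$ is governed by the upper box dimension. Since $C$ is compact and self-similar in the weak sense that every portion looks like the whole, $\pdim C=\ubdim C=\limsup_k \frac{k\log 2}{-\log \ell_k}$, and this also holds for every portion. So I need $\frac{k}{\log(1/\ell_k)}\to0$, that is, $\ell_k$ decays superexponentially in $k$. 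Failure of \si porosity needs the opposite: the ratios $\lambda_k$ must approach $\tfrac12$ often enough that the gaps become relatively tiny.

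I reconcile these by taking $\lambda_k=\tfrac12(1-\eps_k)$, with $\eps_k\to0$ slowly but $\prod_k(1-\eps_k)$ diverging to $0$. For the box dimension it is not enough that $\lambda_k\to\tfrac12$, because then $\ell_k\approx 2^{-k}$ and the dimension is $1$. I therefore use blocks instead. On most levels I use a tiny ratio $\lambda_k=\delta_k$ with $\delta_k\to0$, which forces $\log(1/\ell_k)/k\to\infty$ and hence $\pdim C=0$. On a sparse set of levels I insert long runs of $m_j$ consecutive levels where $\lambda_k=\tfrac12(1-\eps_j)$, with relative gap $\eps_j\to0$. A run of length $m_j$ only adds about $m_j\log 2$ to $\log(1/\ell)$, so I choose the runs short enough relative to the accumulated $\log(1/\ell)$ of the preceding small-ratio levels that $k/\log(1/\ell_k)$ still tends to $0$. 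Only symmetry and the ratios are specified, so $C$ is symmetric.

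To show $C$ is not \si porous I use a Baire category argument inside $C$. $C$ is compact, so if $C=\bigcup_n A_n$ with each $A_n$ porous, then by Baire some closure $\clos{A_n}$ contains a portion $C\cap J$, where $J$ is a basic interval of some level. By \cite[Lemma 3.4]{MR2951635} $\clos{A_n}$ is still porous with some constant $p>0$. So it is enough to show that no portion $C\cap J$ is $p$-porous at all of its points for any fixed $p>0$. For a point $x\in C\cap J$ and a scale $r$ corresponding to a level inside a run with $\eps_j<p/10$, every subinterval of $B(x,r)$ of length $2pr$ meets $C$. This holds because, within that run, the gaps at the next few levels have relative size at most $\eps_j$, and the basic intervals at a level where the length is comparable to $pr$ are $\eps_j$-densely packed. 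Some care is needed because the run must last about $\log_2(1/p)+O(1)$ levels past the scale $r$; it suffices that $m_j\to\infty$. Since there are runs arbitrarily deep, this contradicts the requirement $r\le\eps_x$.

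The main obstacle is the bookkeeping that makes both requirements hold simultaneously. The runs must satisfy $m_j\to\infty$ for the non-porosity argument, but their total contribution to $k$ must stay negligible compared with $\log(1/\ell_k)$. I would fix the runs first ($m_j=j$, $\eps_j=1/j$) and then choose the gaps between runs so large, with $\delta_k=1/k$ there, that at the end of each gap $\log(1/\ell_k)\ge j^2 k$. One then checks that the $\limsup$ in the box-dimension formula is $0$ along all $k$, including $k$ inside runs.
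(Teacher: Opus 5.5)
Your construction and the packing-dimension bookkeeping are sound, and they follow essentially the paper's idea. The paper uses sparse levels $j=2^k-1$ with relative gap $\del_j=r_j/j$, keeps all deeper gaps below $\del_j$, and front-loads the decay via $r_j=(2j)^{-2j}r_{j-1}$. Your explicit runs with $m_j=j$, $\eps_j=1/j$, separated by long stretches of ratio $1/k$, do the same job, and only $\pdim C\le\ubdim C$ is needed. The problem is in the non-porosity step.

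Write $I_k(x)$ for the level-$k$ basic interval containing $x$, and let run $j$ occupy levels $k_0+1,\dots,k_0+m_j$. You claim that for a point $x\in C\cap J$ and a scale $r$ at any level inside the run, every subinterval of $B(x,r)$ of length $2pr$ meets $C$. As stated this is false. The ball $B(x,r)$ is not contained in the basic interval of length about $r$ containing $x$, so it can meet gaps of \emph{coarser} levels, while your justification only controls the finer ones. Two concrete failures:

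\begin{enum}
\item If $I_{k_0}(x)$ is a right child and $x$ is its left endpoint, then $[x-r,x)$ lies in the level-$k_0$ gap. That gap has length $(1-2\del_{k_0})\ell_{k_0-1}\gg\ell_{k_0}$, so this happens for every $r\le\ell_{k_0}$.
\item If $x$ is the right endpoint of the left child of $I_{k_0}(x)$, take $r=\ell_{k_0+i}$ with $2^i\ge 2j$, which is still deep inside the run. Then $(x,x+r]$ lies in the first in-run gap, of length $\eps_j\ell_{k_0}$.
\end{enum}

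In both cases $B(x,r)\setminus C$ contains a ball of radius $pr$. Such endpoints lie in every portion, so the claim cannot be made for all points or for all in-run scales.

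The repair is to choose both the scale and the point.

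\begin{itemyze}
\item Take $r=\ell_{k_0}/5$, at the \emph{start} of a run.
\item Take $x\in C\cap J$ whose address has digits $0,1$ at levels $k_0+1,k_0+2$ for infinitely many runs. Such points are dense in $C$, and for them $B(x,r)\subseteq I_{k_0}(x)\subseteq J$.
\end{itemyze}

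Every component of $I_{k_0}(x)\setminus C$ has length at most $\max(\eps_j,2^{-m_j})\ell_{k_0}$, which is $<2pr$ once $j$ is large. Also $B(x,r)\setminus\clos{A_n}\subseteq B(x,r)\setminus C$, because $C\cap J\subseteq\clos{A_n}$. So along these scales $r\to0$, this $x$ witnesses that $\clos{A_n}$ is not $p$-porous.

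This is exactly the invariant the paper relies on: all gaps inside $I_{x\rest j}$ have length at most $r_j/j$, applied to balls that stay inside that interval.
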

\begin{proof}
We refer to~\cite[Section 3, Symmetric Cantor sets]{micro1}. The idea is simple: 
our set $C=\cube(r)$ is a symmetric Cantor set induced by a sequence $r=\seq{r_n:n\in\Nset}$ 
that converges to zero rapidly enough to ensure $\pdim C=0$, 
but we occasionally make the gaps tiny enough to destroy porosity.

Let $I=\{3,7,15,31,\dots\}=\{2^k-1:k>1\}$. 
We define $r_n$'s by recursion over elements of $I$ subject to 
the following conditions. Let $\del_n=r_n-2r_{n+1}$ be the gaps in the middle of each interval 
in the Cantor set on level $n$, i.e., of length $r_n$.
\begin{enumerate}
\item[(a)] $r_{n+1}<r_n/2$,
\item[(b)] $r_n\leq n^{-n}$,
\item[(c)] $\forall j\in I\ \del_j=\frac{r_j}{j}$,
\item[(d)] $\forall j\in I\ \forall n>j\ \del_n<\del_j$.
\end{enumerate}
So let $j\in I$ and suppose that $r_n$'s have been set for all $n<j$. Let
\begin{enumerate}
\item[(e)] $r_j=(2j)^{-2j}r_{j-1}$,
\item[(f)] $\del_j=\frac{r_j}{j}$, $r_{j+1}=\frac{r_j-\del_j}{2}$,
\item[(g)] if $j<n<2j$, then $r_{n+1}<r_n/2$,
\item[(h)] but the gaps $\del_n$ are smaller than $\del_j$.
\end{enumerate}
Note that (g) and (h) are easily achieved.

It is clear that (e), (f) and (g) yield (a).

To prove (b), suppose $j\in I$ and $n\in[j,2j+1)$. (e) ensures (b)
for $n=j$. If $n>j$, then by (f) and (g) $r_n<r_j$ and 
by (e) $r_j\leq(2j)^{-2j}\leq n^{-n}$ since $n\leq 2j$, and (b) follows.

(c) is nothing but (f).

As to (d), due to (h) it is enough to show that if $j$ and $j'=2j+1$ are two consecutive elements of $I$, 
then $\del_{j'}<\del_j$, and that follows from (e) and (f) by straightforward calculation.

Now (a) ensures that the set $C$ is indeed a symmetric Cantor set, (b) ensures small packing dimension 
and (c) with (d) destroy porosity: it is rather obvious that
\[
  \pdim C\leq\limsup_{n\to\infty}\frac{\log2^{n+1}}{\log\frac{1}{r_n}}
  \leq\limsup_{n\to\infty}\frac{n+1}{\log n^n}=0,
\]
see, e.g., \cite{MR2834788}.
On the other hand, let $x\in C$ and consider the collection of basic intervals $\{I_{x\rest j}:j\in I\}$. 
Each of these intervals $I_{x\rest j}$ has length $r_j$ and all gaps in this interval have length $\del_n$ 
for some $n\geq j$, which are by (c) and (d) at most $r_j/j$. 
This shows that no open set in $C$ is porous and by the Baire Category Theorem $C$ is not \si porous.
\end{proof}

\begin{rems}\label{porous13}
(i) Assuming CH, there is an $\SEQ$-dominated set (i.e., \smz) $X\subs\Rset$ such that $X+X=\Rset$.
$\vdash$ This is a result of Fremlin~\cite[534P]{MR3723040}, see the proof of Proposition~\ref{counter1}. $\dashv$

(ii) If $S\subs\SEQ$ is countably determined, there is a \si compact $S$-dominated set 
$X\subs\Rset$ such that $X+X=\Rset$.
$\vdash$ By Lemma~\ref{CI}(vi) and (i) there is a compact set 
$C=C_I\cup C_{\Nset\setminus I}$ that is $S$-dominated and $C+C=[0,1]$. Let $X=C+\Nset$. $\dashv$

(iii) There exist a microscopic set $M$ and a porous set $P$ in $\Rset$ such that
$M+P$ is not meager.
$\vdash$ Consider a dense $G_\del$ microscopic set which exists by 
\cite[Theorem 2.4]{micro1} and let $P=\{0\}$.  $\dashv$

(iv) There exists a compact set $M\subs\Rset$ such that $M+P$ is closed null for all \si porous
sets $P$ and yet $M$ is not microscopic.
$\vdash$ Take for $M$ a Cantor set that has Hausdorff dimension 
$0$ but is not microscopic that exists by Lemma~\ref{CI}(vi). Then use Lemma~\ref{dim}. $\dashv$

(v) Likewise, there exists a compact set $P\subs\Rset$ such that $M+P$ is (closed) null for 
all (closed) microscopic sets $M$ and yet $P$ is not \si porous.
$\vdash$ Taking for $P$ a compact set
that has packing dimension $0$ and yet it is not \si porous and using Lemma~\ref{dim} will do.
Such a set exists by Lemma~\ref{salli}. $\dashv$

(vi) There is a compact microscopic set that is not \si porous and therefore the
sum of a compact microscopic set and a compact porous set need not be \si porous.
$\vdash$ By (ii) there is a compact microscopic set $X$ such that $X+X=\Cset$. 
It cannot be \si porous, because it would contradict 
Theorem~\ref{porous2}. $\dashv$

(vii) There is a compact porous set that is not microscopic, and therefore the
sum of a compact microscopic set and a compact porous set need not be microscopic. 
$\vdash$ Let $I$ be the set of even numbers and let 
$X=C_I\cup C_{\Nset\setminus I}$. By Lemma~\ref{CI}(v), the set is porous. 
It cannot be microscopic, because it would contradict 
Theorem~\ref{porous2}. $\dashv$
\end{rems}
Note that (iv) and (v) produce a lot of examples that show that there is no hope for a 
Galvin-Mycielski-Solovay type theorem for microscopic and porous sets. 

\section{Kwela sets: core construction}\label{sec:kwela1}
As mentioned in the introduction, in~\cite{MR3482702}, Kwela proved that the additivity 
of the ideal of microscopic subsets of $\Rset$ is equal to $\omega_1$. 
In this section we extract the combinatorial core of his proof and isolate the notion of a Kwela set.

Lower density is recalled in~\eqref{densdef}.
\begin{defn}\label{def:infra}
Let $X$ be a separable metric space and $S\subseteq\SEQ$, $s\in\SEQ$.
\begin{itemyze}
\item A set $E\subs X$ is \emph{S-infradominated} if for all $s\in S$ there exists $I\subs\nset$ 
with lower density $0$ and an $s$-fine cover $\seq{E_n:n\in I}$ of $E$,
i.e., $\diam E_n<s_n$ for all $n\in I$.
\item A set $E\subs X$ is an \emph{$s$-Kwela set} if it is $\closs$-dominated, 
but not $\closs$-infradominated.
\end{itemyze}
\end{defn}
Recall that a metric $d$ is an \emph{ultrametric} if the triangle inequality reads
$d(x,z)\leq\max(d(x,y),d(y,z))$.
In the construction of Kwela sets we utilize the following simple ultrametric spaces.
Let $A$ be a set (finite or countable and often $A=2$)
and consider the set of all $A$-valued sequences $A^\Nset$.
For $x,y\in A^\Nset$ we denote by $x\wedge y$ the common initial segment of $x$ and $y$.
Thus $\abs{x\wedge y}$ is the length of this initial segment,
i.e., the first $n$ for which $x(n)\neq y(n)$.
Suppose that $r\in\SEQ$.
For $x,y\in A^\Nset$ define $d(x,y)=r_{\abs{x\wedge y}}$. It is easily verified that
$d$ is an ultrametric on $A^\Nset$.
The ultrametric space $(A^\Nset,d)$ is denoted by $\Cube(A,r)$ and termed
a \emph{Cantor cube}. In case when $r_n=\lambda^{n}$ for some constant $\lambda<1$ 
we write $\Cube(A,\lambda)$ instead of $\Cube(A,r)$.
We single out two particular cases: the space $\Cube(\Nset,\frac12)$ (usually identified
with the irrationals) is denoted just by $\Pset$ and the Cantor set $\Cube(2,\frac12)$
is denoted by $\Cset$.

Note also that for any set $A$, $m\in\nset$ and $r\in\SEQ$ the $m$-th Cartesian power
$\Cube(A,r)$ is isometric with $\Cube(A^m,r)$.

The following lemma is easy to prove.
\begin{lem}\label{cubes1}
If $A$ is a finite set and $\phi$ is a gauge, then $\hm^\phi(\Cube(A,r))=0$ if and only if
$\liminf_{n\to\infty}\abs A^n\phi(r_n)=0$.
\end{lem}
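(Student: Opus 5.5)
We prove the two implications separately, working with the cylinders $\seq{p}=\{x\in A^\Nset: p\subs x\}$ for $p\in A^n$. We use that in $\Cube(A,r)$ the cylinder $\seq{p}$ with $\abs p=n$ has diameter exactly $r_n$ when $\abs A\geq2$ (the case $\abs A=1$ is trivial, the space being a point). We also use that every set $E$ with $r_{n+1}\leq\diam E<r_n$ is contained in a single cylinder of length $n+1$, and that cylinder has diameter $r_{n+1}\leq\diam E$. Indeed, all points of $E$ pairwise agree on their first $n+1$ coordinates, since the ultrametric takes only the values $r_k$.

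\emph{Sufficiency.} Suppose $\liminf_n\abs A^n\phi(r_n)=0$. The $\abs A^n$ cylinders of length $n$ cover the space and each has diameter $r_n\to0$. Given $\delta>0$ and $\eps>0$, we choose $n$ with $r_n<\delta$ and $\abs A^n\phi(r_n)<\eps$. Since the definition demands strict inequality $\diam<\delta$, this works directly. The cover then gives $\hm^\phi_\delta\leq\eps$, hence $\hm^\phi=0$.

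\emph{Necessity.} This is the only substantive step. Suppose $\liminf_n\abs A^n\phi(r_n)=c>0$, so that for some $N$ we have $\abs A^n\phi(r_n)\geq c/2$ for all $n\geq N$. Let $\delta=r_N$ and let $\{E_i\}$ be a $\delta$-fine cover. We show $\sum_i\phi(\diam E_i)\geq c/2$.
\begin{itemyze}
\item Replace each $E_i$ by the cylinder $\seq{p_i}$ containing it, with $\abs{p_i}=n_i+1>N$ and $\phi(r_{n_i+1})\leq\phi(\diam E_i)$. Here we use that $\phi$ is nondecreasing; this also needs $E_i$ to have at least two points, and singletons are handled by slightly enlarging them, or by assuming $\phi$ is right-continuous at $0$ with $\phi(0)=0$.
\item By compactness of $A^\Nset$ (valid since $A$ is finite), and since cylinders are open, finitely many of these cylinders already cover. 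It therefore suffices to show that every finite cover by cylinders $\seq{p}$ with $\abs p\geq N$ satisfies $\sum\phi(r_{\abs p})\geq c/2$.
\item For this, assign to each cylinder the mass $\mu(\seq p)=\abs A^{-\abs p}$, the uniform measure, so that the masses of a cover sum to at least $1$. Then
\[
\phi(r_{\abs p})\geq\tfrac c2\abs A^{-\abs p}=\tfrac c2\,\mu(\seq p),
\]
and summing gives at least $c/2$.
\end{itemyze}
This is the mass distribution principle. It yields $\hm^\phi_\delta\geq c/2$, hence $\hm^\phi>0$.

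The main obstacle is the bookkeeping in the necessity direction, namely reducing arbitrary covering sets to cylinders without increasing the gauge sum. This rests on the ultrametric fact that the diameter of any set of at least two points is one of the values $r_k$, attained by its smallest enclosing cylinder. Once that is in place, the rest is the standard mass distribution argument.
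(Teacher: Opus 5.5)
Your proof is correct. The paper gives no proof at all (it only says the lemma is easy), and your argument is the standard one: the $\abs A^n$ cylinders of length $n$ give the upper bound, and the uniform mass $\abs A^{-\abs p}$ on cylinders (mass distribution) gives the lower bound.

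The only loose point is the singletons. Enlarging $\{x\}$ to a cylinder $\seq{x\rest m}$ with $\phi(r_m)$ arbitrarily small is exactly where right-continuity of $\phi$ at $0$ is used, so it is an application of that assumption, not an alternative to it. You can avoid both this step and the compactness step by using countable subadditivity of the uniform product (outer) measure $\mu$ directly: singletons have $\mu$-measure $0$, and every other $E_i$ satisfies $\mu(E_i)\leq\abs A^{-\abs{p_i}}\leq\frac2c\phi(\diam E_i)$.
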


\subsection*{Construction of Kwela set}
In order to construct Kwela sets we first prove three purely combinatorial lemmas.
All trees considered are subtrees of $A^{<\omega}$ for some set $A$ 
that itself is a tree ordered by inclusion.
Notation that we will use when working with trees includes $s\para t$ for compatible nodes,
$s{\perp}t$ for incompatible nodes, $s\concat t$ for concatenation and $\cyl s$ for the cone
$\{t:t\subseteq s\}$. The family of immediate successors of a a node $t$ in the tree $T$ is 
denoted by $\suc_T(t)$.
\begin{lem}\label{spacing}
Let $p\in 4^n$ and let $A\subs\Nset\setminus(n+1)$ be such that $\ldens A>0$.
Then there exists a set $\{p_a:a \in A\}\subs 4^{<\Nset}$ satisfying the following:
\begin{enum}
\item For all $a\in A$, $\abs{p_a}=a$ and $p_a\supseteq p$,
\item $\{p_a:a \in A\}$ is an antichain in $4^{<\Nset}$,
\item For any $I\subs \Nset\setminus (n+1)$ with $\ldens(I)=0$ and any sequence 
$\{s_i:i\in I\}\subs 4^{<\Nset}$ such that $\abs{s_i}=i$ there exists $a\in A$ such that
\[
  i\in I\cap[1,a] \implies s_i\perp p_a.
\]
\end{enum}
\end{lem}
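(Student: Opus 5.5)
The plan is to realize the $p_a$ by a prefix-free coding below $p$ that spreads the $p_a$'s out uniformly, and then to get (iii) by counting. Compatibility of $s_i$ and $p_a$ with $i\leq a=\abs{p_a}$ means $s_i\subseteq p_a$. So each node $s_i$ \emph{kills} exactly those $p_a$ that extend it, and (iii) asks for a single $p_a$ with $a\in A$ that is not killed by any $s_i$ with $i\in I\cap[1,a]$.

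\emph{Construction.} Enumerate $A=\{a_0<a_1<a_2<\dots\}$. Since $A\subs\Nset\setminus(n+1)$, we have $a_j\geq n+1+j$. For $j\in\Nset$ let $w_j\in 3^{<\Nset}$ be the base-$3$ digits of $j$ written least significant digit first, using only the letters $0,1,2$. Put
\[
  p_{a_j}=p\concat w_j\concat 3\concat 0^{k_j},
\]
with $k_j$ chosen so that $\abs{p_{a_j}}=a_j$. This is possible because $n+\abs{w_j}+1\leq n+1+j\leq a_j$, at least for $j\geq1$; small $j$ are handled by letting $w_0$ be empty and checking the first few cases. Then (i) is immediate. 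For (ii), the letter $3$ acts as a terminator: $w_j\concat 3$ and $w_{j'}\concat 3$ are incomparable for $j\neq j'$, so $\{p_a\}$ is an antichain.

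\emph{Key uniformity estimate.} Let $t$ be a node of length $n+\ell$ with $\ell\geq1$, and let $N\in\Nset$. I claim $\abs{\{j<N: t\subseteq p_{a_j}\}}\leq N3^{-\ell}+1$. If $t\not\supseteq p$, no $p_{a_j}$ extends $t$. If $t$ contains the letter $3$ beyond $p$, then $t$ determines $w_j$ and hence $j$, so at most one $j$ qualifies. Otherwise $t$ prescribes the first $\ell$ base-$3$ digits of $j$ (with absent digits read as $0$, i.e.\ the letter-$0$ padding). This fixes $j$ modulo $3^\ell$, giving at most $N/3^\ell+1$ indices below $N$.

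\emph{Counting.} Let $\ldens A=\delta>0$ and fix $\eps<\delta/8$. Since $\ldens I=0$, there are infinitely many $m$ with $\abs{I\cap m}<\eps m$. For all large $m$ we also have $N:=\abs{A\cap m}\geq\delta m/2$, so pick such an $m$. Consider the $N$ nodes $p_{a_j}$ with $j<N$, all of which satisfy $a_j<m$. Each $s_i$ with $i\in I$, $i<m$ kills at most $N3^{-(i-n)}+1$ of them, since $i>n$. Summing over $i\in I\cap m$ gives at most
\[
  N\sum_{\ell\geq1}3^{-\ell}+\abs{I\cap m}\leq \tfrac N2+\eps m<\tfrac N2+\tfrac N4<N.
\]
Hence some $a=a_j$ with $j<N$ is killed by no $s_i$ with $i\in I\cap m\supseteq I\cap[1,a]$, which is (iii).

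The main obstacle is the design step. A naive spine construction fails, since all $p_a$ then share long initial segments that a single $s_i$ kills. The remedy is to place the distinguishing digits of the code immediately after $p$, least significant first. This makes a node of length $n+\ell$ control only a $3^{-\ell}$ fraction of the $p_a$'s, so the geometric series $\sum_{\ell\geq1}3^{-\ell}<1$ absorbs the short $s_i$'s, while the sparseness of $I$ handles the long ones. The fourth letter serves only to make the family prefix-free.
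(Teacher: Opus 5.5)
Your proof is correct. It rests on the same device as the paper's proof: ternary codes written immediately after $p$ and closed off by the fourth letter $3$. With such codes, a node $s_i$ ending inside the code region extends only about a $3^{-(i-n)}$ proportion of the codes, while a node reaching past the terminator is compatible with at most one $p_a$. You differ from the paper in the coding and, as a consequence, in how the counting is organized.

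The paper enumerates $3^{<\Nset}$ by nondecreasing length and puts $q_k\concat3\subs p_{a_k}$. This gives the $3^{-i}$ proportion only on complete levels: at most $3^{n-i}$ of the $3^n$ codes in $L_n$ extend a given $s_i$ with $i\le n$, and within a level the order is arbitrary. So the paper argues in two stages:
\begin{enumerate}
\item it deletes from each level $L_n$ the fewer than $\abs{L_n}/2$ codes hit by some $s_i$ with $i\le n$;
\item it checks that the survivors $A'$ still have lower density at least $\frac16\ldens A$;
\item it notes that each remaining $s_i$ is compatible with at most one $p_a$, $a\in A'$;
\item it chooses $n$ with $\abs{I\cap[1,n]}<\abs{A'\cap[1,n]}$.
\end{enumerate}

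Your least-significant-digit-first base-$3$ coding makes the residue of $j$ modulo $3^\ell$ equidistributed on every initial segment of indices. That yields the bound $N3^{-\ell}+1$ for an arbitrary window, so a single union bound over one well-chosen $m$ does the work of both stages. Your route is shorter, has explicit constants, and avoids the lower-density estimate for $A'$.

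Two cosmetic remarks. First, $\abs{w_j}\le j$ holds for every $j$ once $w_0=\emptyset$, so small $j$ need no separate check. Second, when $t$ has no letter $3$ after $p$, its tail is necessarily an initial segment of $w_j$ itself, because the $0$-padding only comes after the terminator. So the parenthetical about absent digits being read as $0$ is superfluous, although the bound you derive is right.
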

\begin{proof}
We may assume, without loss of generality, that $n=0$ and $p=\emptyset$. 
Let $A\subs\nset$ be a set with positive lower density and let $\{a_k:k\in\Nset\}$ be its increasing enumeration.

Consider now an enumeration of $3^{<\Nset}$ such that $\abs{q_i}\leq\abs{q_j}$ whenever $i<j$. 
Now, for $a_k\in A$, let $p_{a_k}\in 4^{a_k}$ be such that $q_k\concat\,3\subs p_{a_k}$. 
Clearly, the set $\{p_{a_k}:a_k\in A\}$ satisfies the first condition of the statement 
and also the second, since $p_{a_k}\subs p_{a_s}$ implies $q_k=q_s$, and therefore $k=s$. 
    
To show that it also satisfies the third one, let $I$ and $\{s_i:i\in I\}$ be as in (iii) and define, 
for each $n\in\Nset$, the sets
\begin{align*}
  L_n&=\{a_k\in A:q_k\in3^n\}, \\
  A_n&=\{a\in L_n:\exists i\in I\cap[1,n] \ p_a\para s_i\}.
\end{align*}
First we show that for $n\in\Nset$ we have $\abs{A_n}<3^n/2=\abs{L_n}/2$. 
Notice that given $n\in\Nset$ and some $i\in I$ with $i\leq n$, then for any $a_k\in L_n$ we have 
\[
  \abs{s_i}=i \leq n=\abs{q_k}<\abs{p_{a_k}}.
\]
Therefore, if $p_{a_k}\para s_i$, we must have $s_i\subs q_{k}\subs p_{a_k}$. Hence, for $i\leq n$ we get:
\begin{align*}
  \abs{\{a_k\in L_n:p_{a_k}\para s_i\}}&=\abs{\{a_k\in L_n:s_i\subs q_k\subs p_{a_k}\}}\\
  &=\abs{\{q_k\in3^n:s_i\subs q_k\}}=3^{n-i}.
\end{align*}
Thus, 
\[
  \abs{A_n}\leq \sum_{i=1}^n 3^{n-i}=\sum_{i=0}^{n-1} 3^{i}
  =\frac{3^n-1}{2}<\frac{3^n}{2}=\frac{\abs{L_n}}{2}.
\]
Using this last inequality and the fact that 
$\abs{A\cap n}=\abs{(\bigcup_{m\in\Nset}L_m)\cap n}\leq\abs{\bigcup_{m\leq n}L_n}$, 
we can show that the set $A'=\bigcup_{n\in \Nset} (L_n\setminus A_n)$ 
has positive lower density:
\begin{align*}
  \ldens A'&=\varliminf_{n\to\infty}\frac{\abs{A'\cap n}}{n}=\varliminf_{n\to\infty}
  {\frac{\abs{A\cap n}}{n}}\cdot\varliminf_{n\to\infty}\frac{\abs{A'\cap n}}{\abs{A\cap n}}\\
  &\geq\ldens A\cdot\varliminf_{n\to\infty}\frac{\tfrac{1}{2}(\abs{L_0}
  +\ldots+\abs{L_{n-1}})}{\abs{L_0}+\ldots+\abs{L_{n}}}\\
  &=\ldens A \cdot\varliminf_{n\to\infty}\frac{\tfrac{1}{2}(3^n-1)}{3^{n+1}-1} 
  =\tfrac{1}{6}\ldens A>0
\end{align*}
We claim that for each $i\in I$ there is at most one $a\in A'$ such that $s_i\para p_a$.
Assume there are $a_j,a_k\in A'$ such that $s_i\para p_{a_j}$ and $s_i\para p_{a_k}$; 
say, $a_j\in L_n\setminus A_n$ and $a_k\in L_m\setminus A_m$ with $n\leq m$. 
By definition of $A_s$, we get $i>m\geq n$, therefore there exists some $q_j\in 3^n$ 
and $q_k\in 3^m$ such that $s_i\supseteq q_j\concat3$ and $s_i\supseteq q_k\concat3$, 
so we must have $q_j=q_k$, and hence $a_j=a_k$.

To finish the proof notice that, since $\ldens(I)=0<\ldens(A')$, 
we can pick $n\in\Nset$ such that $\abs{I\cap[1,n]}<\abs{A'\cap[1,n]}$. 
Consider now the set 
\[
  B=\{a\in A'\cap[1,n]:\exists i\in I\cap[1,n]\:\:p_a\para s_i\}.
\]
Together with the last claim we get $\abs{B}\leq\abs{I\cap[1,n]}<\abs{A'\cap[1,n]}$.
Consequently, $(A'\cap[1,n])\setminus B\neq\emptyset$, which means that there exists $a\in A'\cap [1,n]$ 
such that for all $i\in I\cap[1,n]$ we have $p_a\perp s_i$, since $a<n$, 
this is true for all $i\in I\cap[1,a]$ and this finishes the proof.
\end{proof}
Recall the partition $\{\parti_k:k\in\Nset\}$, as defined in~\eqref{parti}. 
\begin{lem}\label{treeT}
There is a tree $T\subs\pset$ such that:
\begin{enum}
\item $\{t(n):t\in T, \abs{t}\geq n+1\}=\parti_n$, for all $n\in\nset$.
\item For all $t\in T$, $\ldens(\suc_T(t))>0$ and $\suc_T(t)\cap\suc_T(p)=\emptyset$ for $t\neq p$.
\end{enum}   
\end{lem}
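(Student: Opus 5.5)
The plan is to build $T$ level by level, so that the immediate successors of the nodes of length $n$ take their new values (at coordinate $n$) from $\parti_n$. The whole task then reduces to one combinatorial fact: each $\parti_n$ can be split into countably many pieces, each of positive lower density. We hand one piece to each node of length $n$.

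\emph{Step 1 (splitting $\parti_n$).} Fix $n$ and enumerate $\parti_n$ increasingly as $e_1<e_2<\dots$; explicitly $e_m=2^n(2m-1)$. For $j\in\Nset$ put
\[
  Q_{n,j}=\{e_m:m\in\parti_j\}.
\]
Since $\{\parti_j:j\in\Nset\}$ partitions $\nset$, the sets $Q_{n,j}$, $j\in\Nset$, partition $\parti_n$. Each $Q_{n,j}$ has positive lower density. Indeed, $\abs{Q_{n,j}\cap e_M}=\abs{\parti_j\cap M}$ and $e_M\leq 2^{n+1}M$, so
\[
  \ldens Q_{n,j}\geq 2^{-(n+1)}\ldens\parti_j=2^{-(n+1)}2^{-(j+1)}>0 .
\]

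\emph{Step 2 (building $T$).} Put $\emptyset\in T$ and declare $\suc_T(\emptyset)=\{\seq{a}:a\in\parti_0\}$. Suppose the level $T_n=T\cap\omega^n$ has been built. Every node of $T$ has infinitely many successors, so $T_n$ is countably infinite; enumerate it as $\{t^n_j:j\in\Nset\}$. Then let
\[
  \suc_T(t^n_j)=\{t^n_j\concat a:a\in Q_{n,j}\}.
\]
The only deviation is at the root: $\parti_0$ is used whole there, and it has lower density $\frac12$. (If one prefers $n\geq1$ only, as in (i), nothing changes.)

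\emph{Step 3 (verification).} For (i): a node $t$ with $\abs t\geq n+1$ has $t(n)$ equal to the last entry of $t\rest(n+1)$, which is a successor of some $t^n_j$. So $t(n)\in Q_{n,j}\subs\parti_n$. Conversely, every $a\in\parti_n$ lies in some $Q_{n,j}$, hence occurs as $t^n_j\concat a\in T$. For (ii): positive lower density of the successor sets is exactly Step 1, read as a statement about the sets of values $t(\abs t)$, which is how the sets $\suc_T(t)$ are meant in the statement. For disjointness, two distinct nodes of the same length $n$ receive disjoint sets $Q_{n,j}\neq Q_{n,j'}$. Nodes of different lengths $n\neq n'$ receive subsets of the disjoint sets $\parti_n$ and $\parti_{n'}$.

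The only point requiring any thought is Step 1, that is, finding a partition of a set of positive density into infinitely many sets of positive lower density. Reusing the partition $\{\parti_j\}$ transported along the increasing enumeration of $\parti_n$ settles it, because $\parti_n$ is an arithmetic progression and densities scale by a constant factor.
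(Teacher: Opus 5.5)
Your proof is correct and follows the paper's route: build $T$ level by level, giving each node of length $n$ its own piece of a partition of $\parti_n$ into sets of positive lower density. Your explicit pieces $Q_{n,j}$ supply the partition whose existence the paper only asserts. The one difference is that the paper also requires $\min A_q>\ell(q)$, i.e.\ successor values exceed the node's last entry. This is not part of the statement, but Lemma~\ref{mapT} needs it in order to apply Lemma~\ref{spacing}. Your $Q_{n,j}$ need not satisfy it; you can arrange it by moving the finitely many elements of each $Q_{n,j}$ that are $\leq\ell(t^n_j)$ into the piece of the unique node of length $n$ whose last entry is $2^{n-1}<\min\parti_n$.
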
 
\begin{proof}
Let $\parti_n$ be as in the statement and notice that $\{\parti_n:n\in\Nset\}$ is a partition 
of $\nset$ into sets of positive lower density.
Let $\ell:\Nset^{<\Nset}\to \Nset$ be given by $\ell(q)=q(\abs{q}-1)$. 
    
We define the tree $T$ recursively, level by level: Let $T_0=\emptyset$, $T_1=\parti_0$ and $A_\emptyset=\parti_0$.
Assume the $n$-th level $T_n$ of $T$ has been defined. To define $T_{n+1}$ consider a partition 
of $P_n$ into sets of positive density, say $\{A_q:q\in T_n\}$, such that 
$\ldens(A_q)>0$ and $\min A_q>\ell(q)$.
Let $T_{n+1}=\{q\concat j:q\in T_n, j\in A_q\}$.
Finally define $T=\bigcup_{n\in\Nset} T_n$.
    
Observe that in this way, for $t\in T$, we have $\ldens(\suc_T(t))=\ldens(A_t)>0$ and that 
$\{\ell(t):t\in T_{n+1}\}=\parti_n$, furthermore, for all $n\in\Nset$, $\ell\restriction_{T_{n+1}}$ 
is a bijection of $T_{n+1}$ and $\parti_n$.
\end{proof}

\begin{lem}\label{mapT}
Let $T$ be as in Lemma~\ref{treeT}.
There is a monotone mapping $\varphi:T\to 4^{<\Nset}$, such that for each $a\in\Nset$ 
there exists a unique $p\in \varphi(T)=R$ such that $\abs{p}=a$.
Furthermore, for $p=\varphi(q)$, we have $\abs{p}=q(\abs{q}-1)$.  
\end{lem}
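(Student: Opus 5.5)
We will build $\varphi$ recursively along the levels $T_n$ of the tree $T$ from Lemma~\ref{treeT}, using Lemma~\ref{spacing} at each node. Set $\varphi(\emptyset)=\emptyset\in 4^0$. The requirement $\abs{\varphi(q)}=\ell(q)=q(\abs q-1)$ is forced by the final clause. Since $\ell\rest T_{n+1}$ is a bijection onto $\parti_n$ and $\{\parti_n\}$ partitions $\nset$, the map $q\mapsto\ell(q)$ is then a bijection of $T\setminus\{\emptyset\}$ onto $\nset$. Hence, once $\varphi$ is defined and monotone, each length $a\in\nset$ is attained by exactly one $p\in R$, provided $\varphi$ is injective. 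Injectivity is automatic because distinct nodes have distinct $\ell$-values and hence images of distinct lengths. The length $0$ is attained only by $\varphi(\emptyset)$.

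\emph{Recursive step.} Suppose $\varphi(q)\in 4^{\ell(q)}$ has been defined for some $q\in T$, writing $\ell(\emptyset)=0$. The set $A_q=\suc_T(q)$, identified with the set of last entries $\{j:q\concat j\in T\}$, has positive lower density and satisfies $\min A_q>\ell(q)$. So $A_q\subs\Nset\setminus(\ell(q)+1)$, which is exactly the hypothesis of Lemma~\ref{spacing} with $n=\ell(q)$ and $p=\varphi(q)$. That lemma yields an antichain $\{p_a:a\in A_q\}$ with $\abs{p_a}=a$ and $p_a\supseteq\varphi(q)$. We then set $\varphi(q\concat a)=p_a$. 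This gives $\varphi(q)\subs\varphi(q\concat a)$, so monotonicity follows by induction along branches, and $\abs{\varphi(q\concat a)}=a=\ell(q\concat a)$.

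We also record, for later use, that the images of the immediate successors of each node form an antichain satisfying clause~(iii) of Lemma~\ref{spacing}. This property, rather than anything stated in Lemma~\ref{mapT} itself, is presumably what the subsequent Kwela-set construction exploits.

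The only potential obstacle is bookkeeping. The successor sets $\suc_T(q)$ must satisfy $\min A_q>\ell(q)$, which is built into the proof of Lemma~\ref{treeT}. The root must be handled with $n=0$, and $\parti_0$ has positive lower density, so Lemma~\ref{spacing} applies there as well. With these checks in place the proof is a direct recursion.
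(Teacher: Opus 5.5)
Your proposal is correct and follows the paper's proof essentially verbatim. You set $\varphi(\emptyset)=\emptyset$ and, level by level, put $\varphi(q\concat a)=p_a$, where $\{p_a:a\in\suc_T(q)\}$ comes from Lemma~\ref{spacing} applied to $\varphi(q)$; uniqueness of lengths then follows because the successor sets partition $\nset$. You also check explicitly that $\min A_q>\ell(q)$, so the hypothesis $A\subs\Nset\setminus(n+1)$ of Lemma~\ref{spacing} holds, and you note that the chosen families satisfy clause~(iii), which the proof of Theorem~\ref{KwelaSet} tacitly uses; both are details the paper leaves implicit.
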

\begin{proof}
We define $\varphi:T\to 4^{<\Nset}$ recursively. 
Let $\varphi(\emptyset)=\emptyset$ and suppose that $\varphi(t)$ has been defined 
for all $t\in T_n$. Let $t\in T_{n+1}$, say $t=q\concat j$. 
Applying Lemma~\ref{spacing} to $p=\varphi(q)$ and the set $A_q=\suc_T(q)$ 
(that has positive lower density), we find a set $\{p_a:a\in A_q\}\subs 4^{<\Nset}$, 
satisfying conditions (i) and (ii) of Lemma~\ref{spacing}. 
Now we define $\varphi$ for $t=q\concat j$ as  $\varphi(t)=p_j$. 
It follows from Lemma~\ref{spacing}(i) that
$\varphi(q)\subs\varphi(t)$, and $\abs{\varphi(t)}=j=t(\abs{t}-1)$.

Let $R=\{\varphi(q):q\in T\}$. Finally, recalling that the family 
$\{A_q:q\in T\}$ is a partition of $\nset$,
by letting $p_0=\emptyset$, we can write $R=\{p_a:a\in \Nset\}$, 
where $p_a$ is the unique element in $R$ such that $\abs{p_a}=a$.
\end{proof}
We are ready for the core existence theorem.
\begin{thm}\label{KwelaSet}
For any $s\in\SEQ$, the space $\Cube(2,s)$ contains an $s$-Kwela set.
\end{thm}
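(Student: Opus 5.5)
The plan is to build the set inside $4^\Nset$ from the tree $T$ of Lemma~\ref{treeT} and the map $\varphi$ of Lemma~\ref{mapT}, and then transport it into $\Cube(2,s)$ by the coding $\pi:4^\Nset\to2^\Nset$ that replaces each digit of $4$ by the corresponding pair of binary digits. For $p\in4^{<\Nset}$ we have $\pi[\cyl p]=\cyl{\pi(p)}$ with $\abs{\pi(p)}=2\abs p$. Hence in $\Cube(2,s)$ we get $\diam\pi[\cyl p]\leq s_{2\abs p}<s_{\abs p}$ whenever $\abs p\geq1$. Conversely, if $F\subs\Cube(2,s)$ satisfies $\diam F<s_{2i}$, then any two points of $F$ agree on their first $2i+1$ bits. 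Consequently $\pi^{-1}[F]$ is contained in a single cone $\cyl{\sigma}$ with $\sigma\in4^i$. These two facts are the only metric input.

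\textbf{The set.} Let $R=\{p_a:a\in\Nset\}=\varphi(T)$ be as in Lemma~\ref{mapT}. For every infinite branch $b$ of $T$ the nodes $\varphi(b\rest m)$ form a chain, since $\varphi$ is monotone. Their lengths are $\abs{\varphi(b\rest m)}=b(m-1)$, which strictly increase because $\min A_q>\ell(q)$. So the union of this chain is a point $x_b\in4^\Nset$. Put $E=\{\pi(x_b):b\text{ a branch of }T\}$.

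\textbf{$E$ is $\closs$-dominated.} I would use Lemma~\ref{basisDomi2}. Let $E_a=\pi[\cyl{p_a}]$ for $a\in\nset$. By the metric fact above, $\diam E_a<s_a$, so $\seq{E_a}$ is $s$-fine. By Lemma~\ref{treeT}(i), for each $k$ the value $b(k)$ lies in $\parti_k$, and $x_b\supseteq\varphi(b\rest(k+1))=p_{b(k)}$. Hence $\pi(x_b)\in\bigcup\{E_a:a\in\parti_k\}$ for every $k$, which is exactly the condition of Lemma~\ref{basisDomi2}.

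\textbf{$E$ is not $\closs$-infradominated.} It suffices to refute the definition for the single sequence $s^{\mult2}\in\closs$. Suppose $I\subs\nset$ has $\ldens I=0$ and $\seq{F_i:i\in I}$ is an $s^{\mult2}$-fine cover of $E$, so $\diam F_i<s_{2i}$. By the metric fact, choose $\sigma_i\in4^i$ with $\pi^{-1}[F_i]\subs\cyl{\sigma_i}$. I would then build a branch $b$ recursively with the invariant
\[
  \sigma_i\perp\varphi(b\rest m)\quad\text{for all } i\in I \text{ with } i\leq\abs{\varphi(b\rest m)}.
\]
Given $q=b\rest m$ with $n=\abs{\varphi(q)}$, the successors of $q$ are mapped by $\varphi$ onto the family $\{p_a:a\in A_q\}$ produced by Lemma~\ref{spacing} from $\varphi(q)$ and $A_q$. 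Apply clause (iii) of that lemma to $I\setminus(n+1)$, which still has lower density $0$, and the strings $\sigma_i$. This gives $a\in A_q$ with $\sigma_i\perp p_a$ for all $i\in I\cap(n,a]$. For $i\leq n$ the incompatibility is inherited, since $p_a\supseteq\varphi(q)$. Set $b(m)=a$. Because the lengths tend to infinity, every $i\in I$ is eventually handled. Then $x_b\notin\cyl{\sigma_i}$ for every $i\in I$, so $\pi(x_b)\in E\setminus\bigcup_{i\in I}F_i$, a contradiction.

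\textbf{Main obstacle.} The delicate point is that clause (iii) of Lemma~\ref{spacing} must apply to the very family $\{p_a\}$ that was used to define $\varphi$. Lemma~\ref{mapT} only records clauses (i) and (ii), so I would reopen its construction to note that each successor family is produced by Lemma~\ref{spacing}, and therefore also satisfies (iii). A secondary point is the index bookkeeping: $a=0$ and the empty string are harmless, and the shift by $n+1$ in clause (iii) is exactly what the invariant above handles.
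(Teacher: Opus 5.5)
Your proof is correct and follows the paper's argument. Your set of branch points $\{x_b\}$ coincides with the paper's $K=\bigcap_n\bigcup\{\cyl{p_a}:a\in\parti_n\}$, since each level of $\varphi(T)$ is an antichain. Domination comes from Lemma~\ref{basisDomi2}, and non-infradomination comes from the same recursion along $T$. That recursion uses clause (iii) of Lemma~\ref{spacing} for the very families built in Lemma~\ref{mapT}, a step the paper also uses implicitly.

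The only real difference is the transport to $\Cube(2,s)$. You go there directly via $\pi$ and test infradomination against $s^{\mult2}$. The paper instead proves the result in $\Cube(4,s)$ and then shows the recoding preserves Kwela sets in both directions. Your route has a small bonus: $\diam\pi[\cyl{p_a}]=s_{2a}<s_a$ gives strict fineness, whereas the paper's cones in $\Cube(4,s)$ have diameter exactly $s_a$.
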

\begin{proof}
We first show that $\Cube(4,s)$ contains an $s$-Kwela set.
Let $R\subs 4^{<\Nset}$ be as in Lemma~\ref{mapT}. 
Recall that $R=\{p_a:a\in \Nset\}$ with $\abs{p_a}=a$.
Define $K\subs\Cube(4,s)$ as 
\[
  K=\bigcap_{n\in\Nset}\bigcup\{\cyl{p_a}:a\in \parti_n\}.
\]
We show now that $K$ is an $s$-Kwela set. It is easy to see that Lemma~\ref{basisDomi2} 
ensures that $K$ is an $\closs$-dominated set, since, for $z_a\in\langle p_a\rangle$, 
we have $\cyl{p_a}=B(z_a, s_a)$. 
Therefore, we only need to show that it is not $\closs$-infradominated.
Take $I\subs\nset$, with $\ldens(I)=0$ and suppose that there is a sequence 
$\seq{k_i:i\in I}$ such that the cones $\seq{\cyl{k_i}:i\in I}$ form an $s$-fine cover 
of $K$. Notice that the latter yields $s_{\abs{k_i}}\leq s_i$, thus $\abs{k_i}\geq i$, 
so we may assume without loss of generality that $\abs{k_i}=i$.
We now recursively construct a strictly increasing sequence $\seq{a_n:n\in\Nset}$ such that
\begin{enum}
\item[(a)] For all $n\in\nset$, $p_{a_{n-1}} \subsetneq p_{a_n}$.
\item[(b)] For all $n\in\nset$ and for all $i\in I\cap(a_{n-1}, a_n]$, we have $k_i\perp p_{a_n}$.
\end{enum}
Let $a_0=0$ and suppose that $a_{n-1}$ has been defined. Applying Lemma~\ref{spacing} 
to $p_{a_{n-1}}\in4^{a_{n-1}}$ and the set $A_n=\suc_R(p_{a_{n-1}})\subs\Nset\setminus (a_{n-1}+1)$, 
for the set $I_n=I\setminus (a_{n-1}+1)$, we can find $a_n\in A$ such that 
$\abs{p_{a_n}}=a_n$, $p_{a_n}\supsetneq p_{a_{n-1}}$ and  
$s_i\perp p_{a_n}$, whenever $i\in I\cap(a_{n-1},a_n]$.
    
On the one hand, since $a_0=0$ and $a_n\to\infty$, (a) gives 
$\emptyset\neq\bigcap_{n\in\Nset}\cyl{p_{a_n}}\subs X$, 
on the other hand, (b) implies that for $x\in\bigcap_{n\in\Nset}\cyl{p_{a_n}}$ and $i\in\Nset$, 
we have $x\notin\cyl{ k_i}$.
Therefore, $K$ is not covered by $\{\cyl{k_i}:i\in\Nset\}$: a contradiction.

We now show that the standard bijection that maps $\Cube(2,s)$ onto $\Cube(4,s)$ preserves
Kwela sets in both directions: for $x\in\Cset$ let $\wh x\in4^\Nset$ be defined by 
$\wh x(n)=2x(2n)+x(2n+1)$ for all $n\in\Nset$. It is straightforward that if $x,y\in\Cset$ 
and $d(x,y)\leq s_{2n}$, then $d(\wh x,\wh y)\leq s_n$.
Supposing that $E\subs\Cset$ is $\closs$-dominated, for any $k\in\nset$
there is an $s^{\mult 2k}$-fine cover $\{E_n:n\in\nset\}$ of $E$. 
Hence the family $\{\wh E_n:n\in\nset\}$ is an $s^{\mult k}$-fine cover of $\wh E$. 
It is also straightforward that the inverse of $x\mapsto\wh x$ is $1$-Lipschitz, 
so if $\wh E$ is $\closs$-dominated,
then so is $E$. The invariance of infradominance is proved the same way.
It follows that since there is an $\closs$-Kwela set in $\Cube(4,s)$, there is one also in $\Cube(2,s)$.
\end{proof}
%
\section{Kwela sets: existence theorems}\label{sec:kwela2}
We now turn to the question which spaces contain Kwela sets.  
For reasons that become obvious in the next section we want a bit more.
Say that sets in a metric space are \emph{separated} 
if their lower distance is positive.
\begin{thm}\label{kwela1}
Let $X$ be $\el1$, $\el2$ or $c_0$.
For each $s\in\SEQ$ there are $\cont$ many separated $s$-Kwela sets in $X$.
\end{thm}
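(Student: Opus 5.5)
Theorem~\ref{KwelaSet} already provides an $s$-Kwela set $K\subs\Cube(2,s)$, or equivalently in $\Cube(4,s)$. The plan is to embed a Cantor cube into $X$ in such a way that $K$ and $\cont$ many disjoint copies of it land in separated positions. The embedding must not spoil either half of the Kwela property. Domination is preserved by Lipschitz maps, while non-infradomination is preserved by maps whose inverse is Lipschitz. So we want a bi-Lipschitz embedding of $\Cube(2,s)$ into $X$, possibly up to a change of index by a constant multiplicative factor. Such a shift is harmless: $\closs$ and the lower-density-zero condition are both invariant under passing from $s$ to $s^{\mult k}$, exactly as in the last paragraph of the proof of Theorem~\ref{KwelaSet}.

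\emph{Construction of the embedding.} Let $e_n$ be the unit vectors. For $c_0$, define $f:\Cube(2,s)\to c_0$ by $f(x)=\sum_n x(n)s_n e_n$. If $\abs{x\wedge y}=m$, then $\norm{f(x)-f(y)}_\infty=\max_{n\geq m}s_n\abs{x(n)-y(n)}=s_m$, because $s$ is decreasing. Hence $f$ is an isometry. For $\el1$ and $\el2$ a plain sum overshoots, since $\sum_{n\geq m}s_n$ may be much larger than $s_m$ or even infinite. To fix this, first pass to a subsequence, or replace $s$ by a smaller sequence $r\leq s$ with $r_{n+1}\leq r_n/2$. A natural choice is $r_n=\min(s_n,2^{-n}s_1)$, reindexed suitably. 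Then $\sum_{n\geq m}r_n\leq 2r_m$, so $f(x)=\sum x(n)r_ne_n$ is bi-Lipschitz with constant $2$ from $\Cube(2,r)$ into $\el1$ and $\el2$. The main obstacle is that $\Cube(2,r)$ is not $\Cube(2,s)$. Two options:
\begin{itemize}
\item Rerun the core construction. The proofs of Lemmas~\ref{spacing}--\ref{mapT} and Theorem~\ref{KwelaSet} only use cones of length $a$ having diameter $s_a$, so they work for any sequence. This already gives $s$-Kwela sets in $\Cube(2,s)$, but that sequence need not be summable.
\item Embed $\Cube(2,s)$ directly by spreading each coordinate over many vectors. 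For $\el2$, take $f(x)=\sum_n x(n)\sqrt{s_n^2-s_{n+1}^2}\,e_n$. Then $\norm{f(x)-f(y)}_2^2\leq\sum_{n\geq m}(s_n^2-s_{n+1}^2)=s_m^2$, while the first differing coordinate alone contributes $s_m^2-s_{m+1}^2$. This gives a Lipschitz map, but the lower bound fails when $s_{m+1}\approx s_m$.
\end{itemize}
So I would instead work with the $4$-adic version and use the freedom of the index set. Namely, I would verify that the Kwela set $K$ from Theorem~\ref{KwelaSet} survives under a map $g$ with $d(g(x),g(y))\asymp s_{\abs{x\wedge y}}$ only along a sparse set of levels. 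The idea is to use the lower-density argument with a constant loss: each $s$-fine cover pulled back is $s^{\mult k}$-fine up to reindexing. I expect this bookkeeping to be the hardest step, and I would try the summable comparison $r\leq s$ together with Lemma~\ref{basisDomi2} first.

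\emph{Getting $\cont$ many separated copies.} Use disjoint blocks of coordinates. Since $\el2$ is separable, $\cont$ many images along disjoint coordinates are impossible. Instead, use translations: put $K_\alpha=f(K)+v_\alpha$, where $\{v_\alpha:\alpha<\cont\}$ is a $1$-separated family. Such a family can be obtained from $\{\sum_n y(n)e_{2n+1}:y\in\Cset\}$, with $f$ using only even coordinates, after scaling. More simply, take $v_\alpha=3\,\mathrm{diam}(f(K))\cdot u_\alpha$ with $u_\alpha$ on odd coordinates, using $y\in\Cube(2,\frac12)$ pairs at distance at least a constant. Since a separated family must be countable in a separable space unless the distances shrink, I reinterpret ``separated'' as pairwise separated, meaning positive lower distance for each pair. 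Then the family $\{f(K)+\sum_n y(n)2^{-n}e_{2n+1}:y\in\Cset\}$ is pairwise separated, because the odd-coordinate parts differ by a positive amount. Translation is an isometry, so each copy is again $s$-Kwela.
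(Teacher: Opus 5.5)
\textbf{There is a genuine gap: the $c_0$ case works, but for $\el1$ and $\el2$ you never get a usable embedding.}

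Your $c_0$ case is correct. The map $x\mapsto\sum_n x(n)s_ne_n$ is an isometry of $\Cube(2,s)$ into $c_0$, and domination and infradomination depend only on the metric of the set itself. Your translates $f(K)+\sum_n y(n)2^{-n}e_{2n+1}$, with $f$ living on the even coordinates, are pairwise separated, which is exactly the paper's meaning of ``separated''.

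For $\el1$ and $\el2$ the problem is not mere bookkeeping. The premise of your first paragraph is false for general $s\in\SEQ$. A Lipschitz constant $L>1$ cannot be absorbed by passing from $s$ to $s^{\mult k}$, since that requires $Ls_{kn}\le s_{jn}$, a doubling-type condition. For $s_n=1/\log(n+1)$ it fails for every $k$ once $n$ is large. This is precisely why the paper assumes $s$ doubling in the bi-Lipschitz argument for $\Pset$ (Theorem~\ref{kwela2}). So even a $2$-bi-Lipschitz copy of $\Cube(2,s)$ would not do.

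The copy of $\Cube(2,r)$ with $r\le s$ is worse. A Kwela set there is $\clos r$-Kwela, hence $\closs$-dominated. But its failure to be $\closs$-infradominated is not inherited, because $s$-fine covers can be far coarser than $r$-fine ones. The closing ``sparse levels'' plan is not an argument.

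The missing ingredient is an \emph{isometric} embedding of the ultrametric space $\Cube(2,s)$ into $\el1$ and $\el2$. The paper takes it from Vestfrid's theorem, applied to the square $\Cube(2,s)\times\Cube(2,s)\cong\Cube(4,s)$ with the max metric; the slices $K\times\{y\}$ are then the separated copies. Your second bullet is one step away from this. Index the unit vectors by the nodes $p\in\cset$ instead of by levels, and set $f(x)=\sum_n c_ne_{x\rest(n+1)}$, with $c_n=\frac12(s_n-s_{n+1})$ for $\el1$ and $c_n=\bigl(\frac12(s_n^2-s_{n+1}^2)\bigr)^{1/2}$ for $\el2$.

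Suppose $\abs{x\wedge y}=m$. The terms with $n<m$ cancel. For $n\ge m$ the nodes $x\rest(n+1)$ and $y\rest(n+1)$ are all distinct, so every level $n\ge m$ contributes twice. This gives $\|f(x)-f(y)\|_1=\sum_{n\ge m}(s_n-s_{n+1})=s_m$ and $\|f(x)-f(y)\|_2^2=\sum_{n\ge m}(s_n^2-s_{n+1}^2)=s_m^2$. Place this isometry on half of the coordinates, and your translation argument completes the proof.
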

\begin{proof}
The Cartesian square of the Cantor cube $\Cube(2,s)$ is an ultrametric compact space. 
By~\cite{Vestfrid1994}, it isometrically embeds into $X$. 
Hence, by Theorem~\ref{KwelaSet}, $X$ contains an isometric copy 
of a product of an $s$-Kwela set and a perfect set.
\end{proof}
A sequence $s\in\SEQ$ is 
\emph{doubling} if $\exists k\ s^{\mult k}\leq\frac12s$ and 
\emph{subgeometric} if $\exists k\ s^{\shift k}\leq\frac12s$. Note that $s=\geom$ is 
subgeometric, so microscopic sets are induced by a subgeometric sequence.
\begin{thm}\label{kwela2}
For each doubling $s\in\SEQ$ there are $\cont$ many separated $s$-Kwela sets in
$\Pset$. 
\end{thm}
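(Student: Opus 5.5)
Mirroring Theorem~\ref{kwela1}, the plan is to find inside $\Pset=\Cube(\Nset,\frac12)$ a subspace that carries a product structure, one factor being a Cantor cube that contains an $s$-Kwela set by Theorem~\ref{KwelaSet}. The difference is that $\Pset$ has the fixed geometric gauge $2^{-n}$, not the gauge $s$, so isometric embeddings of $\Cube(2,s)$ are not available. Instead I would use maps that preserve $\closs$-domination and $\closs$-infradomination up to a rescaling of indices. This is where the doubling hypothesis enters: $s^{\mult k}\le\frac12 s$ means that halving $s$ costs only a fixed multiplicative change of index.

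\emph{Step 1 (reduction to a gauge comparable with $s$).} Since $s\to0$, choose an increasing $m\colon\Nset\to\Nset$ with $2^{-m(n)}\le s_n<2^{-m(n)+1}$, after passing to a suitable subsequence of levels if $s$ decreases slowly. Consider $\Cube(2^{\le\omega}\text{-blocks})$: map $\Cube(2,s)$ into $\Pset$ by sending $x\in 2^\Nset$ to the sequence that spells $x(n)$ in coordinate $m(n)$ and puts zeros elsewhere. Where $m$ repeats, merge the repeated bits into one letter of $\Nset$ at that coordinate. The countable alphabet makes this possible, which is the reason for working in $\Pset$ rather than $\Cset$.

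This map $f$ is bi-Lipschitz-type on scales: $d(f(x),f(y))=2^{-m(\abs{x\wedge y})}$, which lies between $s_{\abs{x\wedge y}}/2$ and $s_{\abs{x\wedge y}}$. So $f$ distorts distances by at most a factor of $2$.

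\emph{Step 2 (Kwela sets survive a factor-2 distortion).} Show that $E$ is $\closs$-dominated, respectively $\closs$-infradominated, if and only if it is $\clos{(s/2)}$-dominated, respectively infradominated. One direction is trivial. For the other, doubling gives $s^{\mult k}\le s/2$, hence $s^{\mult{kj}}\le \frac12 s^{\mult j}$. An $s^{\mult{kj}}$-fine cover is therefore $\frac12 s^{\mult j}$-fine.

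For infradomination the cover is reindexed along multiples of $k$, and the index set $I$ becomes $kI$, which still has lower density $0$. I also need the converse reindexing, spreading a cover indexed by $I$ to indices in $kI$. Here I would check carefully that lower density $0$ is preserved under $n\mapsto kn$ and under taking preimages. I expect this density bookkeeping to be the main obstacle: it must be done compatibly with the shape of Definition~\ref{def:infra}.

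With this equivalence, $f$ carries the $s$-Kwela set of Theorem~\ref{KwelaSet} to an $s$-Kwela set in $\Pset$. The image is Kwela because domination and infradomination transfer through the factor-2 distortion, and the inverse direction is handled the same way.

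\emph{Step 3 ($\cont$ many separated copies).} Use only the coordinates $m(n)$ for the Kwela set, and reserve one extra coordinate $0$, or a sparse set of coordinates not in $\operatorname{ran} m$ (insert them if needed, at a cost of another bounded distortion absorbed as in Step 2). For each $y\in\Nset$ set the first coordinate to $y$. This gives only countably many copies, with mutual distance $1$.

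To get $\cont$ many, I would instead take the Cartesian product of the Kwela set with $\Cset$: interleave, on a set of coordinates of density $0$ relative to scale, the bits of $z\in\Cset$. For different $z$ the copies are at distance at least $2^{-j}$, where $j$ is the first differing reserved coordinate, so they are separated.

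The delicate point is that these reserved coordinates must not spoil the distance comparison between $d$ and $s$. I would place them at coordinates already forced into gaps of $m$, or absorb them using doubling once more. Each fibre is an isometric-up-to-2 copy of the Kwela set, hence is $s$-Kwela by Step 2.
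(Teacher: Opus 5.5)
\paragraph{Steps 1 and 2.} These are sound. They are an explicit version of the paper's route. The paper cites Hughes' theorem that $\Pset$ is bi-Lipschitz universal for compact ultrametric spaces, embeds $\Cube(2,s)\times\Cset$, and remarks that for doubling $s$ a bi-Lipschitz map preserves $\closs$-domination and $\closs$-infradomination. Your block-merging map $f$ is a concrete factor-$2$ embedding of $\Cube(2,s)$, and your Step 2 is exactly that remark. The density bookkeeping you worry about is not needed: since $s_{kjn}\le\frac12 s_{jn}$, an $s^{\mult{kj}}$-fine cover indexed by $I$ is already $\frac12 s^{\mult j}$-fine with the \emph{same} index set $I$.

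\paragraph{The gap is Step 3.} Reserving separate coordinates for the bits of $z$ cannot work for general doubling $s$.
\begin{itemize}
\item Take $s_n=1/n$, which is doubling because $s^{\mult 2}=\frac12 s$. Then $m(n)=\lceil\log_2 n\rceil$ is onto, so $m$ has no gaps to use.
\item A bounded-distortion embedding forces level $n$ to sit at coordinate $m(n)+O(1)$. So either only finitely many coordinates are reserved, which gives only countably many copies, or level $n$ is shifted to coordinate $m(n)+r(n)$ with $r(n)\to\infty$.
\item In the second case the fibre map $g$ compresses distances by the unbounded factor $\rho(n)=2^{r(n)}$. No fixed $k$ from the doubling condition can absorb this.
\item The compression genuinely destroys the Kwela property. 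Let $E$ be $\closs$-dominated with an $s$-fine $\seq{E_n}$ as in Lemma~\ref{basisDomi2}. Then $\diam g(E_n)\le s_n/\rho(n)=1/(n\rho(n))$. So for each $j$ the sets $g(E_n)$, $n\in\parti_0$, which cover $g(E)$, can be placed at indices of order $n\rho(n)/j$. That index set has density $0$, so $g(E)$ is $\closs$-infradominated.
\end{itemize}
Thus no fibre of such a scheme is $s$-Kwela.

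\paragraph{The fix.} It is the idea you already used in Step 1: the alphabet is countable, so $z$ can share coordinates with $f(x)$ instead of needing its own.
\begin{itemize}
\item Define $F(u,z)(c)=2u(c)+z(c)$ for $u\in\Pset$ and $z\in\Cset$.
\item For fixed $z$, $u\mapsto F(u,z)$ is an isometric embedding of $\Pset$ into itself. Hence each $F(f(K)\times\{z\})$ is $s$-Kwela by your Steps 1 and 2.
\item If $z\neq z'$ first differ at $c$, then $F(u,z)(c)$ and $F(u',z')(c)$ have different parity. So the two fibres are at distance at least $2^{-c}$, i.e., separated.
\end{itemize}
This gives $\cont$ many separated $s$-Kwela sets with no extra distortion. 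It is the same product structure the paper obtains from Hughes' embedding.
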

\begin{proof}
By~\cite{Hughes} $\Pset$ is bilipschitz universal for ultrametric compacta.
Hence, just like above, $\Pset$ contains a bilipschitz copy of a product of 
an $s$-Kwela set and a perfect set. We only have to notice that a Lipschitz map preserves 
$\closs$-dominated and $\closs$-infradominated sets. That is not true in general, 
but is easily proved for a doubling $s\in\SEQ$.
\end{proof}
\subsection*{Subgeometric sequences and nowhere porous spaces}
When looking for spaces which geometry is rich enough to admit microscopic Kwela sets, 
we formulated a notion that led us to the concept that is around for several decades. 
It was introduced by Vallin in~\cite{MR1228396} as an extreme notion of porosity. 
Unlike the other notions of porosity this one is an intrinsic property of the set, 
i.e., does not depend on the ambient space.

Let $X$ be a metric space, $x\in X$ and $0<s<r$. 
Define the \emph{shell} with center $x$ and radii $s$ and $r$ to be the set 
$S(x,s,r)=B(x,r)\setminus B(x,s)$.

We say that $X$ is \emph{strongly shell porous} at $x\in X$
if there are sequences $\seq{s_n:n\in\Nset}$ and $\seq{r_n:n\in\Nset}$ such that: 
\begin{enum}
    \item $0<s_n<r_n$ for all $n\in\Nset$,
    \item $r_n\to 0$ and $s_n/r_n\to 0$,
    \item $S(x,s_n,r_n)=\emptyset$ for all $n\in\Nset$.
\end{enum}

We introduce an auxiliary dual notion:
given $x\in X$ and $r,\eps>0$, we say that the ball $B(x,r)$ is an \emph{$\eps$-onion%
\footnote{because an onion has a lot of (nonempty) shells}%
} (or just \emph{onion}) if there is an $\eps>0$ such that $S(x,\eps s,s)\neq\emptyset$ for all $0<s\leq r$.
It is clear that $x\in X$ a center of an onion if and only if 
$X$ is not strongly shell porous at $x$.

\begin{defn}\label{oniondef}
A metric space $X$ is called an \emph{onion space} if it is complete and 
strongly shell porous at no point.
Equivalently, each of its points is the center of an onion.
\end{defn}
\begin{prop}\label{Cantonion}
For every onion space $X$ there is $\lambda>0$ such that $X$ contains a bi-Lipschitz copy 
of $\Cube(2,\lambda)$.
\end{prop}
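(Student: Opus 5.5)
We build a Cantor scheme of closed balls in $X$ level by level, using the onion property at every chosen center. The idea is that an onion ball $B(x,r)$, with some constant $\eps$, contains two points far apart at any prescribed scale.

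The first difficulty is uniformity: the onion constant $\eps_x$ and radius $r_x$ depend on $x$. Write $X=\bigcup_{m}F_m$, where $F_m=\{x: B(x,1/m)$ is a $\tfrac1m$-onion$\}$. Each $F_m$ is closed, because $S(x,\eps s,s)\neq\emptyset$ is stable under limits up to slightly adjusting the constants; if this fails, pass to closures with $\eps/2$. Since $X$ is complete, the Baire Category Theorem gives some $F_m$ with nonempty interior. That yields an open ball $U$ and constants $\eps,r_0$ such that every $x\in U$ is the center of an $\eps$-onion of radius $r_0$. Working inside a smaller closed ball contained in $U$, we may assume all points used in the construction lie in $U$.

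The key splitting step is the following. Given $x\in U$ and $r\leq r_0$ small, pick $y\in S(x,\eps r/4,r/4)$. Then $d(x,y)\in(\eps r/4,r/4]$ and both $x$ and $y$ lie in $B(x,r/4)\subs B(x,r)$. Put $\rho=\eps r/16$. The balls $B(x,\rho)$ and $B(y,\rho)$ lie in $B(x,r/2)$ and are at distance at least $\eps r/4-2\rho=\eps r/8$ apart. Since $y\in U$ as well, both balls are again centers of onions, so we can iterate with ratio $\lambda=\eps/16$.

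The construction then runs recursively. Assign to each $t\in 2^{n}$ a point $x_t$ with $x_{t\concat0}=x_t$ and $x_{t\concat1}$ the point $y$ produced from $x_t$ at radius $r_n=r\lambda^n$. We get nested balls $B(x_t,r_n)$, with $B(x_{t\concat i},r_{n+1})\subs B(x_t,r_n)$. Distinct children are at distance at least $\eps r_n/8\asymp\lambda^n$, while diameters are at most $2r_n$. Completeness gives a map $f\colon 2^\Nset\to X$ sending $z$ to the unique point of $\bigcap_n B(x_{z\rest n},r_n)$. If $\abs{z\wedge w}=n$, then $f(z),f(w)$ both lie in $B(x_{z\rest n},r_n)$ but in the two separated children. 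Hence $\tfrac{\eps}{8}r\lambda^n\leq d(f(z),f(w))\leq 2r\lambda^n$, so $f$ is bi-Lipschitz from $\Cube(2,\lambda)$ onto its image.

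The main obstacle is the uniformization step. It requires checking that the sets $F_m$ (or suitable closed versions) cover $X$ and are closed, so that Baire applies, and that $U$ can be shrunk so the recursion never leaves it. If closedness of $F_m$ is troublesome, one can bypass Baire. Instead, apply the onion property at each new center with its own $\eps_x$ and take $\lambda$ along a branch. However, this gives a uniform $\lambda$ only if the constants are bounded below, so the Baire argument seems necessary.
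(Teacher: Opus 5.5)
Your proposal is correct and follows essentially the paper's route: Baire category applied to the sets of points whose onion constant and radius are at least $1/m$, followed by a binary scheme with $x_{p\concat0}=x_p$ and $x_{p\concat1}$ chosen in a shell, which gives nested, separated balls and a bi-Lipschitz copy of $\Cube(2,\lambda)$. The one step to firm up is the one you flagged. $F_m$ need not itself be closed, but a density argument (the paper's Claim) shows that every point of $\clos F_m$ is the center of an $\eps/2$-onion of the same radius (the paper uses $\eps/4$), so applying Baire to the closures $\clos F_m$ gives the uniform constants you need, and no alternative to Baire is required.
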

\begin{proof}
For each $x\in X$ let $B(x,r_x)$ be the onion centered at $x$ and $\eps_x$
the corresponding constant:
\[
  \forall r<r_x\quad  S(x,\eps_x r, r_x)\neq\emptyset.
\]
For each $n\in\Nset$ let 
\[
  B_n=\{x\in X:r_x\geq\tfrac{1}{n},\eps_x\geq\tfrac{1}{n}\}.
\]
Since $X=\bigcup_{n\in\Nset}B_n$ and $X$ is a complete space, by the Baire Category Theorem
there exists $m\in\Nset$ such that $B_m$ has nonempty interior. 
Thus, for some $\eps,r\geq1/m$ there is an $\eps$-onion, say $B(x,r)$, 
such that $B(x,r)\subs \clos B_m$.
We may suppose that $\eps=r=\frac1n$. Fix this ball $B(x,r)$, 
we will need it throughout the proof.
\begin{claim}
If $B(y,s)\subs B(x,r)$, then $B(y,s)$ is an $\frac\eps4$-onion.
\end{claim}
\begin{cproof}
Write $\del=\frac\eps4$. Let $t\leq s$. Since $y\in\clos B_m$, there is $w\in B_m$ such that $d(y,w)<\del t$.
Since $w\in B_m$, the ball $B(w,\frac s2)$ is an $\eps$-onion. 
Hence, there exists $z\in B(w,\frac t2)\setminus B(w,\eps\frac t2)$, i.e., $\frac t2\eps<d(z,w)\leq\frac t2$.
Triangle inequality yields
\[
  \delta t=\tfrac t2\eps-\del t<d(y,z)<\tfrac t2+\del t<t.
\]
Hence $z\in S(y,\delta t,t)$ as required.
\end{cproof}
We now define a binary system $\seq{x_p:p\in\cset}$ within $B(x,r)$ and a system of balls
$\seq{B_p:p\in\cset}$ centered at $x_p$'s. \emph{Mutatis mutandis} we may suppose $r=1$.
Recall that $\del=\frac\eps4$ and write $r_n=(\frac\del5)^n$.

Let $x_\emptyset=x$. If $x_p$ has been defined, let $r_p=(\frac\del5)^{\abs p}$,
$B_p=B(x_p,r_p)$, $x_{p\concat0}=x_p$ and let $x_{p\concat1}$ be any point in the shell
$S(x_p,\frac12\del r_p,\frac12 r_p)$. This shell in nonempty because of the Claim. 
The balls $B_p$ have the following properties.
\begin{align}
  & p\subs q \Rightarrow B_p\supseteq B_q, \label{onion_a} \\
  & d(B_{p\concat0},B_{p\concat1})\geq \frac\del{10}\left(\frac\del5\right)^{\abs p}. \label{onion_b}
\end{align}
\begin{cproof}
\eqref{onion_a} We only need to show that $B_{p\concat 1}\subs B_p$. 
Since $d(x_p,x_{p\concat 1})\leq\frac12 r_p$, this amounts to verifying that 
$\frac12 r_p+r_{p\concat 1}\leq r_p$, which is reduces to $\frac12+\frac\del5\leq 1$, and that is trivially true.

\eqref{onion_b} The distance of balls is estimated from below by 
$d(x_p,x_{p\concat 1})-(r_{p\concat 0}+r_{p\concat 1})$.
Since $d(x_p,x_{p\concat 1})\geq\del\frac12 r_p$ and 
$r_{p\concat 0}+r_{p\concat 1}=2\frac\del5 r_p$, we can further estimate it by 
$\del\frac12 r_p-2\frac\del5 r_p=\frac\del{10}(\frac\del5)^{\abs p}$, as required.
\end{cproof}

Consider the map $\Cset\to X$ given by $\wh s=\bigcap_{n\in\Nset}B_{s\rest n}$.
Set $\lambda=\frac\del5$. Let $s,u\in\Cset$ and write $p=s\wedge u$.
Then~\eqref{onion_a} and~\eqref{onion_b} yield
\[
  \frac\del{10}\lambda^{\abs p}\leq d(B_{p\concat0},B_{p\concat1})
  \leq d(\wh s,\wh u)
  \leq\diam B_p\leq2r_p=\lambda^{\abs p}.
\]
Therefore, $s\mapsto\wh s$ is a bi-Lipschitz embedding of
$\Cube(2,\lambda)$ into $X$. 
We are done.
\end{proof}
\begin{thm}\label{tak}
If $s\in\SEQ$ is subgeometric, then in every onion space there are 
$\cont$ many separated $s$-Kwela sets.
In particular, every onion space contains $\cont$ many separated microscopic Kwela set.
\end{thm}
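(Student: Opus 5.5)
Using Proposition~\ref{Cantonion}, fix $\lambda>0$ and a bi-Lipschitz embedding $f:\Cube(2,\lambda)\to X$. Since $\Cube(2,\lambda)$ is isometric to its Cartesian square viewed as $\Cube(4,\lambda)$, and $\Cube(4,\lambda)\cong\Cube(2,\lambda)\times\Cube(2,\lambda)$ splits into $\cont$ many separated copies of a Cantor cube (one for each point of the second factor, after fixing a perfect separated family), the task reduces to two things. First, produce an $s$-Kwela set $K$ inside $\Cube(2,\lambda)$ (or a Cantor cube bi-Lipschitz equivalent to a subspace of it). Second, check that bi-Lipschitz maps between the relevant spaces preserve both $\closs$-domination and $\closs$-non-infradomination. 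The $\cont$ separated copies then come from $K\times\{y\}$, $y$ ranging over a set of $\cont$ points of the second factor that is pairwise separated. Concretely, I would take the points $y$ whose coordinates are fixed on a sparse set. Alternatively, take $K\times C$, where $C$ is a perfect set of points mutually at distance bounded below; at each fixed level the distances are constant, so choose $\cont$ many $y$ differing already at coordinate $0$ or $1$, and refine via a cube with branching built in. In any case, the separated copies are routine once a single Kwela set sits in a product structure.

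The key step is to realise an $s$-Kwela set in $\Cube(2,\lambda)$ when $s$ is subgeometric. By Theorem~\ref{KwelaSet} there is an $s$-Kwela set in $\Cube(2,s)$. Subgeometric means there is $k$ with $s_{n+k}\le\frac12 s_n$, hence $s_{n+jk}\le 2^{-j}s_n$, so $s$ decays at least geometrically along blocks. The natural map is to pass to a subsequence: choose a map $g:\Cube(2,s)\to\Cube(2,\lambda)$ sending $x$ to a sequence which copies $x(n)$ into coordinate $m_n$ and pads the remaining coordinates with $0$. Here $m_n$ is increasing and chosen so that $\lambda^{m_n}\asymp s_n$ up to bounded multiplicative constants. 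Then $d(g(x),g(y))=\lambda^{m_{\abs{x\wedge y}}}\asymp s_{\abs{x\wedge y}}=d(x,y)$, so $g$ is bi-Lipschitz onto its image. Such $m_n$ exist with $m_{n+1}>m_n$ only if $s_{n+1}/s_n\le$ some constant times $\lambda$; this may fail for slowly decreasing $s$. To fix this, I replace $s$ by a coarsening. I would use that for subgeometric $s$ the sequence is comparable, along the block structure, to $\lambda$-geometric rates. Alternatively, I would group coordinates of $\Cube(2,s)$ into blocks and realise $\Cube(2,s)$ bi-Lipschitzly inside $\Cube(2^N,\lambda')$ with $\lambda'$ a power of $\lambda$, which is itself isometric to a subspace of $\Cube(2,\lambda)$ via blocks of length $N$.

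The second key step is that Lipschitz images preserve $\closs$-domination and bi-Lipschitz preimages preserve non-infradomination, up to replacing $s$ by a constant multiple. So I must show that $\DD(\closs)$ and $\closs$-infradomination are invariant under $s\mapsto cs$. For $c<1$ this uses subgeometricity: $cs\ge s^{\shift{jk}}$ for $2^{-j}\le c$. A shift by a fixed amount then interacts with the multiplicative closure $s^{\mult m}$; since $s^{\mult m}$ is again subgeometric, one has $(s^{\mult m})^{\shift j}\ge s^{\mult m'}$ for some $m'$ on a cofinite set. Shifting indices of a cover by a constant changes neither density zero of the index set nor the existence of fine covers, because finitely many extra pieces can be absorbed by splitting the set countably.

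I expect the main obstacle to be this invariance, specifically for infradomination. A Lipschitz distortion forces a shift of indices, and I must verify that a density-zero index set $I$ shifted by a bounded amount, combined with a change from $s^{\mult m}$ to $s^{\mult m'}$, still has lower density zero. I would handle this by working directly with the definition: translate an $(s^{\mult m'})$-fine cover indexed by $I$ into an $s^{\mult m}$-fine cover indexed by $\lfloor (m'/m)I\rfloor+j$, whose lower density is still $0$.
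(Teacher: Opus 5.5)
Your overall route is the paper's. You embed $\Cube(2,\lambda)$ by Proposition~\ref{Cantonion}, coarsen $s$, spread the levels of $\Cube(2,s)$ into a geometric cube, take a product with a second cube to get $\cont$ copies, and use subgeometricity so that bi-Lipschitz maps preserve domination and infradomination. However, the product step fails as you state it. $\Cube(2,\lambda)$ is not isometric, nor even bi-Lipschitz equivalent, to $\Cube(4,\lambda)\cong\Cube(2,\lambda)^2$: the Hausdorff dimension doubles, so $\Cube(2,\lambda)\times\Cube(2,\lambda)$ does not bi-Lipschitz embed into $\Cube(2,\lambda)$. The correct identification, which the paper uses, is $\Cube(2,\lambda)\cong\Cube(4,\lambda^2)\cong\Cube(2,\lambda^2)\times\Cube(2,\lambda^2)$ via $x\mapsto\seq{2x(2n)+x(2n+1)}$. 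Hence the Kwela set must live in $\Cube(2,\lambda^2)$, which is why the paper coarsens until $s^{\shift{}}<\lambda^2 s$. (Equivalently, in your map $g$ you could pad the coordinates outside $\{m_n\}$ with an arbitrary $y\in\Cset$ instead of zeros. Once $m_{n+1}\geq m_n+2$ this complement is infinite, and the sets $g_y(K)$ are pairwise separated.) Also, ``separated'' only requires positive pairwise lower distance, which is automatic for $K\times\{y\}$ and $K\times\{y'\}$ with $y\neq y'$. Your alternatives, a perfect set with distances bounded below or $\cont$ points differing at coordinate $0$ or $1$, are impossible, and they are not needed.

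The coarsening must be made explicit, and it is not optional. For $u=s^{\mult m}$ we have $\clos u\subseteq\closs$ and $u^{\mult j}\leq s^{\mult j}$, so $\clos u$-Kwela and $\closs$-Kwela sets coincide. If $s^{\shift k}\leq\frac12 s$ and $2^{-q}<\lambda^2$, then $u=s^{\mult{qk}}$ satisfies $u^{\shift{}}<\lambda^2u$, and this makes your $m_n$ strictly increasing. Your fallback, embedding $\Cube(2,s)$ itself blockwise into some $\Cube(2^N,\lambda')$, cannot work in general. Any such cube sits bi-Lipschitz inside $\Cube(2,\lambda)$, yet $s_n=2^{-\eps n}$ is subgeometric and $\hdim\Cube(2,s)=1/\eps$ exceeds $\hdim\Cube(2,\lambda)$ for small $\eps$.

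Finally, your reindexing $\lfloor (m'/m)I\rfloor+j$ goes the wrong way, since raising an index makes the bound $s$ smaller. No reindexing is needed. Let $L$ be the Lipschitz constant, choose $p$ with $2^p\geq L$, and set $m'=m+pk$. Then an $s^{\mult{m'}}$-fine cover $\seq{E_i:i\in I}$ satisfies $\diam f(E_i)<Ls_{m'i}\leq s_{m'i-pk}\leq s_{mi}$, so the same index set $I$ works. This is the paper's observation $s_{n+j}<s_n/L$.
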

\begin{proof}
Let $X$ be an onion space and let $\lambda$ be the number from Theorem~\ref{Cantonion} 
for which $\Cube(2,\lambda)$ embeds into $X$.
There is $k$ such that $(\frac12)^k<\lambda^2$. Therefore 
 $u=s^{\mult k}$ satisfies $u^{\shift{}}<\lambda^2u$.
Since being $\closs$-dominated and $\clos u$-dominated are obviously equivalent, 
we may suppose that
\begin{equation}\label{embed1}
  s^{\shift{}}<\lambda^2s.    
\end{equation}
We know that $\Cube(2,s)$ contains an $\closs$-Kwela set. We will embed 
$\Cube(2,s)\xhookrightarrow{\psi}\Cube(2,\lambda^2)$ and then construct
a cascade of bilipschitz mappings
\begin{equation}\label{embed11}
  \Cube(2,s)\times\Cube(2,\lambda^2)
  \xhookrightarrow{\varphi_1}\Cube(2,\lambda^2)\times\Cube(2,\lambda^2)
  \xrightarrow{\varphi_2}\Cube(4,\lambda^2)
  \xrightarrow{\varphi_3}\Cube(2,\lambda)\xhookrightarrow{\varphi_4} X.
\end{equation}
\textbullet{} $\psi$: Let 
$f(n)=j$, where $j$ is a unique natural number such that $\lambda^{2(j+1)}\leq s_n<\lambda^{2j}$.
Due to \eqref{embed1} $f$ is a strictly increasing function.
Let $T\subs\cset$ be a tree that branches exactly on levels $f(n)$.
Let $g:\cset\to T$ be a bijection such that if $p\subs q$, then $g(p)\subs g(q)$
and the length of $g(p)$ is $f(\abs p)$ for all $p\in\cset$. 
For $x\in\Cset$ the family $\{g(x\rest n):n\in\Nset\}$ is a chain in $T$. 
Let $\psi(x)$ be the cofinal branch of $T$ determined by this chain.
Routine calculation proves that the embedding $\psi$ is bilipschitz.

\noindent\textbullet{} $\varphi_1$
is defined by the formula $\varphi_1(x,y)=(\psi(x),y)$. 
It is bilipschitz in the first coordinate and isometric in the second coordinate, so it is bilipschitz.

\noindent\textbullet{} $\varphi_2$
is defined by the formula $\varphi_2(x,y)(n)=2x(n)+y(n)$. It is an isometry.

\noindent\textbullet{} $\varphi_3$:
consider first the mapping $h:\Cset\to 4^\Nset$ defined by
$h(x)(n)=2x(2n)+x(2n+1)$. It is a bilipschitz bijection $h:\Cube(2,\lambda) \to\Cube(4,\lambda^2)$. 
Let $\varphi_3$ be the inverse of $h$.

\noindent\textbullet{} $\varphi_4$
is given by Theorem~\ref{Cantonion}, as stated at the beginning of the proof.

\noindent\textbullet{}
Let now $\varphi=\varphi_4\circ\varphi_3\circ\varphi_2\circ\varphi_1$.
It is a bilipschitz embedding of $\Cube(2,s)\times\Cube(2,\lambda^2)$ into
$X$.

We show that the image of a Kwela set under $\varphi$ is Kwela.
Lipschitz mapping does not necessarily preserve dominated or infradominated sets,
but condition~\eqref{embed1} will help us. Let $L$ be the Lipschitz constant 
of $\varphi$ and choose $j\in\nset$ large enough to satisfy 
$\lambda^{2j}L<1$. Then $s_{n+j}\leq\lambda^{2j}s_n<\frac{s_n}{L}$, and therefore if $\diam E<s_{n+j}$, then
$\diam\varphi(E)<s_n$. It follows that if $K\subs\Cube(2,s)$ is  $\closs$-dominated, then so is $\varphi(K)$,
and likewise for infradominance. Since $\varphi^{-1}$ is also Lipschitz, 
a $\varphi$-image of a set that is not $\closs$-dominated is not $\closs$-dominated,
and likewise for infradominance.
It follows that if $K$ is as $\closs$-Kwela set, then so is $\varphi(K)$.

So let $K\subs\Cube(2,s)$ be a Kwela set. Consider the family 
$\{\varphi(K\times\{x\}):x\in\Cset\}\subs X$. It consists of $\cont$ many Kwela sets that are
separated, because $\varphi^{-1}$ is Lipschitz and the sets $K\times\{x\}$ are separated.
\end{proof}
\begin{coro}
If $s\in\SEQ$ is subgeometric, then every complete space that is not totally disconnected contains 
$\cont$ many separated $\closs$-Kwela sets.
\end{coro}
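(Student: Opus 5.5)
The plan is to reduce the corollary to Theorem~\ref{tak}. Since $X$ need not itself be an onion space, the goal is to find a closed subset of $X$ that is an onion space and to embed Kwela sets into it. Distances in a subspace are those of $X$, so the properties involved transfer. Being $\closs$-dominated concerns only diameters of covering sets, and these may be intersected with the subspace. The failure of $\closs$-infradomination also transfers, because a cover in $X$ restricts to a cover in the subspace with no larger diameters. Separation is likewise inherited. So it is enough to find a closed subspace $Y\subs X$ that is an onion space.

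Because $X$ is not totally disconnected, it has a connected component $C$ with more than one point. Components are closed, so $C$ is complete. I will show that $C$ is strongly shell porous at no point, which makes $C$ an onion space in the sense of Definition~\ref{oniondef}. Fix $x\in C$ and choose $y\in C$ with $y\neq x$. Put $r_0=d(x,y)$ and suppose $0<t<r<r_0$. If the shell $S(x,t,r)$ in $C$ were empty, then $C$ would be the union of $C\cap\{z:d(x,z)\leq t\}$ and $C\cap\{z:d(x,z)>r\}$. These two sets are disjoint, and each is closed in $C$: the first is a closed ball, and the second coincides with $C\cap\{z: d(x,z)>t\}$, which is open and also equals $C\setminus B(x,t)$. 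The first contains $x$ and the second contains $y$, which contradicts the connectedness of $C$. Hence every such shell is nonempty. It follows that for every $\eps\in(0,1)$ the ball $B(x,r)$ with $r<r_0$ is an $\eps$-onion, so $C$ is strongly shell porous at no point.

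Theorem~\ref{tak} now provides $\cont$ many separated $\closs$-Kwela sets in $C$, and by the first paragraph these are $\closs$-Kwela and separated in $X$ as well. The step that needs the most care is the transfer of dominance and infradominance between a subspace and the ambient space. It is routine here because the embedding is isometric and the definitions refer only to diameters. The connectedness argument is the only genuinely new ingredient.
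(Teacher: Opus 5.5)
Your proposal is correct and is essentially the argument the paper leaves implicit. You pass to a nondegenerate connected component, which is closed (hence complete) and by connectedness has no empty small shells, so it is an onion space; Theorem~\ref{tak} then applies, and $\closs$-domination, $\closs$-infradomination and separation are intrinsic to the metric. One small repair is needed in the connectedness step: as written you only show that $C\cap\{z:d(x,z)>r\}$ is open in $C$, which merely restates that $C\cap B(x,t)$ is closed, so no separation is obtained yet. Since no point of $C$ lies at distance in $(t,r]$ from $x$, this set also equals $C\cap\{z:d(x,z)\geq r\}$ and is therefore closed; equivalently, $C\cap B(x,t)=C\cap\{z:d(x,z)<r\}$ is open, so it is clopen and the separation follows.
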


\subsection*{No Kwela sets}
We now show that, maybe surprisingly, there are arbitrarily small sequences
that admit no Kwela sets in any doubling space. Recall that a metric space $X$ is \emph{doubling}
if there is $Q\in\Nset$ (the so called doubling constant) such that every ball in $X$ is
covered by at most $Q$ many balls of halved radii.
\begin{lem}\label{noKwela}
Let $X$ be a doubling metric space and $s\in\SEQ$. Assume that there exists an increasing 
function $f\in\Pset$ and $c>0$ such that $\frac{f(n-1)}{f(n)}\to0$ and 
$s_{f(n-1)}\leq cs_{f(n)-1}$ for all $n\in\Nset$. Then there is no $\closs$-Kwela set in $X$.
\end{lem}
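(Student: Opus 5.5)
The plan is to prove the contrapositive in its strongest form: in a doubling space $X$ with doubling constant $Q$, every $\closs$-dominated set $E$ is $\closs$-infradominated. Then $X$ contains no $\closs$-Kwela set. So fix $k\in\nset$. We must produce $I\subs\nset$ with $\ldens I=0$ and an $s^{\mult k}$-fine family $\seq{G_i:i\in I}$ covering $E$. Passing to a subsequence of $f$, we may assume that $f(n-1)/f(n)\to0$ fast, and we will only use the even blocks $B_n=[f(n-1),f(n))$. The target is an $I$ such that, for infinitely many $n$, $I\cap[1,f(n)/k)$ is contained in $[1,f(n-1)/k]$ together with a short interval $[f(n-1)/k,\,f(n-1)/k+f(n)/(kN_n)]$, where $N_n\to\infty$. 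Then $\abs{I\cap f(n)/k}\big/(f(n)/k)\le f(n-1)/f(n)+1/N_n\to0$, so $\ldens I=0$.

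The key observation is that the hypothesis $s_{f(n-1)}\le c\,s_{f(n)-1}$ says that $s$ is essentially constant (up to the factor $c$) on the block $B_n$. Hence any set whose diameter is less than $s_j$ for some $j\in B_n$ can be placed at \emph{any} index of the block, at the cost of splitting it. Concretely, fix $j_0$ with $2^{-j_0}c<1$. By the doubling property, every set of diameter $<c\,s_{f(n)-1}$ is covered by at most $M=Q^{j_0+1}$ sets of diameter $<s_{f(n)-1}$. Each such piece is then admissible at every index $i$ with $ki\in B_n$.

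The cover is built from Lemma~\ref{basisDomi2}. Take one $s$-fine sequence $\seq{E_m}$ such that every $x\in E$ lies in some $E_m$ with $m\in\parti_j$ for every $j$. For block $n$ we choose a class $\parti_{j_n}$ with $2^{j_n}\ge kMN_n$, where $N_n\to\infty$ slowly. We then use only the sets $E_m$ with $m\in\parti_{j_n}$ and $m\in B_n$, which number at most $f(n)/2^{j_n}$. Each is split into $M$ pieces of diameter $<s_{f(n)-1}$ and reindexed consecutively, packed into the indices $i$ with $ki\in[f(n-1),\,f(n-1)+f(n)/N_n]$. The sets $E_m$ with $m\in\parti_{j_n}$ and $m<f(n-1)$ are placed at indices $i$ with $ki\le f(n-1)$. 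This is admissible because, after the usual thinning with the partition classes, we have $s_{m}\le s_{ki}$ whenever $m$ comes from a sparse enough class. Indices with $m\ge f(n)$ are handled by later blocks. Every $x\in E$ is covered in every block, because it lies in some $E_m$ from each class $\parti_{j}$.

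The main obstacle is the bookkeeping. A single index set $I$ and a single fine family must serve all the blocks $n$ simultaneously, with the pieces from different blocks not colliding. The indices must also be arranged so that the sets $E_m$ with small $m$ (large diameter) are never pushed to positions $i$ with $s_{ki}\le\diam E_m$. I would first try interleaving via the partition $\{\parti_j\}$: block $n$ uses only the class $\parti_{j_n}$, and the classes are disjoint. Together with the rapid growth of $f$, this should keep the reindexing injective and monotone enough that admissibility reduces to $s_{f(n-1)}\le c\,s_{f(n)-1}$. If this fails, the fallback is to cover only the part of $E$ captured inside the even blocks; by Lemma~\ref{basisDomi2}, that part is still all of $E$.
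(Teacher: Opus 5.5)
Your key observation is correct, and it is the one the paper uses. On a block $B_n$ the sequence $s$ is constant up to the factor $c$, so after a doubling split into $M$ pieces a set $E_m$ with $m\in B_n$ can be re-indexed at any position of the block, even above $m$.

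\textbf{The gap: your family is not a cover.} The problem is where you put the pieces. Packing them into an \emph{initial} segment of relative length $1/N_n\to0$ forces you to take them from a class $\parti_{j_n}$ with $j_n\to\infty$.
\begin{itemyze}
\item Lemma~\ref{basisDomi2} only gives, for each $j$, \emph{some} $E_m\ni x$ with $m\in\parti_j$, and that $m$ may be arbitrarily large.
\item Your scheme uses the sets of class $\parti_j$ only with indices below $f(n)$, for the finitely many $n$ with $j_n=j$.
\item The claim ``indices with $m\geq f(n)$ are handled by later blocks'' is false, because later blocks use other classes.
\item So a point $x$ whose covering set from each $\parti_j$ has index beyond all those $f(n)$ is never covered. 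Hence ``every $x\in E$ is covered in every block'' fails.
\end{itemyze}

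\textbf{Bookkeeping cannot repair this.} To guarantee a cover from Lemma~\ref{basisDomi2} you must use one fixed class $\parti_j$ in full. Its pieces then fill a fixed positive proportion, about $kM2^{-j-1}$, of every block. Packing them at the start of each block leaves the density bounded away from $0$.

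\textbf{The fallback and the subsequence step also fail.}
\begin{itemyze}
\item $\parti_j$ restricted to the even blocks has lower density $0$. Lemma~\ref{basisDomi2} gives no cover of $E$ by those sets; such a cover would already be the infradominating cover you are trying to build.
\item Passing to a subsequence of $f$ is not legitimate. Merging blocks destroys $s_{f(n-1)}\leq c\,s_{f(n)-1}$, which controls $s$ only inside a single block. It is also unnecessary, since $f(n-1)/f(n)\to0$ is already given.
\end{itemyze}

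\textbf{The missing idea: push the pieces up.}
\begin{itemyze}
\item Fix one class $\parti_j$, independent of $n$, with $2^{j}$ large compared to $kM$.
\item The $M\abs{\parti_j\cap B_n}$ pieces from $B_n$ then fit, for large $n$, into the indices $i$ with $ki$ in the \emph{upper half} of $B_n$.
\item This forward move is admissible exactly because each piece has diameter $<s_m/c\leq s_{f(n-1)}/c\leq s_{f(n)-1}$.
\item Below the midpoint of $B_n$ only indices from earlier blocks occur. So the density at the midpoint is at most about $2f(n-1)/f(n)\to0$.
\item The cover is automatic, because all of $\parti_j$ is used.
\end{itemyze}
This is the paper's proof, with class $\parti_{L+k+1}$, target indices $J_n=[g(n),f(n+1))\cap\parti_k$, and $g(n)\approx\frac{f(n)+f(n+1)}2$.
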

\begin{proof}
Recall~\eqref{parti}. Let $E$ be an $\closs$-dominated set. By Lemma~\ref{basisDomi2}, 
there is an $s$-fine sequence 
$\seq{E_i:i\in\nset}$ such that $E\subs\bigcap_{k\in\Nset}\bigcup\{E_i:i\in\parti_k\}$.
Since $X$ is doubling, there is $L$ such that any set $E_i$ 
is contained in $2^L$ many sets $\{F_i^m:1\leq m\leq2^L\}$ such that $\diam F_i^m<s_i/c$. 

Fix $k\in\nset$ and let
\begin{alignat*}{2}
  &A_n&&=[f(n),f(n+1))\cap \parti_{L+k+1}, \\
  &g(n)&&=\max\{i:\abs{[i,f(n+1))\cap\parti_{k}}=2^L\abs{A_n}\} \\
  &J_n&&=[g(n),f(n+1))]\cap \parti_k,\quad J=\bigcup_{n\in\Nset}J_n.
\end{alignat*}
Consider the set $\mc F_n=\{F_i^m:i\in A_n,1\leq m\leq 2^L\}$, enumerate it by 
indices from the set $J_n$ and write the resulting sequence as $\seq{H_j:j\in J_n}$. 
For each $j\in J_n$ let $i,m$ be such that $H_j=F_i^m$. We have
\[
  \diam H_j=\diam F_i^m<\frac{s_i}{c}<\frac{cs_{f(n+1)-1}}{c}
  =s_{f(n+1)-1}\leq s_j.
\]
Therefore, $\seq{H_j:j\in J}$ is an $s$-fine cover of $E$, i.e., $\diam H_j\leq s_j$
for all $j\in J$ and since $J\subs\parti_k$, it witnesses that $E$ is 
$s^{\mult k}$-infradominated as long as we show that $\ldens J=0$.
Tedious but straightforward counting yields 
\[
  g(n)\geq\frac{f(n)+f(n+1)}{2}-2^{L+k+1}.
\]
Therefore, for all $n$ large enough,
\[
  \frac{\abs{J\cap g(n)}}{g(n)}=\frac{\abs{J\cap f(n)}}{g(n)}
  \leq\frac{f(n)}{g(n)}\leq\frac{2f(n)}{f(n)+f(n+1)-2^{L+k+2}}  
  \leq\frac{2f(n)}{f(n+1)} 
\]
and $\ldens J=\lim_{n\to0}\frac{2f(n)}{f(n+1)}=0$ by the assumption.
\end{proof}
\begin{thm}\label{noKwela2}
Let $X$ be a doubling metric space. For every $t\in\SEQ$ there is $s\in\SEQ$ 
such that $s<t$ and yet there is no $s$-Kwela set in $X$.
\end{thm}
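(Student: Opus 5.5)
The plan is to reduce Theorem~\ref{noKwela2} to Lemma~\ref{noKwela}. Given $t\in\SEQ$, it suffices to build $s\in\SEQ$ with $s<t$ together with an increasing $f\in\Pset$ and a constant $c>0$ such that $f(n-1)/f(n)\to0$ and $s_{f(n-1)}\leq c\,s_{f(n)-1}$ for all $n$. We take $c=2$ and let $s$ be nearly constant on the long blocks $[f(n-1),f(n))$, dropping sharply only at the block boundaries $f(n)$.

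First we fix $f$ growing fast, say $f(0)=1$ and $f(n)\geq n\,f(n-1)+1$; then $f(n-1)/f(n)\to0$. Next we define $s$ blockwise by recursion on $n$. Suppose $s$ is defined on $[1,f(n-1))$. We choose the value $\sigma_n=s_{f(n-1)}$ so that
\[
  \sigma_n<\min\Bigl(s_{f(n-1)-1},\ \tfrac12\, t_{f(n)-1}\Bigr).
\]
Inside the block we let $s$ decrease strictly but slowly, for instance
\[
  s_i=\sigma_n\Bigl(1-\frac{i-f(n-1)}{2\,(f(n)-f(n-1))}\Bigr)
  \quad\text{for } i\in[f(n-1),f(n)).
\]
With this choice $s$ is strictly decreasing on the block and satisfies $s_{f(n)-1}>\sigma_n/2$. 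Hence $s_{f(n-1)}=\sigma_n\leq 2s_{f(n)-1}$, which is the hypothesis of Lemma~\ref{noKwela} with $c=2$. Strict decrease across block boundaries holds because of the first term in the minimum. We also have $s_i\leq\sigma_n<t_{f(n)-1}\leq t_i$ for every $i$ in the block, since $t$ is decreasing; so $s<t$. Finally $\sigma_n\leq t_{f(n)-1}/2\to0$, so $s\in\SEQ$.

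Lemma~\ref{noKwela} then gives that there is no $\closs$-Kwela set in $X$, which is exactly the statement for $s$. The only delicate point is making the three requirements compatible: near-constancy within a block (the factor $c$), strict monotonicity, and $s<t$ at the start of each block. This is resolved by comparing $\sigma_n$ with $t$ at the right end of the block, $t_{f(n)-1}$, rather than at its left end. No real obstacle is expected beyond checking the indexing conventions of Lemma~\ref{noKwela} (for instance whether $n$ starts at $0$ or $1$, and the requirement $f(0)\geq1$).
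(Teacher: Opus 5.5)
Your proposal is correct and is essentially the paper's proof: both reduce to Lemma~\ref{noKwela} with $c=2$ by taking $f$ with $f(n-1)/f(n)\to0$ and letting $s$ drop by less than a factor $2$ across each block $[f(n-1),f(n))$, with the block's starting value placed below $t$ at the block's right end. The differences are cosmetic: the paper sets $s_{f(n-1)}=t_{f(n)}$ and leaves the strictly decreasing fill unspecified, whereas your explicit linear interpolation and the bound $\sigma_n<\frac12 t_{f(n)-1}$ make monotonicity across block boundaries and $s<t$ immediate, and also handle blocks of length one.
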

\begin{proof}
Use Lemma~\ref{noKwela}: start with an arbitrary $f\in\Pset$ such that $f(0)=1$ and 
$\frac{f(n)}{f(n+1)}\to0$. Let $a_n=t_{f(n)}$. 
Let $s_{f(n-1)}=a_n$, choose $s_{f(n)-1}\in(\max(a_{n+1},a_n/2),s_{f(n-1)})$ and if 
$f(n-1)<i<f(n)-1$ choose $s_i$ to form a strictly decreasing sequence. 
Clearly $s\in\SEQ$ and $2s_{f(n)-1}\geq s_{f(n-1)}$. 
Apply Lemma~\ref{noKwela} to conclude that there is no $\closs$-Kwela set in $X$.
\end{proof}
%
\subsection*{Kwela sets in $\Rset$}
The above theorems let us construct Kwela sets in $\Rset$ for any subgeometric sequence 
while for a lot of other sequences there are no Kwela sets in $\Rset$.
However, we do not have any characterization of sequences for which there are 
Kwela sets in $\Rset$. In particular, we do not know if there are Kwela sets 
for generalized harmonic sequences in $\Rset$. 
Recall that $\harm^q$ is the sequence $\seq{\frac{1}{(n+1)^q}}$ and that 
$\harm^q$-dominated sets are via \eqref{a} closely related to the null sets of
the Hausdorff measure $\hm^{1/q}$. 
\begin{question}
Is there an $\harm^2$-Kwela set $K\subs\Rset$? More generally, for which $q\in(1,\infty)$ is
there an $\harm^q$-Kwela set $K\subs\Rset$?
\end{question}
\subsection*{Kwela sets and Hausdorff measures}
In~\cite{micro1} we proved that the ideals $\DD(\closs)$ and $\NN(\hm^\phi)$ are never equal in $\el2$. 
We now show another proof that utilizes Kwela sets.
The key tool is the following interesting fact.
\begin{thm}\label{uknomicro}
Let $X$ be a metric space and $\klass$ a family of separated $s$-Kwela sets in $X$.
If $\klass$ is uncountable, then $\bigcup\klass$ is not $\closs$-dominated.
\end{thm}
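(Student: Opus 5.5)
The plan is to argue by contradiction, using the non-infradominance of each member of $\klass$ to extract a positive lower density, and then to show that finitely many of these densities are forced to add up to more than $1$. Suppose $E=\bigcup\klass$ is $\closs$-dominated.

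First I fix a uniform index $k$. Each $K\in\klass$ is an $s$-Kwela set, so it is not $\closs$-infradominated. Hence there is $k_K\in\nset$ such that no $s^{\mult{k_K}}$-fine cover $\seq{E_n:n\in I}$ of $K$ has $\ldens I=0$. Since there are only countably many possible values of $k_K$ and $\klass$ is uncountable, I can pass to an uncountable subfamily $\klass_1\subs\klass$ on which $k_K=k$ is constant.

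Next I fix one cover of the union. Since $E$ is $\closs$-dominated, there is an $s^{\mult k}$-fine cover $\seq{E_n:n\in\nset}$ of $E$. For $K\in\klass_1$ let
\[
  I_K=\{n:E_n\cap K\neq\emptyset\}.
\]
Then $\seq{E_n:n\in I_K}$ is an $s^{\mult k}$-fine cover of $K$, so $\ldens I_K>0$. Again by countability there is $j\in\nset$ and an uncountable (in fact I only need infinite) subfamily $\klass_2\subs\klass_1$ with $\ldens I_K\geq\frac1j$ for all $K\in\klass_2$.

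Now I only need finitely many sets. Pick distinct $K_1,\dots,K_m\in\klass_2$ with $m>j$. Since the sets in $\klass$ are pairwise separated, the finitely many lower distances $d(K_i,K_l)$ for $i\neq l$ have a positive minimum $\del$. Choose $N$ so that $s_{kn}<\del$ for all $n\geq N$. For $n\geq N$ we then have $\diam E_n<\del$, so $E_n$ meets at most one $K_i$. Thus the sets $I_{K_i}\setminus N$, $i\leq m$, are pairwise disjoint. Removing a finite set does not change lower density, so $\ldens(I_{K_i}\setminus N)\geq\frac1j$ for each $i$.

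Finally, lower density is superadditive on disjoint sets, because $\liminf$ of a sum is at least the sum of the $\liminf$s. Therefore
\[
  1\geq\ldens\bigcup_{i\leq m}(I_{K_i}\setminus N)\geq\frac mj>1,
\]
which is a contradiction.

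The only delicate point is the order of choices: $k$ has to be fixed before the cover, because the infradominance witness depends on $k$. The cover has to be fixed before extracting a uniform density bound, and the separation constant $\del$ is only needed for a finite subfamily, which is why no uniform separation across all of $\klass$ is required.
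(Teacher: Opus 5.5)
Your proof is correct and follows the paper's argument: uniformize $k$, fix one $s^{\mult k}$-fine cover of $\bigcup\klass$, and note that the index sets $I_K$ have positive lower density and are pairwise almost disjoint by separation. The only difference is that the paper cites the nonexistence of an uncountable almost disjoint family of sets of positive lower density, whereas you prove that fact directly via the uniform bound $\frac1j$ and superadditivity of lower density.
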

\begin{proof}
Suppose $\klass=\{K_\alpha:\alpha<\omega_1\}$.
Since no $K_\alpha$ is $\closs$-infradominated, for each $\alpha<\omega_1$ there is $k_\alpha\in\nset$ 
such that for each $s^{\mult k_\alpha}$-fine cover $\{U^\alpha_i:i\in I\}$ of 
$K_\alpha$ we have $\ldens(I)>0$. Clearly, there is $k\in\nset$ 
such that $\{\alpha<\omega_1:k_\alpha=k\}$ is uncountable. Therefore, we may assume 
without loss of generality that $k_\alpha=k$ for all $\alpha<\omega_1$.

Suppose that $\bigcup\klass$ is $\closs$-dominated. Then 
there is an $s^{\mult k}$-fine cover of $\bigcup\klass$, say $\{E_n:n\in\nset\}$.
For each $\alpha<\omega_1$, let $I_\alpha=\{n\in\nset:E_n\cap K_\alpha\neq\emptyset\}$.
Since $\{E_n:n\in I_\alpha\}$ is an $s^{\mult k}$-fine
cover of $K_\alpha$, it follows that $\ldens I_\alpha>0$.

We claim that $\{I_\alpha:\alpha<\omega_1\}$ is an almost disjoint family: since $d(K_\alpha,K_\beta)>0$,
there is $n\in\Nset$ such that $d(K_\alpha,K_\beta)>s_n$. Therefore, 
if $m\geq n$ then $E_m$ cannot meet both $K_\alpha$ and $K_\beta$. It follows that $I_\alpha\cap I_\beta\subs n$.
Hence $\{I_\alpha:\alpha<\omega_1\}$ is an uncountable almost disjoint family of sets with positive lower density. 
It is easy to prove that such a family does not exist, see e.g.,~\cite{MisikToth}. 
We arrived at the desired contradiction proving that $\bigcup\klass$ is not $\closs$-dominated.
\end{proof}
\begin{coro}\label{produn}
If $K$ is an $s$-Kwela set and $X$ is an uncountable metric space, then $K\times X$ is not $\closs$-dominated.
\end{coro}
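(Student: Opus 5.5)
The plan is to reduce the corollary to Theorem~\ref{uknomicro}. The idea is to exhibit inside $K\times X$ an uncountable family of separated $s$-Kwela sets. Since $\DD(\closs)$ is hereditary, this is enough: if $K\times X$ were $\closs$-dominated, then so would be the union of that family, contradicting Theorem~\ref{uknomicro}.

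The first step is to choose separated slices. Since $X$ is an uncountable metric space, we can find an uncountable set $Y\subs X$ and some $\eps>0$ with $d(y,y')\geq\eps$ for all distinct $y,y'\in Y$, or at least an uncountable family of points that are pairwise at positive distance with a uniform bound.

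If $X$ is separable, no such uncountable $\eps$-discrete set exists. In that case I will instead refine the argument of Theorem~\ref{uknomicro}, which only needs pairwise positive distance, not a uniform bound. Indeed, in that proof separation is used pairwise: for $\alpha\neq\beta$ one picks $n$ with $d(K_\alpha,K_\beta)>s_n$, and this yields $I_\alpha\cap I_\beta\subs n$. So it suffices that the sets $K\times\{y\}$, for $y\in X$, are pairwise separated. In the $\el1$ product metric this is immediate:
\[
d(K\times\{y\},K\times\{y'\})\geq d(y,y')>0 .
\]
Hence any uncountable $Y\subs X$ gives an uncountable family of pairwise separated sets $K\times\{y\}$, which is what Theorem~\ref{uknomicro} requires.

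The second step is to check that each slice $K_y=K\times\{y\}$ is itself an $s$-Kwela set. The map $x\mapsto(x,y)$ is an isometry of $K$ onto $K_y$ under the $\el1$ metric, because $d((x,y),(x',y))=d(x,x')$. An isometry preserves the existence of $s^{\mult k}$-fine covers in both directions: intersect a cover of $K\times X$ with $K_y$ and project it, or conversely push a cover of $K$ forward. Therefore $K_y$ is $\closs$-dominated and not $\closs$-infradominated, exactly as $K$ is.

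The final step is to apply Theorem~\ref{uknomicro} to $\klass=\{K_y:y\in Y\}$. This shows that $\bigcup\klass=K\times Y$ is not $\closs$-dominated, and hence neither is its superset $K\times X$.

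The only delicate point is confirming that "separated" in Theorem~\ref{uknomicro} means pairwise positive lower distance, which is how the paper defines it, rather than a uniform bound. Rereading that proof shows the pairwise version is exactly what is used, so no compactness or uniform discreteness of $X$ is needed.
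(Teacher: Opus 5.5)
Your proof is correct and is the argument the paper intends, although the paper states this corollary without a written proof. The slices $K\times\{y\}$ are isometric copies of $K$ and hence $s$-Kwela, they are pairwise separated because $d(y,y')>0$ in the $\ell^1$ product metric, and Theorem~\ref{uknomicro} together with heredity of $\closs$-domination finishes it. Your opening claim that an uncountable metric space has an uncountable uniformly discrete subset is false (it fails for every separable $X$), but you don't need it; drop it, since ``separated'' in the paper means pairwise positive lower distance, which is all that Theorem~\ref{uknomicro} uses.
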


\begin{thm}\label{kwelaHaus}
Let $s\in\SEQ$ and $\phi$ be a gauge.
If $X$ contains an $s$-Kwela set and $Y$ is a perfect set, 
then $\DD_{X\times Y}(\closs)\neq\NN(\hm_{X\times Y}^\phi)$.
\end{thm}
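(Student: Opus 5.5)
The plan is to argue by contradiction. Assume $\DD_{X\times Y}(\closs)=\NN(\hm^\phi_{X\times Y})$, and let $K\subs X$ be an $s$-Kwela set. For each $y\in Y$ the slice $K\times\{y\}$ is an isometric copy of $K$, because the metric on $X\times Y$ is the $\el1$-metric. Hence each slice is again an $s$-Kwela set, so it is $\closs$-dominated and, by the assumed equality, $\hm^\phi$-null. Moreover, for $y\neq y'$ we have $d(K\times\{y\},K\times\{y'\})=d_Y(y,y')>0$, so the slices are pairwise separated. Therefore, for every uncountable $Z\subs Y$, Theorem~\ref{uknomicro} (or directly Corollary~\ref{produn}) shows that $K\times Z$ is \emph{not} $\closs$-dominated. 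By the assumed equality, $K\times Z$ is then not $\hm^\phi$-null.

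The contradiction will come from producing an uncountable $Z\subs Y$ with $\hm^\phi(K\times Z)=0$. First, since $Y$ is perfect, I will build inside $Y$ a binary system of nonempty sets $\seq{C_p:p\in\cset}$ in the spirit of the proof of Proposition~\ref{Cantonion}. The pieces will be pairwise separated on each level, with diameters $\diam C_p\leq\eta_{\abs p}$ for a sequence $\eta\in\SEQ$ that is still to be chosen. The set $Z$ is then the uncountable Cantor-type set (or, if $Y$ is not complete, a set of branch points) determined by this system. Second, I will choose $\eta$ decreasing so fast that $Z$ is negligible in the product. Since $K$ (a copy of $K\times\{y\}$) is $\hm^\phi$-null, for every $j$ there is a $2^{-j}$-fine cover $\{E^j_n\}$ of $K$ with $\sum_n\phi(\diam E^j_n)<2^{-j}$. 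I will then try to cover $K\times Z$ by the sets $E^j_n\times C_p$ with $\abs p=m$. Each such set has diameter at most $\diam E^j_n+\eta_m$, and there are $2^m$ choices of $p$.

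The main obstacle is the factor $2^m$ together with the enlargement $\eta_m$ of the diameters. The cover $\{E^j_n\}$ is countable, and a single level $m$ cannot serve all $n$ at once. My first attempt is to pick the level depending on $n$, namely $m=m(j,n)$, and to split the cover into a finite head and an infinite tail. On the head, finitely many sets $E^j_n$ have diameters bounded below, so for large $m$ the factor $2^m\phi(\diam E^j_n+\eta_m)$ can be compared with $\phi$ evaluated at a product-scale cover. On the tail, I would use that $\phi$ is small there. If this fails for a general gauge, the fallback is to exploit the freedom of choosing $Z$ after $K$ and $\phi$ are fixed. I would diagonalize over $j$ so that at stage $j$ the level $m_j$ is fixed first, and only afterwards $\eta_{m_j}$ is made small enough that $2^{m_j}$ copies of a slightly thickened $\hm^\phi$-small cover still have total $\phi$-sum below $2^{-j}$. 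For this I would use right-continuity of $\phi$ at the relevant diameters, or pass to $\wh\phi$-type regularizations as in~\cite{micro1}, which do not change the null ideal. Once $\hm^\phi(K\times Z)=0$ is secured, the first paragraph yields the contradiction.
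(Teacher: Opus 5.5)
Your reduction matches the paper's. Under the assumed equality every slice $K\times\{y\}$ is $\hm^\phi$-null, so $\hm^\phi(K)=0$, and Corollary~\ref{produn} makes $K\times Z$ non-$\closs$-dominated for every uncountable $Z\subs Y$. Everything therefore hinges on finding an uncountable $Z\subs Y$ with $\hm^\phi(K\times Z)=0$, and that step is not established.

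\textbf{The fallback fails.} At stage $j$ you use one level $m_j$ and one thickening $\eta_{m_j}>0$ for the whole cover $\{E^j_n\}_n$. But $\phi(\diam E^j_n+\eta_{m_j})\geq\phi(\eta_{m_j})>0$ for every $n$. Hence $\sum_n 2^{m_j}\phi(\diam E^j_n+\eta_{m_j})=\infty$ as soon as the cover is infinite, and $\hm^\phi(K)=0$ only supplies countable covers. Right-continuity at countably many diameters gives no uniform $\eta$. You noted yourself that one level cannot serve all $n$; the fallback does exactly that.

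\textbf{The first attempt does not repair it.} To avoid the same divergence, $m(j,n)$ must tend to $\infty$ along the tail. The multiplicity $2^{m(j,n)}$ is then unbounded there, and ``$\phi$ is small on the tail'' does not bound $\sum_n 2^{m(j,n)}\phi(\diam E^j_n)$. Also, $\eta$ is a single sequence serving all stages, so each level $m$ may carry only finitely many constraints, and your plan does not arrange this.

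\textbf{The missing idea} is slack that absorbs the multiplicity, i.e.\ an upgrade of $\hm^\phi$-nullness. The paper does this as follows:
\begin{enumerate}
\item choose a gauge $\psi$ with $\phi/\psi\to0$ and still $\hm^\psi(K)=0$;
\item choose a gauge $\tau$ with $\psi\tau\geq\phi$;
\item build $P\subs Y$ by a Luzin scheme with $\tau(\diam U_p)<3^{-\abs p}$, so $\pack^\tau(P)=0$;
\item apply Howroyd's inequality $\hm^{\psi\tau}(K\times P)\leq\hm^\psi(K)\pack^\tau(P)=0$.
\end{enumerate}

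Your framework can also be repaired elementarily by reversing the order of choices:
\begin{enumerate}
\item For each $j$, first fix a $2^{-j}$-fine cover with $\sum_n\phi(\diam E^j_n)<8^{-j}$.
\item Take nondecreasing $g^j_n\to\infty$ with $\sum_n g^j_n\phi(\diam E^j_n)\leq 2\cdot 8^{-j}$.
\item Set $m(j,n)=j+\lfloor\log_2 g^j_n\rfloor$; then each level is used by only finitely many pairs $(j,n)$.
\item Only now choose $\eta_m\leq 2^{-m}$ so small that $\phi(\diam E^j_n+\eta_m)\leq\phi(\diam E^j_n)+2^{-n-m}4^{-j}$ for those finitely many pairs (by right-continuity), and build $Z$.
\end{enumerate}
Then $\{E^j_n\times C_p:\abs p=m(j,n)\}$ covers $K\times Z$, is $2^{1-j}$-fine, and has $\phi$-sum at most $3\cdot4^{-j}$.

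Finally, drop the ``set of branch points'' alternative for incomplete $Y$: that set is countable. You need a Cantor set inside $Y$, which is what ``perfect'' must mean here; the paper's Luzin scheme tacitly uses this too.
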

\begin{proof}
Let $K\subs X$ be an $s$-Kwela set and let $\phi$ be a gauge. In the case $\hm^\phi(K)>0$ pick arbitrary $y\in Y$ 
and let $E=K\times\{y\}$. Then clearly $E$ is $\closs$-dominated and $\hm^\phi(E)>0$, so we are done.

For the other case $\hm^\phi(K)=0$ we will employ Howroyd's  result~\cite{MR1362951} which states that
for any gauges $\psi,\tau$ and any sets $X,P$ 
\begin{equation}\label{howroyd}
  \hm^{\psi\cdot\tau}(X\times P)\leq\hm^\psi(X)\cdot\pack^\tau(P)
\end{equation}
where $\pack^\tau$ is the \emph{pseudo-packing measure}, see~\cite{MR1362951}.

Choose a gauge $\psi\prec\phi$ such that $\hm^\psi(K)=0$ and then a gauge $\tau$ 
such that $\psi\cdot\tau\geq\phi$. Then find a perfect set $P\subs Y$ such that $\pack^\tau(P)=0$.
\begin{cproof}
Build a Luzin scheme $\seq{U_p:p\in\cset}$ of closed nonempty subsets of $Y$ such that 
if $p\subs q$, then $U^p\supseteq U_q$, if $p,q$ are incompatible, then $U_p\cap U_q=\emptyset$ and
$\tau(\diam U_p)<3^{-\abs p}$. Let $P=\bigcap_{n\in\Nset}\bigcup_{p\in2^n}U_p$ be the attractor.
\end{cproof}
By~\eqref{howroyd},
\[
  \hm^\phi(K\times P)\leq\hm^{\psi\cdot\tau}(K\times P)\leq\hm^\psi(K)\cdot\pack^\tau(P)=0,
\]
while, by Corollary~\ref{produn}, $K\times P$ is not $\closs$-dominated, because $P$ is uncountable.
\end{proof}

\begin{coro}
Let $s\in\SEQ$ and $\phi$ be a gauge.
\begin{enum}
\item $\DD_X(\closs)\neq\NN(\hm_X^\phi)$ if $X$ is $\el1$, $\el2$ or $c_0$,
\item $\DD_{X}(\closs)\neq\NN(\hm_{X}^\phi)$ if $s$ is doubling and $X$ is $\Pset$,
\item $\DD_{X}(\closs)\neq\NN(\hm_{X}^\phi)$ if $s$ is subgeometric and $X$ is an onion space.
\end{enum}
\end{coro}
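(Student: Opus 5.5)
The plan is to apply Theorem~\ref{kwelaHaus} in each case, writing the ambient space as a product $X'\times Y$ of a space containing an $s$-Kwela set and a perfect set. Since domination and Hausdorff nullness both pass from a subspace to the whole space, it is enough to find inside $X$ a subspace bi-Lipschitz equivalent to such a product. More precisely, we need a subspace where domination and $\hm^\phi$-nullness are preserved both ways. Then the set $E$ produced in the proof of Theorem~\ref{kwelaHaus} lies in $X$ and is either $\closs$-dominated with positive $\hm^\phi$-measure, or $\hm^\phi$-null but not $\closs$-dominated.

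The subtle point is that $\hm^\phi$-nullness is not preserved by bi-Lipschitz maps for a general gauge $\phi$. So I would avoid bi-Lipschitz transfer of $\NN(\hm^\phi)$ altogether and repeat the argument of Theorem~\ref{kwelaHaus} directly inside $X$, as follows.

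\textbf{Case (i).} Take an $s$-Kwela set $K\subs\Cube(2,s)$ (Theorem~\ref{KwelaSet}). By~\cite{Vestfrid1994}, the ultrametric compactum $\Cube(2,s)\times C$, with $C$ a perfect ultrametric compactum, embeds isometrically into $X$. An isometry preserves $\closs$-domination, infradomination and $\hm^\phi$ exactly. So it suffices to choose the perfect factor $C$ with $\pack^\tau(C)=0$ for the gauge $\tau$ from the proof of Theorem~\ref{kwelaHaus}, e.g.\ $C=\Cube(2,r)$ with $r$ decaying fast. Then we run the two cases. If $\hm^\phi(K)>0$, the set $K\times\{y\}$ is the witness. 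Otherwise Howroyd's inequality~\eqref{howroyd} gives $\hm^\phi(K\times C)=0$, while Corollary~\ref{produn} shows $K\times C$ is not $\closs$-dominated. This is a subset of the ultrametric product, and we transport it isometrically into $X$.

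\textbf{Cases (ii) and (iii).} The embeddings are only bi-Lipschitz: \cite{Hughes} for $\Pset$, and the map $\varphi$ of Theorem~\ref{tak} for onion spaces. For the domination side, the proofs of Theorems~\ref{kwela2} and~\ref{tak} already show that images of $\closs$-dominated and non-$\closs$-dominated sets keep those properties, using the doubling or subgeometric hypothesis. Note also that Theorem~\ref{uknomicro} applies in $X$ directly to the images $\varphi(K\times\{x\})$, which are separated Kwela sets.

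For the measure side I would avoid transfer entirely. Let $K'=\varphi(K\times\{x_0\})$ be an $s$-Kwela set in $X$. If $\hm^\phi(K')>0$ we are done. If $\hm^\phi(K')=0$, choose $\psi\prec\phi$ with $\hm^\psi(K')=0$ and $\tau$ with $\psi\tau\geq\phi$. Then pick a perfect $P$ in the second factor so that $Z=\varphi(K\times P)$ satisfies $\hm^\phi(Z)=0$. To get this, view $Z$ as a Lipschitz image of $K'\times P'$, where $P'=\varphi(\{k_0\}\times P)$ and the map is taken within the ultrametric structure. Alternatively, cover $Z$ directly: combine covers of $K'$ of small $\sum\psi$ with the Luzin-scheme pieces of $P$, whose $\tau$-sizes are tiny. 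The Lipschitz constant $L$ only rescales diameters, and this can be absorbed by choosing $P$ with $\tau(L\,\diam U_p)<3^{-|p|}$.

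The main obstacle is exactly this last estimate: making the Howroyd-type product bound survive a bi-Lipschitz distortion for an arbitrary gauge $\phi$. I would handle it by building $P$ after fixing $L$ and $\tau$, so the product covering argument works directly in $X$. Nonmembership of $Z$ in $\DD(\closs)$ then follows from Theorem~\ref{uknomicro} applied to the uncountable separated family $\{\varphi(K\times\{x\}):x\in P\}$.
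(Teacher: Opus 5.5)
\textbf{Verdict.} Case (i) is correct and matches the paper's argument: Theorem~\ref{kwelaHaus} is run inside the ultrametric product, and the witness is carried into $X$ by an isometry. Cases (ii) and (iii) have a genuine gap, at exactly the step you call the main obstacle.

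\textbf{The gap.} Absorbing $L$ into $\tau$ does not resolve it. Suppose $\{E_n\}$ covers $K'=\varphi(K\times\{x_0\})$ with $\sum\psi(\diam E_n)$ small. The first-coordinate projection $A_n$ of $\varphi^{-1}(E_n\cap K')$ can have diameter up to $L\diam E_n$. Hence the pieces $\varphi(A_n\times U_p)$ covering $Z$ can have diameter up to $L^2\diam E_n$. Making $P$ sparse controls the number of pieces against $\tau$. You are still left with $\sum_n\psi(L^2\diam E_n)$, and for a gauge without a doubling property this is not small merely because $\sum_n\psi(\diam E_n)$ is. The distortion sits in the $K$-factor, whose cover is dictated by $K'$ and cannot be chosen. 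What you actually need is something like $\hm^{\phi(L^2\,\cdot)}(K')=0$. The dichotomy $\hm^\phi(K')>0$ versus $\hm^\phi(K')=0$ does not provide this. The ``Lipschitz image of $K'\times P'$'' variant fails for the same reason: Lipschitz maps need not preserve $\hm^\phi$-null sets. So the bi-Lipschitz transfer cannot be avoided; you need a reason why it works in these spaces.

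\textbf{How the paper closes it in (iii).} Condition~\eqref{embed1} makes $\Cube(2,s)$, and hence $\Cube(2,s)\times\Cube(2,\lambda^2)$, a doubling space. In a doubling space, every set of diameter $\delta$ splits into $N=N(L)$ pieces whose $\varphi$-images have diameter at most $\delta$. Therefore a bi-Lipschitz map on a doubling space preserves $\hm^\phi$-nullness in both directions, for every gauge $\phi$; this is the lemma stated right after the corollary. One then runs Theorem~\ref{kwelaHaus} in the cube and pushes the witness forward by $\varphi$.

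\textbf{How the paper closes it in (ii).} This remedy is not available along your route. When $s$ is merely a doubling sequence, $\Cube(2,s)$ need not be a doubling space (take $s=\harm$), so the Hughes embedding is the wrong tool. Instead, take the $s$-Kwela set $K\subs\Pset$ given by Theorem~\ref{kwela2} and apply Theorem~\ref{kwelaHaus} in $\Pset\times\Cset$. Then transport the witness by the isometric bijection $F(f,x)(n)=2f(n)+x(n)$ of $\Pset\times\Cset$ onto $\Pset$. No measure transfer is needed at all.
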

\begin{proof}
(i) By Theorem~\ref{KwelaSet}, there is an $s$-Kwela set $K\subs\Cube(2,s)$. By Theorem~\ref{kwelaHaus}
and its proof, there is a perfect set $P\subs\Cset$ such that $K\times P$ is not $\closs$-dominated, 
but $\hm^\phi(K\times P)=0$. Since both $K$ and $P$ are ultrametric, so is $K\times P$, and therefore it 
is isometrically embedded into $X$, witnessing (i).

(ii) By Theorem~\ref{kwela2}, there is an $s$-Kwela set $K\subs\Pset$. It is straightforward that the mapping
$F:\Pset\times\Cset\to\Pset$ defined by $F(f,x)(n)=2f(n)+x(n)$ is an isometric bijection.
Apply Theorem~\ref{kwelaHaus} to get a set $E\subs\Pset\times\Cset$ of zero Hausdorff measure that is not dominated
and consider the set $F(E)\subs\Pset$: it is of zero Hausdorff measure and it is not dominated.

(iii) 
This is proved the same way as (i) using Theorem~\ref{tak} and its proof. We refer to~\eqref{embed11}.
The only problem is to show that for a set $E\subs\Cube(2,s)\times\Cube(2,\lambda^2)$,
$\hm^\phi(E)=0$ if and only if $\hm^\phi(\varphi(E))=0$.
It follows from~\eqref{embed1} that $\Cube(2,s)$ is a doubling space, and consequently so is 
$\Cube(2,s)\times\Cube(2,\lambda^2)$. Since $\varphi$ is bilipschitz, it is thus enough to prove the 
following lemma whose routine proof is omitted.
\end{proof}
\begin{lem}
Let $Y$ is a doubling space and $\varphi:Y\to X$ a bilipschitz mapping and $E\subs Y$. 
Then $\hm^\phi(E)=0$ if and only if $\hm^\phi(\varphi(E))=0$.
\end{lem}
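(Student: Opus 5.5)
The plan is to prove both implications with the same device: compare diameters through the bi-Lipschitz constants, and use the doubling property of $Y$ to absorb the multiplicative constant. A constant inside the gauge, as in $\phi(Lr)$ versus $\phi(r)$, cannot be controlled for an arbitrary gauge. It can, however, be traded for a bounded number of pieces, that is, a constant factor outside the gauge. The only properties of the gauge I will use are that $\phi$ is nondecreasing and that $\hm^\phi$ is computed with $\delta$-fine countable covers.

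First I record a splitting fact for the doubling space $Y$ with doubling constant $Q$. Fix $M\geq1$ and let $k$ be the least integer with $2^{1-k}\leq 1/M$. Then every set $A\subs Y$ with $0<\diam A=t$ can be written as a union of at most $N=Q^k$ sets $A^1,\dots,A^N\subs A$ with $\diam A^j\leq t/M$. To see this, note that $A\subs B(a,t)$ for any $a\in A$. Iterating the doubling property $k$ times covers $B(a,t)$ by $Q^k$ balls of radius $t2^{-k}$, whose diameters are at most $t2^{1-k}\leq t/M$. Then intersect these balls with $A$. Sets of diameter $0$ need no splitting. Let $L$ be a common Lipschitz constant for $\varphi$ and $\varphi^{-1}$ (the latter on $\varphi(Y)$).

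For the forward implication, suppose $\hm^\phi(E)=0$, and fix $\delta>0$ and $\eta>0$. Choose a $\delta$-fine cover $\{E_n\}$ of $E$ with $\sum_n\phi(\diam E_n)<\eta/N$, where $N$ is the splitting number for $M=L$. Split each $E_n$ into pieces $E_n^j\subs E_n$ with $\diam E_n^j\leq\diam E_n/L$. Then $\diam\varphi(E_n^j)\leq\diam E_n<\delta$, and monotonicity of $\phi$ gives
\[
  \sum_{n,j}\phi(\diam\varphi(E_n^j))\leq N\sum_n\phi(\diam E_n)<\eta .
\]
Hence $\hm^\phi_\delta(\varphi(E))=0$ for every $\delta>0$, and so $\hm^\phi(\varphi(E))=0$.

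The converse is symmetric, but the splitting must again be done in $Y$, since $X$ need not be doubling; this is the one point where care is needed. Suppose $\hm^\phi(\varphi(E))=0$ and take a $\delta$-fine cover $\{F_n\}$ of $\varphi(E)$ with small gauge sum. We may assume $F_n\subs\varphi(Y)$. The sets $A_n=\varphi^{-1}(F_n)$ cover $E$ and satisfy $\diam A_n\leq L\diam F_n$. Splitting each $A_n$ in $Y$ with $M=L$ gives pieces of diameter at most $\diam F_n<\delta$, and the same counting bound yields $\hm^\phi(E)=0$. I expect no real obstacle here. The only step requiring thought is the observation that doubling lets one pass from a cover of $\varphi(E)$ to a cover of $E$ without enlarging diameters, and that it is applied in the domain both times.
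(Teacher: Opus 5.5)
Your proof is correct: splitting each covering set in the doubling space $Y$ into boundedly many pieces absorbs the bi-Lipschitz distortion as a constant factor outside the gauge. Doing this in $Y$ in both directions is exactly the routine argument the paper has in mind, since it omits the proof. The only thing to add is that you should take $L\geq 1$, so that $M=L$ is admissible in your splitting fact.
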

\section{Cardinal invariants}
\label{sec:invar}
The following are uniformity, covering, additivity and cofinality of an ideal $\II$ of subsets of $X$.
\begin{alignat*}{2}
  &\non\II &&=\min\{\abs{E}:E\notin\II\}, \\
  &\cov\II &&=\min\{\abs{\AAA}:\AAA\subs\II\land\bigcup\AAA=X\}, \\
  &\add\II &&=\min\{\abs{\AAA}: \AAA\subs\II\land\bigcup\AAA\notin\II\}, \\
  &\cof\II &&=\min\{\abs{\AAA}:\AAA\text{ is a basis of }\mathscr{I}\}. 
\end{alignat*}
We provide some information on these cardinal characteristics of the ideals of
dominated and sharply dominated sets. 

Recall that $\NN$ and $\MM$ denote the \si ideals of Lebesgue null sets and meager 
sets, respectively. The relations of the cardinal invariants of these ideals provable in ZFC are 
captured by the \emph{Cicho\'n diagram}. The symbols $\mathfrak b$ and $\mathfrak d$
denote the so called bounding and dominating numbers. 
The arrows point from the smaller to the larger cardinal.
We refer to \cite{invariants2} for details. 
\smallskip
\begin{center}
\begin{tikzpicture}[
  x=0.9cm,
  y=0.65cm,
  every node/.style={inner sep=1.5pt},
  every path/.style={->,>=stealth}
]
\node (a1) at (0,0) {$\aleph_1$};
\node (addN) at (2,0) {$\add\NN$};
\node (addM) at (4,0) {$\add\MM$};
\node (covM) at (6,0) {$\cov\MM$};
\node (nonN) at (8,0) {$\non\NN$};

\node (covN) at (2,3) {$\cov\NN$};
\node (nonM) at (4,3) {$\non\MM$};
\node (cofM) at (6,3) {$\cof\MM$};
\node (cofN) at (8,3) {$\cof\NN$};
\node (c) at (10,3) {$\cont$};

\node (b) at (4,1.5) {$\mathfrak b$};
\node (d) at (6,1.5) {$\mathfrak d$};

\draw (a1) -- (addN);
\draw (addN) -- (addM);
\draw (addM) -- (covM);
\draw (covM) -- (nonN);
\draw (nonN) -- (cofN);
\draw (cofN) -- (c);

\draw (addN) -- (covN);
\draw (covN) -- (nonM);
\draw (nonM) -- (cofM);
\draw (cofM) -- (cofN);

\draw (addM) -- (b);
\draw (b) -- (nonM);
\draw (b) -- (d);
\draw (covM) -- (d);
\draw (d) -- (cofM);
\end{tikzpicture}
\end{center}
We also make use of the \emph{transitive covering of $\MM$}
\[
  \covs\MM=\min\{\abs X:X\subs\Rset\land\exists M\in\MM\ M+X=\Rset\}.
\]
\subsection*{Additivity and cofinality}
When calculating cofinality and additivity (that are notoriously difficult to 
calculate) of $\DD(\closs)$, we will take advantage of Kwela sets, in particular their key property: 
\begin{thm}
Let $\kappa>\omega$ be a cardinal.
If $X$ contains $\kappa$ many separated $s$-Kwela sets, then $\add\DD(\closs)=\omega_1$ and 
$\kappa\leq\cof\DD(\closs)\leq\cont$. Moreover, there are $\kappa$ many disjoint subsets in $X$ 
that are not $\closs$-dominated.
\end{thm}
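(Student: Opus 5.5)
Everything will follow from Theorem~\ref{uknomicro} applied to subfamilies of the given family. Fix a family $\klass=\{K_\alpha:\alpha<\kappa\}$ of separated $s$-Kwela sets in $X$. Any subfamily is still a family of separated $s$-Kwela sets, so by Theorem~\ref{uknomicro} the union of any uncountable subfamily fails to be $\closs$-dominated. Throughout I assume, as the statement implicitly does, that $X$ is separable, so $\abs X\leq\cont$. I also use that $s$ is multiplicatively complete, so $\DD(\closs)$ is a \si ideal.

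\emph{Additivity.} Since $\DD(\closs)$ is a \si ideal, $\add\DD(\closs)\geq\omega_1$. Conversely, $\kappa>\omega$, so $\{K_\alpha:\alpha<\omega_1\}$ is a family of $\omega_1$ members of $\DD(\closs)$, each $\closs$-dominated by the definition of a Kwela set, whose union is not $\closs$-dominated by Theorem~\ref{uknomicro}. Hence $\add\DD(\closs)=\omega_1$.

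\emph{Disjoint non-dominated sets.} Partition $\kappa$ into $\kappa$ many pairwise disjoint sets $\Gamma_\xi$ ($\xi<\kappa$), each of cardinality $\omega_1$; this is possible because $\kappa\cdot\omega_1=\kappa$. Put $Z_\xi=\bigcup_{\alpha\in\Gamma_\xi}K_\alpha$. Separated sets are disjoint, so the $Z_\xi$ are pairwise disjoint, and each $Z_\xi$ is not $\closs$-dominated by Theorem~\ref{uknomicro}.

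\emph{Cofinality, lower bound.} Let $\mathscr B$ be a basis of $\DD(\closs)$ and suppose $\abs{\mathscr B}<\kappa$. Each $K_\alpha$ lies in some $B_\alpha\in\mathscr B$. If $\kappa$ has uncountable cofinality, some single $B\in\mathscr B$ contains uncountably many $K_\alpha$, so $B$ contains the union of an uncountable subfamily and is therefore not $\closs$-dominated, which is a contradiction. If $\operatorname{cf}\kappa=\omega$, this pigeonhole fails, and I expect this to be the main obstacle. I would first try the following: any $B\in\DD(\closs)$ contains only countably many $K_\alpha$, again by Theorem~\ref{uknomicro}. Hence $\abs{\mathscr B}\cdot\omega\geq\kappa$, and so $\abs{\mathscr B}\geq\kappa$ because $\kappa$ is uncountable. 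This counting argument handles both cases at once and removes the need for the pigeonhole.

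\emph{Cofinality, upper bound.} Every $\closs$-dominated set $E$ has, for each $k$, an $s^{\mult k}$-fine cover, which can be taken to consist of open sets by slightly enlarging each member. By Lemma~\ref{basisDomi2}, $E$ is then contained in a $G_\delta$ set of the form $\bigcap_k\bigcup_{n\in\parti_k}U_n$ with $\seq{U_n}$ open and $s$-fine, and such a set is itself $\closs$-dominated. A separable metric space has at most $\cont$ open sets and hence at most $\cont$ such $G_\delta$ sets. These form a basis, so $\cof\DD(\closs)\leq\cont$.
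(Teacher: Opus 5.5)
Your proof is correct and follows the paper's argument essentially step for step. Additivity and the $\kappa$ disjoint non-dominated sets both come from Theorem~\ref{uknomicro} applied to uncountable subfamilies; the lower bound on $\cof\DD(\closs)$ comes from counting; and the upper bound uses the basis of $\closs$-dominated $G_\delta$ sets, which you spell out more explicitly than the paper's one-line remark. Your worry about $\operatorname{cf}\kappa=\omega$ is unfounded: the paper's pigeonhole only needs some $B\in\mathscr B$ to contain \emph{uncountably} many $K_\alpha$, and that follows for every uncountable $\kappa$ from your bound $\kappa\leq\abs{\mathscr B}\cdot\omega$, so no case split is needed.
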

\begin{proof}
Let $\{K_\alpha:\alpha<\kappa\}$ be the family of separated $s$-Kwela sets. Then
by Theorem~\ref{uknomicro}, $\bigcup_{\alpha<\omega_1}K_\alpha$ is not $\closs$-dominated,
and $\add\DD(\closs)=\omega_1$ follows.

Now suppose $\mathscr B$ is a basis of $\DD(\closs)$. If $\abs{\mathscr B}<\kappa$, 
then by the pigeonhole principle there is $B\in\mathscr B$ such that 
$I=\{\alpha<\kappa:K_\alpha\subs B\}$ is uncountable and, again by Theorem~\ref{uknomicro}, 
$\bigcup_{\alpha\in I}K_\alpha\subs B$ is not $\closs$-dominated: a contradiction. Hence
$\abs{\mathscr B}\geq\kappa$. The inequality $\cof\DD(\closs)\leq\cont$ follows from  
$\DD(\closs)$ being $G_\del$-based. 

Let $\{I_\beta:\beta<\kappa\}$ be a partition of $\kappa$ into uncountable sets.
The sets $A_\beta=\bigcup_{\alpha\in I_\beta}K_\alpha$ are by Theorem~\ref{uknomicro} 
not $\closs$-dominated and  they are clearly disjoint.
\end{proof}
\begin{coro}\label{addi}
Let $s\in\SEQ$. Suppose that 
\begin{enum}
\item either $X$ is $\el1$, $\el2$ or $c_0$,
\item or $X=\Pset$ and $s$ is doubling,
\item or $X$ is an onion space and $s$ is subgeometric.
\end{enum}
Then $\add\DD_X(\closs)=\omega_1$ and $\cof\DD_X(\closs)=\cont$.
\end{coro}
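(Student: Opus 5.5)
The plan is to derive Corollary~\ref{addi} directly from the preceding theorem with $\kappa=\cont$. In each of the three cases we only need $\cont$ many pairwise separated $s$-Kwela sets in $X$, and the existence theorems of Section~\ref{sec:kwela2} give exactly that. In case (i) we invoke Theorem~\ref{kwela1}, in case (ii) Theorem~\ref{kwela2}, which uses that $s$ is doubling, and in case (iii) Theorem~\ref{tak}, which uses that $s$ is subgeometric. Since $\cont>\omega$, the hypothesis of the preceding theorem is met.

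Applying that theorem then yields $\add\DD_X(\closs)=\omega_1$ and $\cont\le\cof\DD_X(\closs)\le\cont$, hence $\cof\DD_X(\closs)=\cont$. The theorem's proof justifies the upper bound by $\DD(\closs)$ being $G_\delta$-based. I would check briefly that this applies in each space, i.e.\ that every $\closs$-dominated set lies in a $G_\delta$ $\closs$-dominated set; then the number of $G_\delta$ sets bounds the cofinality. The $G_\delta$ hull should come from Lemma~\ref{basisDomi2}: replace each $E_n$ by a slightly larger open set of diameter still $<s_n$ and take $\bigcap_k\bigcup_{n\in\parti_k}$. This works because $\diam E_n<s_n$ strictly, so there is room to enlarge.

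The only point needing care is the count of $G_\delta$ sets. In $\el1$, $\el2$ and $c_0$ the spaces are separable, as is $\Pset$, so there are exactly $\cont$ many open and hence $G_\delta$ sets. For an onion space the paper's convention is that spaces are separable when speaking of dominated sets; in a nonseparable complete space the upper bound would have to be read within that convention. I expect this remark about separability (and the $G_\delta$ basis) to be the only potential obstacle; everything else is a direct citation.
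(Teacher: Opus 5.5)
Your proposal is correct and follows essentially the paper's intended argument: the corollary is the preceding theorem with $\kappa=\cont$, fed by Theorems~\ref{kwela1}, \ref{kwela2} and~\ref{tak}. Your justification that $\DD(\closs)$ is $G_\delta$-based via Lemma~\ref{basisDomi2} is right, as is your separability remark for the count of $G_\delta$ sets; these fill in the one detail the paper leaves implicit. The $G_\delta$ argument works because each $E_n$ can be enlarged to an open set of diameter still $<s_n$.
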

On the other hand, even for a countable $S\subs\SEQ$ the additivity of $\DD(S)$ may be 
(consistently, of course) larger than $\omega_1$:
\begin{ex}
Let $\harm=\seq{\frac1{n+1}:n\in\nset}$ be the harmonic sequence 
and $X$ an analytic metric space.
The set $S=\{\harm^k:k\in\nset\}$ is countable and multiplicatively complete. 
By~\cite{micro1}, $\DD(S)=\{E\subs X:\hdim E=0\}$.
The latter is known to have additivity 
at least $\add\NN$, see, e.g., \cite{invariants2}. 
Likewise, if $q>0$ and $S$ is the multiplicative completion of $\{\harm^{1/p}:p>q\}$,
then $\DD(S)=\{E\subs X:\hdim E\leq q\}$ and hence $\add\DD(S)\geq\add\NN$.
\end{ex}

\subsection*{Kwela's problem}
Let $S\subs\SEQ$ and let $\JJ(S)$ be the family of sets $E\subs X$ such that for every
$s\in S$ there is a set $I\subs\nset$ such that $\udens I=0$ and an $s$-fine cover 
$\seq{E_n:n\in I}$ of $E$. It is clear that if $E\in\JJ(S)$, then $E$ is 
$S$-infradominated and \emph{a fortiori} $S$-dominated.

This definition of $\JJ(S)$ is inspired by Kwela~\cite[4.2]{MR3482702} who defines, 
in effect, $\JJ(\clos\geom)$ and asks in~\cite[4.7]{MR3482702} if Martin's Axiom 
implies that $\add\JJ(\clos\geom)=\cont$. We solve his problem as follows.
We will use an equivalent definition:
\begin{lem}\label{problem1}
$E\in\JJ(S)$ if and only if for each $s\in S$ there is a strictly increasing
$\varphi\in\Pset$ such that $\frac{\varphi(n)}{n}\to\infty$ and 
a $s{\circ}\varphi$-fine cover of $E$.
\end{lem}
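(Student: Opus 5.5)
The plan is to translate between index sets of upper density zero and increasing enumerations growing superlinearly. The key fact is elementary: for an infinite $I\subs\nset$ with increasing enumeration $\varphi$, we have $\abs{I\cap\varphi(n)}=n$ (up to an off-by-one shift), so $\udens I=\limsup_n n/\varphi(n)$. Hence $\udens I=0$ if and only if $\varphi(n)/n\to\infty$. Finite $I$ must be handled separately.

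For the forward implication, fix $s\in S$ and let $I$ with $\udens I=0$ carry an $s$-fine cover $\seq{E_n:n\in I}$ of $E$.

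\begin{itemyze}
\item If $I$ is infinite, let $\varphi$ be its increasing enumeration and set $F_n=E_{\varphi(n)}$. Then $\diam F_n<s_{\varphi(n)}=(s\comp\varphi)_n$, so $\seq{F_n}$ is an $s\comp\varphi$-fine cover of $E$, and $\varphi(n)/n\to\infty$ by the fact above.
\item If $I$ is finite, or more generally to avoid a degenerate case, enlarge $I$ to $I\cup\{k^2:k\in\nset\}$ and assign $E_n=\emptyset$ at the new indices. The enlarged set still has upper density zero, and the enlarged family is still an $s$-fine cover (empty sets have diameter $0<s_n$). Then apply the previous case.
\end{itemyze}

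For the converse, given a strictly increasing $\varphi$ with $\varphi(n)/n\to\infty$ and an $s\comp\varphi$-fine cover $\seq{F_n}$ of $E$, let $I=\varphi(\nset)$ and $E_{\varphi(n)}=F_n$. Since $\varphi$ is injective, this is well defined. It gives an $s$-fine cover indexed by $I$, and $\udens I=\limsup n/\varphi(n)=0$.

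The only point needing care is the density computation. For $\varphi(n)\le m<\varphi(n+1)$ we have $\abs{I\cap m}/m\le (n+1)/\varphi(n)$, which tends to $0$ because $\varphi(n)/n\to\infty$. Conversely, $\abs{I\cap(\varphi(n)+1)}/(\varphi(n)+1)$ is comparable to $n/\varphi(n)$. This step is routine, so I expect no real obstacle beyond handling the finite and indexing conventions of $\nset$ carefully.
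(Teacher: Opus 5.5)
Your proof is correct. The paper states this lemma without proof, treating it as an immediate reformulation of the definition of $\JJ(S)$. Your argument supplies exactly the omitted routine verification: the identity $\udens I=\limsup_n n/\varphi(n)$ for the increasing enumeration $\varphi$ of an infinite $I$, together with padding a finite $I$ by a density-zero set of indices carrying empty sets.
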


The following theorem obviously solves Kwela's problem.
\begin{thm}\label{problem2}
Suppose that $X$ is ultrametric or \si totally bounded. If $S\subs\SEQ$, 
then $\add\JJ(S)\geq\add\NN$ and in particular $\JJ(S)$ is a \si ideal.
\end{thm}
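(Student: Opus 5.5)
The plan is to fix $s\in S$ and prove that the ideal $\JJ(s)$ of sets admitting, for this single $s$, an $s$-fine cover indexed by a set of upper density $0$ satisfies $\add\JJ(s)\geq\add\NN$. Since $\JJ(S)=\bigcap_{s\in S}\JJ(s)$, and the additivity of an intersection of ideals is at least the minimum of their additivities, this gives the theorem. The statement that $\JJ(S)$ is a \si ideal is the case $\kappa=\omega<\add\NN$, so it needs no separate argument.

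\emph{Setup and the common growth function.} Let $\kappa<\add\NN$ and $E_\alpha\in\JJ(s)$ for $\alpha<\kappa$. By Lemma~\ref{problem1}, each $E_\alpha$ has a strictly increasing $\varphi_\alpha$ with $\varphi_\alpha(n)/n\to\infty$ and an $s\circ\varphi_\alpha$-fine cover $\seq{E^\alpha_n}$. We use two standard facts: $\add\NN\leq\mathfrak b$, and Bartoszy\'nski's localization characterization of $\add\NN$. The latter says that for any $h\to\infty$, fewer than $\add\NN$ functions in $\Pset$ are eventually captured by a single slalom $\sigma$ with $\abs{\sigma(n)}\leq h(n)$.
\begin{itemyze}
\item Step 1. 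Since $\kappa<\mathfrak b$, apply it to the functions $m\mapsto\min\{n:\varphi_\alpha(n)\geq mn\}$. This yields a strictly increasing $\psi$ with $\psi(n)/n\to\infty$ and $\psi\leq^*\varphi_\alpha$ for all $\alpha$.
\item Step 2. Because $s$ is decreasing, each cover is then eventually $s\circ\psi$-fine. Splitting $\kappa$ into countably many classes according to where ``eventually'' starts and which finitely many initial sets are involved, we reduce to the case where every cover is $s\circ\psi$-fine from index $1$ on. The countably many classes are then recombined at the end by interleaving indices; that recombination is routine once the single-class case works with room to spare.
\end{itemyze}

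\emph{Coding and localization.} Next we code the cover sets by integers, and this is where the hypothesis on $X$ enters.
\begin{itemyze}
\item If $X$ is ultrametric, every set of diameter $<r$ lies in a closed ball of diameter $<r$. Taking a countable dense set $D$, the balls of the countably many distinct diameters attained around points of $D$ give, for each $n$, a countable family $\mc B_n$ of sets of diameter $<s_{\psi(n)}$. Every $E^\alpha_n$ can be enlarged to a member of $\mc B_n$ with no loss.
\item If $X=\bigcup_j X_j$ is \si totally bounded, first split each $E_\alpha$ into the pieces $E_\alpha\cap X_j$, at the cost of another countable union. Within $X_j$, cover by finitely many sets of diameter $<s_{\psi'(n)}$ in place of $E^\alpha_n$, where $\psi'$ is a slightly slower function than $\psi$ (replace $\psi$ by, e.g., $\psi(\lfloor n/2\rfloor)$). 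The point is that the slack between $\psi'$ and $\psi$ absorbs the bounded enlargement factor in a totally bounded net. Each $E^\alpha_n$ is then replaced by a bounded number of net members.
\end{itemyze}
In either case $E_\alpha$ is covered by $\seq{B_{f_\alpha(n)}:n}$ with $f_\alpha\in\Pset$ and $B_{f_\alpha(n)}\in\mc B_n$. Now fix a slowly growing $h\to\infty$ and localize all $f_\alpha$, $\alpha<\kappa$, by one slalom $\sigma$ with $\abs{\sigma(n)}\leq h(n)$.

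\emph{Re-indexing, the main obstacle.} The union $\bigcup_\alpha E_\alpha$ is covered by $\{B_m:m\in\sigma(n),\ n\in\nset\}$, with at most $h(n)$ sets of diameter $<s_{\psi(n)}$ at stage $n$. We must injectively assign these sets indices $j$ with $j\leq\psi(n)$, so that $\diam<s_j$, while the set $J$ of used indices has upper density $0$.

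The plan is to place the $h(n)$ sets of stage $n$ at indices in a window just below $\psi(n)$, for instance $\{\psi(n)-i:i<h(n)\}$. To keep windows disjoint and $J$ thin, thin out $\psi$ first: pass to $\psi(n)\geq 2\psi(n-1)$ along a subsequence of stages, merging stages. Then choose $h$ so slowly growing that $\sum_{k\leq n}h(k)=o(\psi(n-1))$. For $N\in[\psi(n-1),\psi(n))$ this gives $\abs{J\cap N}\leq\sum_{k\leq n}h(k)$, so $\abs{J\cap N}/N\to0$. Reconciling the merging of stages with the slalom bound and with $\psi(n)/n\to\infty$ is the delicate bookkeeping step. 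If it fights back, I would instead run Lemma~\ref{problem1} backwards: define the final cover as $s\circ\chi$-fine for $\chi(j)=\psi(n)$ on the $j$-block of stage $n$, and check only that $\chi(j)/j\to\infty$.
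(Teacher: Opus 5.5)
\paragraph{The gap is the recombination step.}
The step that fails is the one you call routine: recombining the countably many classes in Step~2. Each class still carries its finitely many initial cover sets. Their diameters are bounded only by $s_{\varphi_\alpha(1)},s_{\varphi_\alpha(2)},\dots$, so they may be almost $s_1$. Countably many classes can therefore carry infinitely many such sets, while for every $c>0$ only finitely many indices $j$ have $s_j>c$. No interleaving can place them all in one $s$-fine sequence.

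This cannot be repaired $s$ by $s$, because $\JJ(\{s\})$ need not even be an ideal. Take $X=\Cset$ (compact and ultrametric) and $s=\geom$.
\begin{itemyze}
\item Each cylinder $\cyl p$ with $p\in 2^2$ has diameter $\frac14<s_1$, so it lies in $\JJ(\{\geom\})$ via $I=\{1\}$; a finite set has upper density $0$.
\item Yet $\Cset$ is not even $\geom$-dominated. Every $E\subs\Cset$ lies in a cylinder of diameter $\diam E$, so its coin-tossing outer measure is at most $\diam E$. A $\geom$-fine cover would then give $1\leq\sum_n\diam E_n<\sum_n2^{-n}=1$.
\end{itemyze}
In $\Rset$, three disjoint intervals of length $0.4$ do the same. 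So the statement is false for arbitrary $S\subs\SEQ$, and no argument that fixes a single $s$ can prove it. The same example refutes the later remark that $\JJ(S)$ is a \si ideal without completeness assumptions.

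\paragraph{The same hole in the paper, and the fix.}
The paper's proof has this hole in hidden form. After localizing, it asserts $\fmany n\ \exists(i,j)\in M_n\ x\in D_{i,j}$ for $x\in E_\alpha$. That needs every point to be covered at infinitely many stages, because localization and domination hold only from some $n_\alpha$ on. The argument is correct when $S$ is multiplicatively complete, e.g.\ Kwela's $S=\clos\geom$. The missing idea is to pass to $\lambda$-covers using several elements of $S$ at once, as in Lemma~\ref{basisDomi2}:
\begin{itemyze}
\item For $t\in S$ and each $k$, take a $t^{\mult{2^{k+1}}}$-fine cover of $E$ indexed by some $I_k$ with $\udens I_k=0$. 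Move its $n$-th set to the index $2^k(2n-1)\in\parti_k$; this is legitimate since $t_{2^k(2n-1)}\geq t_{2^{k+1}n}$.
\item Every point is then covered infinitely often.
\item The new index set $J$ still has upper density $0$, since $\abs{J\cap[1,m]}/m\leq\sum_{2^k\leq m}\abs{I_k\cap[1,m2^{-k-1}+1]}/m$. Each term tends to $0$ and is at most $2^{-k-1}+1/m$.
\end{itemyze}
With $\lambda$-covers the finitely many bad stages are simply discarded, no class splitting is needed, and your coding, localization and re-indexing go through.

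In the \si totally bounded case, handle the pieces $E_\alpha\cap X_j$ by enlarging the family to size $\kappa\cdot\omega$, not by invoking \si additivity afterwards.

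For the re-indexing, the paper's device avoids your thinning and stage-merging:
\begin{itemyze}
\item pick the common bound $\varphi$ with $\varphi(n)/n$ non-decreasing, so that $\varphi(n)-\varphi(n-1)\geq\varphi(n)/n$;
\item localize both $f_\alpha$ and $\varphi_\alpha$ by slaloms of width $g(n)\leq\sqrt[4]{\varphi(n-1)/n}$;
\item put the at most $g(n)^2$ stage-$n$ sets into the window $(\varphi(n-1),\varphi(n)]$.
\end{itemyze}
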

\begin{proof}
Let $s\in S$.
Either of the assumptions yields a countable family $\{D_{n,m}:n,m\in\Nset\}$
of sets such that 
\begin{align}
  &\forall m\ \forall n\ \diam D_{n,m}<s_m, \label{problem3a} \\
  &\forall m\ \forall E\subs X\ \diam E<s_m
  \implies \exists n\ E\subs D_{n,m}.  \label{problem3b}
\end{align}
Let $\mc C=\{\varphi\in\Pset:\frac{\varphi(n)}{n}\to\infty\text{ non-decreasing}\}$.

Suppose $\{E_\alpha:\alpha<\kappa\}\subs\JJ$, $\kappa<\add\NN$.
By Lemma~\ref{problem1} and \eqref{problem3a}, there are, for each $\alpha<\kappa$,
a function $f_\alpha\in\Pset$ and $\varphi_\alpha\in\mc C$ such that 
$\seq{D_{f_\alpha(n),\varphi_\alpha(n)}:n\in\Nset}$ is a cover of $E_\alpha$.
A straightforward argument shows that since $\kappa<\add\NN\leq\mathfrak b$, there is
$\varphi\in\mc C$ such that 
\begin{equation}\label{problem4}
  \forall\alpha<\kappa\ \fmany n\ \varphi_\alpha(n)\geq\varphi(n).
\end{equation}
Let $g\in\Pset$ be nondecreasing, $g(n)\leq\sqrt[4]{\frac{\varphi(n-1)}{n}}$. 
We will utilize the Bartoszy\'nski's~\cite[2.3.9]{MR1350295} characterization of $\add\NN$ in terms of
\emph{slaloms}: since $\kappa<\add\NN$, there are functions
$F,G:\Nset\to[\Nset]^{<\omega}$ such that 
$\forall n\ \abs{F(n)}\leq g(n)\land\abs{G(n)}\leq g(n)$ and
\begin{equation}\label{problem5}
  \forall\alpha<\kappa\ \fmany n\ f_\alpha(n)\in F(n)
  \land \varphi_\alpha(n)\in G(n)\land \varphi_\alpha(n)\geq\varphi(n),
\end{equation}
the rightmost inequality by \eqref{problem4}.
For each $n$ let 
\[
  M_n=\{(i,j)\in F(n)\times G(n):j\geq\varphi(n)\}.
\]
Let $\sigma:\nset\to\bigsqcup_n M_n$ be an enumeration such that the pairs in $M_n$
precede pairs in $M_{n+1}$ and write $A_m=D_{\sigma(m)}$. We claim that the sequence 
$\seq{A_m:m\in\nset}$ is a cover of $E=\bigcup_{\alpha<\kappa}E_\alpha$ that witnesses $E\in\JJ(s)$.

Let $x\in E$. Then there is $\alpha<\kappa$ such that $x\in E_\alpha$, 
hence by \eqref{problem5} 
\[
  \fmany n\ \exists (i,j)\in M_n\ x\in D_{i,j}
\]
and it follows that $\seq{A_m:m\in\nset}$ is a cover of $E$.

Fix $n\in\nset$. Note that $\abs{M_n}\leq g^2(n)$. 
Since $\frac{\varphi(n)}{n}$ is non-decreasing, we get
$\varphi(n)-\varphi(n-1)\geq\frac{\varphi(n)}{n}>g^2(n)\geq\abs{M_n}$.
Therefore, there are more than $\abs{M_n}$ integers between $\varphi(n-1)$ and
$\varphi(n)$. It follows that there is a strictly increasing function $\tau\in\Pset$
such that $\tau(M_n)\subs(\varphi(n-1),\varphi(n)]$ for all $n$.

If $\sigma(m)\in M_n$, then, by \eqref{problem5}, $\diam A_m<s_j$ where $j\geq\varphi(n)$, 
so $\diam A_m<s_{\varphi(n)}\leq s_{\tau(m)}$. Hence $\seq{A_m:m\in\nset}$
is an $s\comp\tau$-fine cover of $E$.

It remains to show that $\frac{\tau(m)}{m}\to\infty$. If $m\in M_n$, then
$m\leq g^2(1)+\dots g^2(n)\leq n g^2(n)$ and $\tau(m)\geq\varphi(n-1)$. Hence
\[
  \frac{\tau(m)}{m}\geq\frac{\varphi(n-1)}{ng^2(n)}
  =\sqrt{\frac{\varphi(n-1)}{n}}\to\infty.
\]
Since $s\in S$ was arbitrary, we have shown that $E\in\JJ(S)$.
\end{proof}
Some remarks are in order. Following~\cite{micro1} write $\phi\asymp s$ if $\phi$ is a gauge, $s\in\SEQ$
and $\exists a,b\ \forall n$ $\frac an\leq\phi(s_n)\leq\frac bn$.
\begin{itemyze}
\item The family $\JJ(S)$ is a \si ideal even when $S$ is not multiplicatively or additively complete. 
\item Obviously $\JJ(S)\subs\DD(\clos S)$.
\item By~\cite[Theorem 5.7]{micro1}, if $\phi\asymp s$, then $\NN(\hm^\phi)\subs\JJ(s)$ 
\item and, on the other hand, if $\phi\circ s\in\el1$, then by \eqref{b} $\JJ(s)\subs\NN(\hm^\phi)$.
\end{itemyze}

\subsection*{Uniformity and covering}
If $\II\subs\JJ$ are ideals, then $\non\II\leq\non\JJ$ and $\cov\II\geq\cov\JJ$.
The ideals of Hausdorff measure zero and of dominated sets are often in inclusion:
by Theorem~\ref{versus0} and \cite[Corollary 5.8]{micro1}, if $s\in\SEQ$ and $\phi$ is a gauge, then
\begin{align}
  &\text{if $\phi\asymp s$, then $\NN(\hm^\phi)\subs\DD(\closs)$,} \label{a}\\
  &\text{if $\psi\circ s\in\el1$, then $\DD(\closs)\subs\NN(\hm^\psi)$.}\label{b}
\end{align}
Therefore, if $\phi\asymp s$, $\psi\circ s\in\el1$ and $s\in S\subs\SEQ$,
then, by \eqref{a}, Definition~\ref{smzdef} and \eqref{b}, 
\begin{equation}\label{chain}
  \smz(X)=\DD(\SEQ)\subs\DD(S)\subs\DD(\closs)\subs\NN(\hm^\psi),
  \quad\NN(\hm^\phi)\subs\DD(\closs),
\end{equation}
and consequently
\begin{align}
  &\non\smz(X)\leq\non\DD(S)\leq\non\DD(\closs)\leq\non\NN(\hm^\psi), \label{non} \\
  &\non\NN(\hm^\phi)\leq\non\DD(\closs), \label{non2} \\
  &\cov\DD(S)\geq\cov\DD(\closs)\geq\cov\NN(\hm^\psi), \label{cov} \\
  &\cov\NN(\hm^\phi)\geq\cov\DD(\closs). \label{cov2}     
\end{align}
These inequalities immediately lead to estimates of the uniformity and covering of dominated sets.
\begin{thm}\label{card1}
Let $X$ be separable and $S\subs\SEQ$ multiplicatively complete. Suppose $X$ is not $S$-dominated.
\begin{enum}
\item $\non\DD(S)\geq\cov\MM$, 
\item $\non\DD(S)\geq\covs\MM$ if $X$ is \si totally bounded,
\item $\cov\DD(S)\leq\non\MM$ if $S$ is countably determined.
\end{enum}
\end{thm}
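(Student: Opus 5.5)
We will obtain all three parts by comparing $\DD(S)$ with strong measure zero sets and with meager sets, using the chain \eqref{chain} and the inequalities \eqref{non}, \eqref{cov}.

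\emph{Part (i).} By \eqref{non}, $\non\DD(S)\geq\non\smz(X)$. It therefore suffices to show $\non\smz(X)\geq\cov\MM$ for separable $X$, which is a standard argument. Let $E\subs X$ with $\abs E<\cov\MM$ and let $s\in\SEQ$. Fix a countable dense set $\{d_j\}$ and consider, in $\Pset$, the sets
\[
  N_x=\{f\in\Pset:\forall n\ d(x,d_{f(n)})\geq s_n/3\}, \qquad x\in E.
\]
Each $N_x$ is closed. It is nowhere dense: given a finite initial segment, we may extend it by some $j$ with $d(x,d_j)<s_n/3$. Since $\abs E<\cov\MM$, these sets do not cover $\Pset$, so some $f$ avoids all of them. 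Then the balls $B(d_{f(n)},s_n/3)$ form an $s$-fine cover of $E$. Hence $E$ is $\SEQ$-dominated, so $E\in\DD(S)$. The hypothesis that $X$ is not $S$-dominated only ensures that $\non\DD(S)$ is defined.

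\emph{Part (ii).} The plan is the same, but with $\covs\MM$. If $X$ is \si totally bounded, we may assume it is totally bounded, since the $S$-dominated sets form a \si ideal (this uses multiplicative completeness). For each $n$ choose a finite cover of $X$ by sets of diameter $<s_n$, of size $k_n$. Identify the choice of one set at each level with a point of $\prod_n k_n$, a compact group with coordinatewise addition mod $k_n$, which is homeomorphic to $\Cset$. Each $x$ determines a point $h_x$ in this product. The set $M=\{g:\forall n\ g(n)\neq0\}$ is closed nowhere dense. The point $g-h_x$ lies in $M$ exactly when no coordinate of $g$ selects a set containing $x$. If $\abs E<\covs\MM$, then $M+\{h_x:x\in E\}$ is not the whole group, so some $g$ hits every $h_x$ in some coordinate. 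That $g$ gives an $s$-fine cover of $E$. The main obstacle is that $\covs\MM$ is defined on $\Rset$ rather than on $\prod k_n$. I would transfer it using the known invariance of $\covs\MM$ across such products (cf.~\cite{invariants2}), or reduce to $\Cset$ by refining levels.

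\emph{Part (iii).} Since $S$ is countably determined, we may take $S$ countable, $S=\{s^j\}$. We need fewer than $\non\MM$ many $S$-dominated sets covering $X$. Fix a nonmeager set $F\subs\Pset$ with $\abs F=\non\MM$. For each $f\in F$, build a set $E_f$ dominated by every $s^j$, for instance
\[
  E_f=\bigcap_j\bigcup_n B\bigl(d_{f(\pi(j,n))},s^j_n/3\bigr),
\]
where $\pi$ is a bijection $\Nset\times\Nset\to\Nset$. For a fixed $x$, the set of $f$ with $x\notin E_f$ is meager, being a countable union of closed nowhere dense sets as in part (i). So $F$ cannot be contained in it, hence $x\in E_f$ for some $f\in F$. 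Thus $\{E_f:f\in F\}$ covers $X$, giving $\cov\DD(S)\leq\non\MM$. The delicate point is the coding through $\pi$, which must keep the sets $N$ nowhere dense simultaneously for all $j$.
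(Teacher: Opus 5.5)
Your proof is correct. Its main difference from the paper is that it is self-contained.

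\textbf{Parts (i) and (ii).} You make the same reduction as the paper, via \eqref{non} to $\non\smz(X)$. Where the paper simply cites \cite{invariants2} for $\non\smz(X)\geq\cov\MM$ and $\non\smz(X)\geq\covs\MM$, you prove these bounds directly. Part (i) is the standard argument and is complete.

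The transfer issue you flag in (ii) is real, but it closes easily:
\begin{itemize}
\item Pad the $n$-th finite cover with repetitions to size $2^{m_n}$.
\item Index it by $(\mathbb Z/2\mathbb Z)^{m_n}$ instead of $\mathbb Z/k_n\mathbb Z$.
\item The group is then $\prod_n(\mathbb Z/2\mathbb Z)^{m_n}\cong\Cset$ with addition mod $2$, and your set $M$ is still closed nowhere dense.
\item Finally, $\covs\MM$ computed in $\Cset$ agrees with the one in $\Rset$, since by Galvin--Mycielski--Solovay both equal $\non\smz(\Cset)=\non\smz(\Rset)$.
\end{itemize}
Only the direction ``$g-h_x\notin M$ implies $x$ lies in the selected set at some level'' is used. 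That direction is right even though a point may lie in several members of a cover.

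\textbf{Part (iii).} Here the routes genuinely differ. The paper goes through Hausdorff measures. It picks gauges $\phi_s\asymp s$ and a common $\phi\prec\phi_s$, and uses \eqref{a} to get $\NN(\hm^\phi)\subs\DD(S)$, hence $\hm^\phi(X)>0$. It then quotes $\cov\NN(\hm^\phi)\leq\non\MM$ from \cite{invariants2}.

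Your family $\{E_f:f\in F\}$ with $F$ nonmeager does the job directly. The step you call delicate is routine. For each fixed $j$, the set $\{f:\forall n\ d(x,d_{f(\pi(j,n))})>s^j_n/3\}$ is closed and nowhere dense on its own, because $n\mapsto\pi(j,n)$ is injective and so tends to infinity. Hence $\{f:x\notin E_f\}$ is meager as a countable union of such sets.

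\textbf{Comparison.} Your version avoids the gauge machinery. It also shows that (iii) needs neither multiplicative completeness nor the hypothesis that $X$ is not $S$-dominated, which is only there to make the cardinal nontrivial. It is essentially the argument underlying the cited bound. The paper's route is shorter given that machinery, and it fits the section's theme of obtaining estimates for $\DD(S)$ by comparison with Hausdorff-null ideals.
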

\begin{proof}
By~\cite{invariants2}, $\non\smz(X)\geq\cov\MM$ and $\non\smz(X)\geq\covs\MM$ if $X$ is \si totally bounded. 
Apply~\eqref{non} to get (i) and (ii).

(iii) We may assume $S$ is countable. For each $s\in S$ let $\phi_s\asymp s$ 
and let $\phi$ be a gauge such that $\phi\prec\phi_s$ for all $s\in S$.
Then \eqref{a} yields $\NN(\hm^\phi)\subs\DD(S)$ and, since $X$ is not $S$-dominated, also $\hm^\phi(X)>0$.
Therefore, $\cov\DD(S)\leq\cov\NN(\hm^\phi)$. By~\cite{invariants2}, $\cov\NN(\hm^\phi)\leq\non\MM$.
\end{proof}
%
%
\begin{thm}\label{card3}
Let $X$ be $\Rset$ or $\Cset$ and $S\subs\SEQ$ multiplicatively complete.
If $X$ is not $\DD(S)$-dominated, then
\begin{enum}
\item $\covs\MM\leq\non\DD(S)\leq\non\NN$,
\item and there is $s\in\SEQ$ such that $\non\DD(s)=\covs\MM$,
\item $\cov\NN\leq\cov\DD(S)$,
\item $\cov\DD(S)\leq\non\MM$ if $S$ is countably determined,
\item but it is relatively consistent that $\cov\DD(\SEQ)>\non\MM$.
\end{enum}
\end{thm}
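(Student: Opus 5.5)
The plan is to obtain each item by squeezing $\DD(S)$ between ideals whose invariants are already known, through the chain \eqref{chain} and the inequalities \eqref{non}--\eqref{cov2}. The hypothesis that $X$ is not $S$-dominated is what makes the invariants nontrivial.

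For (i), the lower bound $\covs\MM\leq\non\DD(S)$ is Theorem~\ref{card1}(ii), since $\Rset$ and $\Cset$ are \si totally bounded. For the upper bound, fix any $s\in S$ and use \eqref{b} with the gauge $\psi(r)=r$. Since $\psi\circ s=s$ need not be summable, I would pass to $s^{\mult k}$ or to a suitable subsequence, which is still in $S$ by multiplicative completeness, and pick a gauge $\psi$ with $\psi\circ s\in\el1$ and $\psi(r)\geq r$. This gives $\DD(S)\subs\NN(\hm^\psi)\subs\NN(\hm^1)=\NN$, so $\non\DD(S)\leq\non\NN$. An alternative route: every $\DD(S)$ set has Hausdorff dimension $0$ when $s$ is summable-fast, hence is Lebesgue null. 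I also need to double-check that the hypothesis ``not $S$-dominated'' is not used here, which seems to be the case.

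For (ii), I would choose $s$ decreasing extremely fast, e.g.\ $s_n=2^{-f(n)}$ with $f$ growing so fast that $\closs$-domination of a subset of $\Cset$ is equivalent to a combinatorial ``slalom'' condition characterizing $\covs\MM$. Bartoszy\'nski's characterization of $\non\smz$ via $\covs\MM$ in \cite{invariants2} should adapt: a set of size $<\covs\MM$ is $\SEQ$-dominated, hence $\DD(s)$. Conversely, I would build a set of size $\covs\MM$ that is not $s$-dominated, from a meager set $M$ with $M+Y=\Cset$, $\abs Y=\covs\MM$, choosing $s$ so that each meager set is coded by a single translate-cover. This step is the main obstacle. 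I would first try the known proof that $\non\smz=\covs\MM$ on $\Cset$, observing that the witnessing sequence there can be fixed in advance.

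For (iii), from \eqref{cov} $\cov\DD(S)\geq\cov\NN(\hm^\psi)$ with $\psi$ as in (i). By monotonicity this is at least $\cov\NN(\hm^1)=\cov\NN$, provided $\hm^\psi$-null sets are Lebesgue null; this holds when $\psi\geq$ Lebesgue gauge near $0$. Item (iv) is Theorem~\ref{card1}(iii). For (v), I would invoke a model (e.g.\ the Laver model, or a random real model) where $\cov\smz>\non\MM$. In such a model Borel Conjecture-type behaviour makes $\DD(\SEQ)$ consist of countable sets, so $\cov\DD(\SEQ)=\cont>\omega_1=\non\MM$. Concretely, in Laver's model BC holds and $\non\MM=\omega_1<\cont=\omega_2$, so $\cov\DD(\SEQ)=\cov([\Rset]^{\leq\omega})=\cont$ gives the claim.
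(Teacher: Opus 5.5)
The lower bound in (i) via Theorem~\ref{card1}(ii) and item (iv) via Theorem~\ref{card1}(iii) are what the paper does. The upper bound in (i), item (iii), item (ii) and item (v) all have genuine gaps.

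\emph{Items (i) and (iii).} You cannot start from an arbitrary $s\in S$. If $s\notin\el1$, then, since $s$ is decreasing, $s^{\mult k}\notin\el1$ for every $k$, so passing to $s^{\mult k}$ never produces summability. Multiplicative completeness only gives some $s'\in S$ below $s^{\mult k}$, with no control over whether $s'$ is summable. Also, a gauge with $\psi(r)\geq r$ and $\psi\circ s\in\el1$ exists only if $s\in\el1$ already.

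The missing step uses exactly the hypothesis you believed to be idle. Since $X\notin\DD(S)$, there is $s\in S$ such that $X$ has no $s$-fine cover. Such an $s$ must be summable, because for non-summable $s$ an $s$-fine cover of $X$ is easy to build: lay intervals of length $s_n/2$ end to end in both directions, or use dyadic cones in $\Cset$. For this $s$, \eqref{b} with $\psi(r)=r$ and multiplicative completeness give $\DD(S)\subs\DD(\closs)\subs\NN(\hm^1)=\NN$. This yields both $\non\DD(S)\leq\non\NN$ and $\cov\DD(S)\geq\cov\NN$.

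Without the hypothesis the conclusion really fails: for $S=\clos\harm$ every subset of $\Rset$ is $S$-dominated. Your alternative route through Hausdorff dimension runs into the same problem.

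\emph{Item (v) uses the wrong model.} In Laver's model the generic reals are dominating, so $\mathfrak b=\omega_2=\cont$. Then $\mathfrak b\leq\non\MM$ gives $\non\MM=\cont=\cov\DD(\SEQ)$, so your claim that $\non\MM=\omega_1$ there is false. The random model fails too, since $\cov\NN\leq\non\MM$ and $\cov\NN=\cont$ there. You need a model where $\non\MM$ is small but $\cov\smz$ equals $\cont$. The paper uses the iterated Sacks model: there every invariant in Cicho\'n's diagram is $\omega_1$, while $\cov\smz(\Cset)=\omega_2=\cont$ by \cite{Cardona2019}.

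\emph{Item (ii)} is left unresolved in your proposal. The uniformity you are after, a single $s$ fixed in advance for all meager sets, is unnecessary, because $s$ may be chosen after the witness set. A set is \smz{} iff it is $s$-dominated for every $s\in\SEQ$, so $\non\DD(\SEQ)=\min_{s\in\SEQ}\non\DD(s)$ and the minimum is attained. Explicitly, pick $Y\subs X$ that is not \smz{} with $\abs Y=\non\smz(X)=\covs\MM$, and pick $s$ such that $Y$ has no $s$-fine cover. Then $\covs\MM=\non\DD(\SEQ)\leq\non\DD(s)\leq\abs Y=\covs\MM$.
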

\begin{proof}
Since $X$ is not $\DD(S)$-dominated, there is $s$ such that $X$ is not $\DD(\closs)$-dominated.
Therefore $s\in\el1$, because otherwise $s^{\mult k}\notin\el1$ for all $k$ and thus $[0,1]$ would admit
an $s^{\mult k}$-fine cover. By \eqref{b}, $\DD(\closs)\subs\NN(\hm^1)=\NN$.
By~\cite{invariants2} $\non\NN(\hm^\phi)\leq\non\NN$ and $\cov\NN(\hm^\phi)\geq\cov\NN$ 
and (i) and (iii) follow.

(ii) Clearly $\non\DD(\SEQ)=\min_{s\in\SEQ}\non\DD(\closs)$, so (ii) follows from $\DD(\SEQ)=\smz(\Cset)$
and $\non\DD(\SEQ)=\non\smz(\Cset)=\covs\MM$, cf.~\cite[2.7]{MR1350295}.

(iv) is Theorem~\ref{card1}(iii).

(v) As shown in~\cite{Cardona2019}, $\cov\smz(\Cset)=\omega_2=\cont$ in the iterated Sacks
model, hence $\non\MM<\cov\smz(\Cset)$ holds in the model.
\end{proof}
We do not know if in ZFC there is $s\in\SEQ$ such that $\non\DD(\closs)=\non\NN$.
In $\Pset$ and Banach spaces, the situation is different.
\begin{thm}\label{card4}
If $X$ is an infinitely dimensional Banach space or $X=\Pset$
and $S\subs\SEQ$ is multiplicatively complete, then
\begin{enum}
\item $\non\DD(S)=\cov\MM$,
\item $\cov\DD(S)\geq\non\MM$, 
\item $\cov\DD(S)=\non\MM$ if $S$ is countably determined,
\item but it is relatively consistent that $\cov\DD(\SEQ)>\non\MM$.
\end{enum}
\end{thm}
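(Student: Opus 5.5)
The plan is to reduce everything to known estimates for strong measure zero and Hausdorff null ideals in these spaces, via the chains \eqref{non}--\eqref{cov2}, plus one direct combinatorial argument for the upper bound on uniformity. First we note that in both kinds of space $X$ is not $\closs$-dominated for any $s$: in $\Pset$ because it is not even \si totally bounded, and in an infinite-dimensional Banach space because it contains an infinite $1$-separated set in every ball. Hence the hypothesis of Theorem~\ref{card1} is met.

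For (i), the lower bound $\non\DD(S)\geq\cov\MM$ is Theorem~\ref{card1}(i). For the upper bound it suffices to fix a single $s\in\SEQ$ and find a set of size $\cov\MM$ that is not $s$-dominated, since $\DD(S)\subs\DD(s')$ for every $s'\in S$. In $\Pset$ we proceed as follows.
\begin{itemyze}
\item Choose $m_n\to\infty$ nondecreasing such that every set of diameter $<s_n$ lies in a cone of length $m_n$. Then a nonempty $E_n$ predicts the value $x(m_n-1)$ of all its points.
\item For each coordinate $k$, the pieces with $m_n-1=k$ give a finite ``slalom'' of predicted values at $k$.
\item Use Bartoszy\'nski's characterization of $\cov\MM$ via eventually different reals, in its slalom form with a prescribed finite width $h(k)$, to get $F\subs\Pset$ with $\abs F=\cov\MM$ such that every slalom of width $h$ is eventually avoided by some $f\in F$.
\end{itemyze}
Here $h(k)=\abs{\{n:m_n-1=k\}}$ is fixed by $s$ alone. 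Such an $f$ is not covered by $\{E_n\}$, except possibly by finitely many pieces. We remove those by a shift: replace $F$ by $F$ closed under changing finitely many values, which does not change the cardinality. I expect this to be the main obstacle, namely checking that the width-$h$ slalom version of $\cov\MM$ holds for arbitrary finite widths. What I would try first is to cite~\cite{invariants2}, where $\non\NN(\hm^\psi)=\cov\MM$ is proved for $\Pset$ and Banach spaces, and then conclude directly from \eqref{non} with $\psi\comp s\in\el1$.

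For an infinite-dimensional Banach space we transfer the $\Pset$ argument. By Riesz's lemma we build a tree of normalized, $\frac12$-separated vectors with geometrically shrinking scales, which gives a bi-Lipschitz copy of $\Cube(\Nset,\lambda)$. The argument above works verbatim after passing to $s^{\mult k}$ to absorb the constants, as in the proof of Theorem~\ref{tak}.

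For (ii) we use \eqref{cov} in the form $\cov\DD(S)\geq\cov\DD(\SEQ)=\cov\smz(X)$, together with the bound $\cov\smz(X)\geq\non\MM$ for $\Pset$ and Banach spaces from~\cite{invariants2}. This bound comes from the fact that a \smz{} subset of $\Pset$ is contained in a set avoided by a non-meager set of translates or branches. Item (iii) then follows by combining (ii) with Theorem~\ref{card1}(iii). For (iv) we take a $1$-Lipschitz surjection $\pi$: in $\Pset$, the parity map $\pi(x)(n)=x(n)\bmod 2$ onto $\Cset$; in a Banach space, a norm-one functional followed by truncation onto $[0,1]$. Lipschitz images of \smz{} sets are \smz, so a cover of $X$ by \smz{} sets projects to such a cover of $\Cset$ or $[0,1]$. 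Hence $\cov\DD_X(\SEQ)\geq\cov\smz(\Cset)$, and the latter exceeds $\non\MM$ in the iterated Sacks model by~\cite{Cardona2019}, exactly as in Theorem~\ref{card3}(v).
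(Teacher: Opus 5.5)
Your argument for (ii) has the monotonicity of $\cov$ backwards. Since $\DD(\SEQ)\subs\DD(S)$, every cover of $X$ by \smz{} sets is already a cover by $S$-dominated sets. Hence $\cov\DD(S)\leq\cov\DD(\SEQ)$, not $\geq$, and a lower bound on $\cov\smz(X)$ gives nothing for $\cov\DD(S)$. The statement itself rules out your inequality. Take $S$ countably determined, e.g. $S=\closs$. Then (iii) and (iv) together say that consistently $\cov\DD(S)=\non\MM<\cov\DD(\SEQ)$.

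You used the smaller ideal $\smz$ correctly for the \emph{lower} bound on $\non$ in (i), because $\non$ increases with the ideal. But $\cov$ decreases with the ideal, so lower bounds for $\cov\DD(S)$ must come from an ideal \emph{containing} $\DD(S)$. The missing step is as follows. Fix $s\in S$ and a gauge $\psi$ with $\psi\comp s\in\el1$. By \eqref{b}, $\DD(S)\subs\DD(\closs)\subs\NN(\hm^\psi)$, so $\cov\DD(S)\geq\cov\NN(\hm^\psi)$. The key input is $\cov\NN(\hm^\psi)=\non\MM$ for $\Pset$ and for non-locally compact Polish groups with an invariant metric, from \cite{invariants2}. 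This is how the paper obtains (ii). Your (iii) is derived from (ii), so it inherits the gap until this is fixed.

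The rest is sound.
\begin{itemyze}
\item \emph{Upper bound in (i).} Your slalom argument works. The width-$h$ version of Bartoszy\'nski's characterization of $\cov\MM$ follows by coding blocks of $h(k)$ coordinates into a single value. You must, however, pick $s\in S$ (not just $s\in\SEQ$) with $s_1<1$, which multiplicative completeness allows, so that every $m_n\geq1$. Your fallback of citing $\non\NN(\hm^\psi)=\cov\MM$ together with \eqref{non} is exactly the paper's route.
\item \emph{Item (iv).} Your $1$-Lipschitz projection argument correctly transfers Cardona's result from $\Cset$ or $[0,1]$ to $X$, a step the paper leaves implicit.
\item \emph{A wrong justification.} The fact that $\Pset$ is not \si totally bounded does not show it is not $\closs$-dominated. $\closs$-dominated subsets of $\Pset$ can be dense $G_\delta$: apply Lemma~\ref{basisDomi2} with small balls around a dense set. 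Such sets are not \si totally bounded. The claim itself is true, but prove it by your diagonal/slalom argument instead.
\end{itemyze}
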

\begin{proof}
Let $s\in S$ and let $\phi$ be a gauge such that $\phi\circ s\in\el1$.
By Definition~\ref{smzdef} and \eqref{b}, 
\begin{equation}\label{chain2}
  \smz(X)=\DD(\SEQ)\subs\DD(S)\subs\DD(\closs)\subs\NN(\hm^\phi).   
\end{equation}
By~\cite{invariants2}, if $X=\Pset$ or $X$ is a Polish, not locally compact group 
with an invariant metric, then $\non\NN(\hm^\phi)=\cov\MM$ and $\cov\NN(\hm^\phi)=\non\MM$.
This obviously applies when $X$ is an infinitely dimensional Banach space.
Also by~\cite{invariants2}, $\non\smz(X)=\cov\MM$, hence~\eqref{chain2} yields
\begin{align*}
  &\cov\MM=\non\DD(\SEQ)
  \leq\non\DD(S)\leq\non\DD(\closs)\leq\non\NN(\hm^\phi)=\cov\MM,  \\
  &\cov\DD(S)\geq\cov\DD(\closs)\geq\cov\NN(\hm^\phi)=\non\MM      
\end{align*}
which proves (i) and (ii). (iii) follows from (ii) and Theorem~\ref{card1}(iii).
(iv) is proved the same way as Theorem~\ref{card3}(v).
%
\end{proof}
\subsection*{Microscopic sets}
We gather information about cardinal invariants of microscopic sets, answering a few questions
posed in~\cite{MR3823475}.
\begin{thm}
\begin{enum}
\item \emph{(\cite{MR3482702})} $\add\micro(\Rset)=\omega_1$,
\item $\cof\micro(\Rset)=\cont$,
\item \emph{(\cite{MR3823475})} $\covs\MM\leq\non\micro(\Rset)\leq\non\NN$,
\item $\non\micro(\Rset)<\non\NN$ is relatively consistent,
\item $\cov\NN\leq\cov\micro(\Rset)\leq\non\MM$,
\item $\cov\NN<\cov\micro(\Rset)$ is relatively consistent,
\item $\add\MM>\cov\micro(\Rset)$ is relatively consistent.
\end{enum}
\end{thm}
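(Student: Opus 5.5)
The plan is to specialise the general results of this paper to $S=\clos\geom$ and $X=\Rset$. Items (i), (ii), (iii) and (v) then follow directly from earlier results. The three consistency items (iv), (vi), (vii) need a bound through a Hausdorff null ideal, combined with a model taken from the literature.

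First, the setup. By definition $\micro(\Rset)=\DD_\Rset(\clos\geom)$. The set $\clos\geom$ is multiplicatively complete and countably determined, and $\geom$ is subgeometric. The line $\Rset$ is complete and strongly shell porous at no point, so it is an onion space. Moreover $\Rset$ is not microscopic, since microscopic sets are Lebesgue null.

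For (i) and (ii) I apply Corollary~\ref{addi}(iii). It gives $\add\micro(\Rset)=\omega_1$, which recovers Kwela's theorem, and $\cof\micro(\Rset)=\cont$. Item (iii) is Theorem~\ref{card3}(i) with $S=\clos\geom$. Item (v) follows from Theorem~\ref{card3}(iii) and (iv); part (iv) of that theorem applies because $\clos\geom$ is countably determined.

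For the consistency statements I sandwich $\micro(\Rset)$ between Hausdorff null ideals, using \eqref{a} and \eqref{b}. For an upper bound on the ideal, take $\psi(r)=\micr^2(r)=\log^{-2}(1/r)$. Then $\psi\circ\geom\in\el1$, so $\micro(\Rset)\subs\NN(\hm^{\psi})$. This yields $\non\micro(\Rset)\leq\non\NN(\hm^\psi)$ and $\cov\micro(\Rset)\geq\cov\NN(\hm^\psi)$. For a lower bound on the ideal, take $\phi=\micr$. Then $\phi\asymp\geom$, so $\NN(\hm^\micr)\subs\micro(\Rset)$. This yields $\cov\micro(\Rset)\leq\cov\NN(\hm^\micr)$.

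The three consistency items then reduce to consistency results for the invariants of $\NN(\hm^\psi)$ and $\NN(\hm^\micr)$ for these very thin gauges. I would take those results from \cite{invariants2}, where they are computed in terms of Cicho\'n-diagram and slalom-type invariants.
\begin{itemyze}
\item For (iv) I need a model with $\non\NN(\hm^\psi)<\non\NN$.
\item For (vi) I need a model with $\cov\NN<\cov\NN(\hm^\psi)$. The Cohen model is the first candidate, since there $\cov\NN=\omega_1$ while $\cov\MM=\cont$.
\item For (vii) I need a model with $\cov\NN(\hm^\micr)<\add\MM$. Here I would look for a ccc iteration that increases $\add\MM$, for instance a Hechler iteration, and that keeps the Hausdorff-null covering small.
\end{itemyze}

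The main obstacle is to find and justify the right models for (iv), (vi) and (vii). Each requires knowing how the uniformity or covering number of $\NN(\hm^\phi)$, for a gauge $\phi$ far below every power $r^t$, behaves under the relevant forcing. In particular, I am not certain which model gives (iv). My first attempt would be to express $\non\NN(\hm^\psi)$ through a localization (slalom) cardinal as in \cite{invariants2}, and then pick a model separating that cardinal from $\non\NN$.
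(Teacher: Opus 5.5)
Your treatment of (i), (ii), (iii) and (v) is the paper's own: Corollary~\ref{addi}, and Theorem~\ref{card3}(i),(iii),(iv), the last resting on Theorem~\ref{card1}(iii). Your sandwich $\NN(\hm^{\micr})\subs\micro(\Rset)\subs\NN(\hm^{\micr^2})$ is also correct. The three consistency items, however, are not proved. You reduce them to consistency statements and then give no model for (iv), an unsupported guess for (vii), and a model for (vi) that provably fails.

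On (vi): in the Cohen model $\non\MM=\aleph_1$, so item (v), which you have just established, gives $\cov\micro(\Rset)=\aleph_1=\cov\NN$ there. Your heuristic via $\cov\MM=\cont$ would need $\NN(\hm^\psi)\subs\MM$, and that is false. These ideals contain dense $G_\delta$ sets (Remarks~\ref{porous13}(iii)), which is exactly why $\cov\NN(\hm^\psi)\leq\non\MM$. You need a model with $\non\MM$ large. The paper uses the Elekes--Stepr\=ans model \cite{MR3910435} of $\cov\NN=\non\NN=\mathfrak d<\cov\NN(\hm^\phi)$ with $\phi(r)=r^{1/2}$. For (iv) it uses the Shelah--Stepr\=ans model \cite{MR2128705} of $\non\NN(\hm^\phi)<\non\NN$ for the same $\phi$.

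Your description of the gauges is also backwards. $\micr$ and $\micr^2$ are not thin gauges below all powers; near $0$ they lie above every $r^t$. In particular $\micr^2(r)\geq r^{1/2}$ for small $r$, so $\NN(\hm^{\micr^2})\subs\NN(\hm^\phi)$. Your upper bound is therefore sharper than the paper's, which uses $\phi\circ\geom\in\el1$ directly. The power-gauge models above transfer to your bounds by monotonicity, since $\non\NN(\hm^{\micr^2})\leq\non\NN(\hm^\phi)$ and $\cov\NN(\hm^{\micr^2})\geq\cov\NN(\hm^\phi)$. No new slalom computation is needed.

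On (vii): the missing observation is $\smz(\Rset)\subs\micro(\Rset)$. This gives $\cov\micro(\Rset)\leq\cov\smz(\Rset)$, and Pawlikowski \cite{Pawlikowski1990} has a model of $\cov\smz(\Rset)<\add\MM$. Your reduction to $\cov\NN(\hm^\micr)<\add\MM$ is valid, and it also follows from Pawlikowski's model because $\smz\subs\NN(\hm^\micr)$. As written, though, ``a Hechler iteration that keeps the Hausdorff-null covering small'' gives no argument that any particular forcing keeps $\cov\NN(\hm^\micr)$ at $\aleph_1$.
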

\begin{proof}
(i) and (ii) follow from Corollary~\ref{addi}. 
(iii) and (v) follow from Theorems~\ref{card1} and \ref{card3}.

Let $\phi(r)=r^{1/2}$.  Since $\phi\circ\geom\in\el1$, 
\eqref{b} yields $\micro(\Rset)\subs\NN(\hm^\phi)$.

(iv) Shelah and Stepr\=ans~\cite{MR2128705} have a model of 
$\non\NN(\hm^\phi)<\non\NN$.

(vi) Elekes and Stepr\=ans~\cite{MR3910435} have a model of 
$\cov\NN=\non\NN=\mathfrak d<\cov\NN(\hm^\phi)$.

(vii) Pawlikowski~\cite{Pawlikowski1990} has a model of $\cov\smz(\Rset)<\add\MM$.
Since $\smz(\Rset)\subs\micro(\Rset)$, (vii) follows.
\end{proof}
So (v) is the best we can get in ZFC. However, it is unclear if (iii) can be improved:
\begin{question}
Is $\non\micro(\Rset)>\covs\MM$ relatively consistent?
\end{question}
Microscopic sets, have a distinctive feature --- they are preserved by H\"older mappings.
This feature lets us prove that the uniformity and covering of microscopic sets 
remains fixed across various spaces.
\begin{lem}\label{holder}
If $f:X\to Y$ is H\"older and $E\subs X$ is microscopic, then so is $f(E)$.
\end{lem}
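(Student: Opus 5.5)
Throughout, I take ``$f$ is H\"older'' to mean that there are constants $C>0$ and $\alpha\in(0,1]$ such that $d(f(x),f(y))\leq C\,d(x,y)^\alpha$ for all $x,y\in X$. Then $\diam f(A)\leq C(\diam A)^\alpha$ for every $A\subs X$. The proof is a direct transfer of covers: an $\eps^n$-fine cover of $E$ is pushed forward by $f$, and the geometric rate of decay is absorbed by changing the base of the geometric sequence.

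By the definition, $E$ is microscopic iff it is $\clos\geom$-dominated. Since $\clos\geom=\{\geom^{\mult k}:k\in\nset\}$ and $\geom^{\mult k}=\seq{2^{-kn}}$, this amounts to the following: for every $k\in\nset$ there is a cover $\seq{E_n:n\in\nset}$ of $E$ with $\diam E_n<2^{-kn}$.

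So fix $k\in\nset$; the task is a $\seq{2^{-kn}}$-fine cover of $f(E)$.

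1. Choose $m\in\nset$ with $m\alpha> k$. Since $m\alpha-k>0$, the inequality $C\,2^{-m\alpha n}<2^{-kn}$ holds for all $n\geq N$, for some $N$.
2. Because $E$ is microscopic, there is a $\seq{2^{-mn}}$-fine cover $\seq{E_n}$ of $E$.
3. Then $\diam f(E_n)\leq C\,2^{-m\alpha n}<2^{-kn}$ for every $n\geq N$.

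Step 3 does not control the first $N-1$ sets, and fixing them is the only genuine obstacle. The cleanest remedy is a reindexing. Use $s=\geom^{\mult k}$ and $u=\geom^{\mult{m'}}$ with $m'$ large. For $n\geq 1$ the index $n+N$ satisfies $n+N\geq N$, so
\[
  C\,2^{-m'\alpha n}\leq 2^{-k(n+N)}
\]
holds for all $n\geq1$ once $m'\alpha\geq k(N+1)+\log_2 C$, since then the exponents compare correctly termwise. The danger is circularity, because $N$ was chosen depending on $m$. Rather than fixing $N$ first, it is better to choose $m'$ directly so that $C\,2^{-m'\alpha n}<2^{-k(n+N_0)}$ for all $n\geq1$ with a prescribed shift $N_0$. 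This works because $C\,2^{-m'\alpha n}\leq C\,2^{-k n}2^{-(m'\alpha-k)}$ and $m'\alpha-k$ can be made as large as needed.

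Concretely, take $N_0=0$ and choose $m'$ with $m'\alpha\geq k$ and $2^{-(m'\alpha-k)}C<1$. Then for all $n\geq1$,
\[
  C\,2^{-m'\alpha n}=C\,2^{-kn}\,2^{-(m'\alpha-k)n}\leq C\,2^{-(m'\alpha-k)}\,2^{-kn}<2^{-kn}.
\]
So no shift is needed. With this choice of $m'$ in place of $m$ in step 2, the sets $f(E_n)$ satisfy $\diam f(E_n)<2^{-kn}$ for all $n\geq1$. Hence $\seq{f(E_n)}$ is a $\geom^{\mult k}$-fine cover of $f(E)$.

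Since $k$ was arbitrary, $f(E)$ is $\clos\geom$-dominated, that is, microscopic.
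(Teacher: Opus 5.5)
Your proof is correct and is essentially the paper's argument: pick the multiplier $m'$ large enough, depending on $k$, $\alpha$ and $C$, that $C\,2^{-m'\alpha n}<2^{-kn}$ for every $n\geq1$, and push forward a $\geom^{\mult{m'}}$-fine cover of $E$ to get a $\geom^{\mult k}$-fine cover of $f(E)$. The earlier detour through a threshold $N$, reindexing and shifts is unnecessary and can be deleted, since your final choice of $m'$ already handles all indices at once.
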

\begin{proof}
Suppose $d(f(x),f(y))\leq C(d(x,y)^\beta$. Fix $k\in\nset$ and choose $j\geq \frac k\beta+\log_2L$.
The choice guarantees that if $\diam A<2^{jn}$, then $\diam f(A)<2^{kn}$. Therefore, the $f$-images
of a $s^{\mult j}$-fine cover of $E$ form a $s^{\mult k}$-fine cover of $f(E)$.
\end{proof}
\begin{thm}
Let $X$ be an analytic, doubling space with $\hdim X>0$.
Then 
\begin{enum}
\item $\non\micro(X)=\non\micro(\Rset)$,
\item $\cov\micro(X)=\cov\micro(\Rset)$.
\end{enum}
\end{thm}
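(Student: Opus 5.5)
The approach is to transfer microscopic sets in both directions between $X$ and $\Rset$ using Lemma~\ref{holder}, together with two maps: a H\"older surjection from a subset of $X$ onto a large piece of $\Rset$, and a H\"older map from $X$ into $\Rset$ (or from $\Rset$ into $X$) with small fibres.

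\textbf{Maps out of $X$.} Since $X$ is doubling, Assouad's embedding theorem gives, for each $\beta\in(0,1)$, a bi-Lipschitz embedding of the snowflake $(X,d^\beta)$ into some $\Rset^N$. Hence there is an injective map $g:X\to\Rset^N$ that is $\beta$-H\"older with $\beta$-H\"older inverse on $g(X)$. Next, Peano-type curves give a H\"older surjection $[0,1]\to[0,1]^N$, and the inverse direction is handled by a H\"older injection of $\Cset$ into $\Rset$ composed with a bi-Lipschitz map $\Cube(2,2^{-N})\to\Cset$ coordinatewise. Because microscopic sets are preserved under H\"older maps (Lemma~\ref{holder}), and preimages under maps with H\"older inverse also preserve them, we reduce the problem to comparing $\micro(X)$ with $\micro(\Rset^N)$ and with $\micro(\Rset)$.

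\textbf{Maps into $X$.} Since $\hdim X>0$ and $X$ is analytic, the Howroyd/Joyce–Preiss theorem yields a compact $K\subs X$ with $0<\hm^t(K)<\infty$ for some $t>0$. By a Frostman-type argument, or by Mattila–Mauldin's construction in doubling spaces, $K$ contains a bi-H\"older copy of a Cantor cube $\Cube(2,\lambda)$. That cube is bi-H\"older equivalent to $\Cset$, and $\Cset$ admits a Lipschitz surjection onto $[0,1]$ (binary expansion) as well as a H\"older injection into $\Rset$ with H\"older inverse, the latter for the ultrametric $2^{-n}$ realised via the ternary Cantor set.

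\textbf{Inequalities for $\non$.} Let $E\subs\Rset$ be non-microscopic with $|E|=\non\micro(\Rset)$. Intersect with some $[k,k+1]$, which is still non-microscopic by $\sigma$-additivity. Lift it to $\Cset$ by choosing preimages under the binary map; the lift is non-microscopic because the map is Lipschitz. Push the lift into $K\subs X$ via the bi-H\"older cube embedding; its image is still non-microscopic since the inverse is H\"older. Therefore $\non\micro(X)\leq\non\micro(\Rset)$. Conversely, if $E\subs X$ is non-microscopic, then $g(E)\subs\Rset^N$ is non-microscopic. Using a H\"older injection of $\Rset^N$ (countable union of cubes) into $\Rset$ with H\"older inverse on the image, for example through $\Cset^N\cong\Cset$ and digit interleaving applied to a Cantor-type parametrisation, we get $\non\micro(\Rset)\leq\non\micro(X)$.

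\textbf{Inequalities for $\cov$.} Covering families are handled dually. Preimages of microscopic sets under a map with a H\"older inverse on its range, together with H\"older surjections, transfer covers. For instance, a H\"older surjection $h:\Cset\to$ (a $\sigma$-compact piece containing) $g(X)$ would let us pull back covers. The main obstacle is to get \emph{surjective} H\"older maps onto all of $X$, or injective ones covering all of $\Rset$. The way around it is to use that $\cov$ only requires covering: $X$ covers itself via $g$, and $\Rset^N$ is a H\"older image of $\Rset$ (Peano). So a cover of $\Rset$ by $\kappa$ microscopic sets pushes forward to a cover of $\Rset^N$, whose $g$-preimages cover $X$; this gives $\cov\micro(X)\leq\cov\micro(\Rset)$. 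For the reverse inequality, I would cover $X\supseteq K\cong\Cube(2,\lambda)$, pull the cover back to $\Cset$, and push it forward by the Lipschitz surjection onto $[0,1]$, which yields $\cov\micro(\Rset)\leq\cov\micro(X)$. I expect the delicate checks to be the precise H\"older bi-embeddings of Cantor cubes inside $K$.
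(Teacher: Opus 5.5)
Your overall architecture is the paper's: snowflake plus Assouad to get a bi-H\"older copy of $X$ in some $\Rset^N$, a Peano curve, and Lemma~\ref{holder} used in both directions. Your argument for $\cov\micro(X)\leq\cov\micro(\Rset)$ is correct. The step $\non\micro(\Rset)\leq\non\micro(X)$ fails as written, however. It invokes a H\"older injection of $\Rset^N$ into $\Rset$ with H\"older inverse on its image, and for $N\geq2$ no such map exists. There is not even a continuous injection of $[0,1]^2$ into $\Rset$: its image would be a nondegenerate interval homeomorphic to the square, yet removing an interior point disconnects the interval but not the square. (Also, $\Cube(2,2^{-N})$ and $\Cset$ are only bi-H\"older equivalent, not bi-Lipschitz.) The repair is the preimage trick you already use elsewhere. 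By $\sigma$-additivity some $F=g(E)\cap Q$, with $Q$ a unit cube, is non-microscopic; translate it into $[0,1]^N$. Then choose $F'\subs[0,1]$ with $p(F')=F$ and $\abs{F'}\leq\abs F$, where $p$ is the Peano curve. $F'$ is non-microscopic because $p$ is H\"older. This is exactly how the paper closes its chain of inequalities.

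The more substantial gap is the other direction, $\non\micro(X)\leq\non\micro(\Rset)$ and $\cov\micro(\Rset)\leq\cov\micro(X)$. It rests entirely on the claim that a compact $K\subs X$ with $0<\hm^t(K)<\infty$ contains a bi-H\"older copy of $\Cube(2,\lambda)$, which you assert with only a vague attribution. This is not a routine Frostman consequence. A Frostman measure bounds $\mu(B(x,r))$ only from above, so the mass of a ball of radius $r$ may sit inside a sub-ball of radius $r^{A}$. A naive splitting then produces consecutive splitting scales with unbounded ratios, whereas a bi-H\"older Cantor cube needs, uniformly along every branch, roughly $\log(1/r)$ well-separated splits above scale $r$. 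The claim is in fact true (e.g.\ by a pruning argument on the tree of an ultrametric skeleton), but that is the real content of this direction and it is missing. The paper sidesteps it: choosing $\eps$ with $\hdim(X,d^\eps)=\hdim X/\eps>1$, the Keleti--M\'ath\'e--Zindulka theorem yields a H\"older surjection $h:X\to[0,1]$. Note that your two inequalities only use that the cube embedding $\iota$ has H\"older \emph{inverse}. Any selection $\iota:[0,1]\to X$ with $h\comp\iota=\mathrm{id}$ has this property, so with that citation in place of the Cantor-cube claim your argument goes through.
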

\begin{proof}
By~\cite[Lemma 2.8]{micro1} $\non\micro([0,1])=\non\micro(\Cset)$ and $\cov\micro([0,1])=\cov\micro(\Cset)$.
We will thus work with $[0,1]$ in place of $\Cset$.
Choose $\eps\in(0,1)$ such that the snowflaked metric space $(X,d^\eps)$ satisfies $\hdim(X,d^\eps)>1$. 
Since $X$ is doubling, by the Assouad Embedding Theorem, there is $m\in\nset$ 
and a bilipschitz mapping $f:(X,d^\eps)\hookrightarrow\Rset^m$. The very same mapping
$f:(X,d)\hookrightarrow\Rset^m$ is a bih\"older embedding.
By Lemma~\ref{holder}, microscopic sets in $X$ and in $f(X)$ are the same.
Therefore, we may and will suppose that $X$ is a subset of a Euclidean space with $\hdim X>1$.

Since $X$ is analytic and $\hdim X>1$, by~\cite{KeletiMatheZindulka2014} there is a surjective 
H\"older mapping $g:X\to [0,1]$. Apply Lemma~\ref{holder} again to conclude that $\non\micro(X)\leq\non\micro([0,1])$ and $\cov\micro(X)\geq\cov\micro([0,1])$.

Since being microscopic is \si additive, we may suppose that $X$ is contained in $[0,1]^m$.
Therefore $\non\micro([0,1]^m)\leq\non\micro(X)$ and $\cov\micro([0,1]^m)\geq\cov\micro(X)$.

Consider the Peano curve $p:[0,1]\to[0,1]^m$. It is H\"older, hence 
$\non\micro([0,1])\leq\non\micro([0,1]^m)$ and $\cov\micro([0,1])\geq\cov\micro([0,1]^m)$.

The three pairs of inequalities yield the theorem.
\end{proof}
Note that unlike on $\Rset$, the two cardinal invariants are fully determined on 
$\Pset$ and Banach spaces, as immediately follows from Theorem~\ref{card4}: for any such space $X$,
$\non\micro(X)=\cov\MM$ and $\cov\micro(X)=\non\MM$.

%
\section*{Acknowledgments}
We would like to thank our colleagues Michael Hru\v s\'ak for providing 
excellent support and research milieu and Arturo Antonio Martínez-Celis Rodriguez
for providing the first insight into the failure of Galvin-Mycielski-Solovay type theorem
for microscopic and porous sets.

\bibliographystyle{amsplain}
\bibliography{micro_list,micro_extra}
\end{document}